\newdimen\bibspace
\renewenvironment{thebibliography}[1]{%
 \section*{\refname 
       \@mkboth{\MakeUppercase\refname}{\MakeUppercase\refname}}%
     \list{\@biblabel{\@arabic\c@enumiv}}%
          {\settowidth\labelwidth{\@biblabel{#1}}%
           \leftmargin\labelwidth
           \advance\leftmargin\labelsep
           \itemsep\bibspace
           \parsep\z@skip     %
           \@openbib@code
           \usecounter{enumiv}%
           \let\p@enumiv\@empty
           \renewcommand\theenumiv{\@arabic\c@enumiv}}%
     \sloppy\clubpenalty4000\widowpenalty4000%
     \sfcode`\.\@m}
    {\def\@noitemerr
      {\@latex@warning{Empty `thebibliography' environment}}%
     \endlist}
\newtheorem{thm}{Theorem}[section]
\newtheorem{lem}{Lemma}[section]
\newtheorem{prop}{Proposition}[section]
\newtheorem{defn}{Definition}[section]
\newtheorem{cor}{Corollary}[section]
\numberwithin{equation}{section}
\def\Xint#1{\mathchoice
  {\XXint\displaystyle\textstyle{#1}}%
  {\XXint\textstyle\scriptstyle{#1}}%
  {\XXint\scriptstyle\scriptscriptstyle{#1}}%
  {\XXint\scriptscriptstyle\scriptscriptstyle{#1}}%
  \!\int}
\def\XXint#1#2#3{{\setbox0=\hbox{$#1{#2#3}{\int}$}
  \vcenter{\hbox{$#2#3$}}\kern-.5\wd0}}
\def\dashint{\Xint-}
\newcommand{\al}{\alpha}                \newcommand{\lda}{\lambda}
                \newcommand{\pa}{\partial}
\newcommand{\va}{\varepsilon}           \newcommand{\ud}{\mathrm{d}}
\newcommand{\be}{\begin{equation}}      \newcommand{\ee}{\end{equation}}
\newcommand{\Lda}{\Lambda}              
\newcommand{\R}{\mathbb{R}}              \newcommand{\Sn}{\mathbb{S}^n}
  \newcommand{\M}{\mathscr{M}}
\newcommand{\dsum}{\displaystyle\sum}
\begin{document}

\title{\textbf{On a fractional Nirenberg problem, part II: existence  of solutions}
\bigskip}

\author{\medskip Tianling Jin, \ \
YanYan Li, \ \
Jingang Xiong}

\date{\today}

\maketitle

\begin{abstract} This paper is a continuation of our earlier work
``[T. Jin, Y.Y. Li and J. Xiong, On a fractional Nirenberg problem, part I: blow up analysis and compactness of solutions, to appear in
J. Eur. Math. Soc.]", where compactness  results were given on a fractional Nirenberg problem.
We prove two existence results stated there.  We also obtain a fractional Aubin inequality.
\end{abstract}

\section{Introduction}
\medskip

\noindent

Let $(\Sn,g_{\Sn})$, $n\geq 2$, be the standard sphere in $\R^{n+1}$. The fractional Nirenberg problem studied in \cite{JLX} is equivalent to solving:
\be\label{main equ}
P_\sigma(v)=c(n,\sigma)K v^{\frac{n+2\sigma}{n-2\sigma}}\quad \mbox{on } \Sn,
\ee
where $\sigma\in(0,1)$ is a constant,  $K$ is a continuous function on $\Sn$,
\be\label{P sigma}
 P_\sigma=\frac{\Gamma(B+\frac{1}{2}+\sigma)}{\Gamma(B+\frac{1}{2}-\sigma)},\quad B=\sqrt{-\Delta_{g_{\Sn}}+\left(\frac{n-1}{2}\right)^2},
\ee
$c(n,\sigma)=\Gamma(\frac{n}{2}+\sigma)/\Gamma(\frac{n}{2}-\sigma)$, $\Gamma$ is the Gamma function and $\Delta_{g_{\Sn}}$ is the Laplace-Beltrami operator on $(\Sn, g_{\Sn})$. See \cite{Br}.
The operator $P_{\sigma}$ can be seen more concretely on $\R^n$ using stereographic projection.
The stereographic projection from $\Sn\backslash \{N\}$ to $\R^n$ is the inverse of
\[
F: \R^n\to \Sn\setminus\{N\}, \quad y\mapsto \left(\frac{2y}{1+|y|^2}, \frac{|y|^2-1}{|y|^2+1}\right),
\]
where $N$ is the north pole of $\Sn$. Then
\[
(P_\sigma(\phi))\circ F=  |J_F|^{-\frac{n+2\sigma}{2n}}(-\Delta)^\sigma(|J_F|^{\frac{n-2\sigma}{2n}}(\phi\circ F))\quad \mbox{for }\phi\in C^2(\Sn),
\]
where
\[
|J_F|=\left(\frac{2}{1+|y|^2}\right)^n,
\]
and $(-\Delta)^\sigma$ is the fractional Laplacian operator (see, e.g., page 117 of \cite{S}).
When $\sigma\in (0,1)$, Pavlov and Samko \cite{PS} showed that
\be\label{description of P sigma}
P_{\sigma}(v)(\xi)=P_{\sigma}(1)v(\xi)+c_{n,-\sigma}\int_{\Sn}\frac{v(\xi)-v(\zeta)}{|\xi-\zeta|^{n+2\sigma}}\,\ud vol_{g_{\Sn}}(\zeta)
\ee
for $v\in C^{2}(\Sn)$, where $c_{n,-\sigma}=\frac{2^{2\sigma}\sigma\Gamma(\frac{n+2\sigma}{2})}{\pi^{\frac{n}{2}}\Gamma(1-\sigma)}$ and $\int_{\Sn}$ is understood as $\lim\limits_{\va\to 0}\int_{|x-y|>\va}$.

 When $K=1$,
\eqref{main equ} is the
Euler-Lagrange equation for a functional associated to the following sharp Sobolev inequality on $\Sn$ (see \cite{Be})
\be\label{pe1}
\left(\dashint_{\mathbb{S}^n}|v|^{\frac{2n}{n-2\sigma}}\,\ud vol_{g_{\Sn}}\right)^{\frac{n-2\sigma}{n}}\leq \frac{\Gamma(\frac{n}{2}-\sigma)}{\Gamma(\frac{n}{2}+\sigma)}
\dashint_{\mathbb{S}^n}vP_{\sigma}(v)\,\ud vol_{g_{\Sn}}\quad \mbox{for }v\in H^{\sigma}(\mathbb{S}^n),
\ee
where $\dashint_{\mathbb{S}^n}=\frac{1}{|\mathbb{S}^n|}\int_{\mathbb{S}^n}$ and $H^{\sigma}(\mathbb{S}^n)$  is the closure of $C^{\infty}(\Sn)$ under the norm
\[
\begin{split}
\|u\|_{H^{\sigma}(\Sn)}:&=\int_{\Sn}vP_{\sigma}(v) \,\ud vol_{g_{\Sn}}\\
&=P_{\sigma}(1)\int_{\Sn}v^2\ud vol_{g_{\Sn}}+\frac{c_{n,-\sigma}}{2}\iint_{\Sn\times\Sn}\frac{(v(\xi)-v(\zeta))^2}{|\xi-\zeta|^{n+2\sigma}}\ud\xi\ud\zeta.
\end{split}
\]
The sharp Sobolev inequality on $\R^n$ is
\be\label{eq:sharp on rn}
\left(\int_{\mathbb{S}^n}|u|^{\frac{2n}{n-2\sigma}}\,\ud x\right)^{\frac{n-2\sigma}{n}}\leq \frac{\Gamma(\frac{n}{2}-\sigma)}{\omega_n^{\frac{2\sigma}{n}}\Gamma(\frac{n}{2}+\sigma)}
\|u\|^2_{\dot H^{\sigma}(\mathbb{R}^n)}\quad \mbox{for }u\in \dot H^{\sigma}(\mathbb{R}^n),
\ee
where $\omega_n$ is the volume of the unit sphere and $\dot H^{\sigma}(\mathbb{R}^n)$  is the closure of $C_c^{\infty}(\R^n)$ under the norm
\[
\begin{split}
\|u\|_{\dot H^{\sigma}(\R^n)}:&=\|(-\Delta)^{\sigma/2}u \|_{L^2(\R^n)}.
\end{split}
\]
The best constant and extremal functions of \eqref{eq:sharp on rn} were obtained in \cite{Lie83} and some classifications of solutions of \eqref{main equ} with $K\equiv 1$ can be found in \cite{CLO} and \cite{Li04}.

It is clear, by multiplying \eqref{main equ} by $v$, that a necessary condition for solving the problem is that
$K$ has to be positive somewhere. As in the classical case  \cite{KW}, the following Kazdan-Warner type condition
\be\label{1.3}
 \int_{\Sn}\langle \nabla_{g_{\Sn}} K, \nabla_{g_{\Sn}} \xi\rangle v^{\frac{2n}{n-2\sigma}}=0
\ee
gives another obstruction to solving \eqref{main equ}. The proof of \eqref{1.3} is given in \cite{JLX}.
Throughout the paper, we assume that $\sigma\in (0,1)$ and $n\ge 2$ without otherwise stated.
\begin{defn}
For $d>0$, we say that $K\in C(\Sn)$ has flatness order greater than $d$ at $\xi$ if, in some
local coordinate system $\{y_1,\cdots, y_n\}$ centered at $\xi$,
there exists a neighborhood $\mathscr{O}$ of $0$ such that $K(y)=K(0)+o(|y|^{d})$ in $\mathscr{O}$.
\end{defn}

\begin{thm}\label{K-M-E-S}
Let  $\sigma\in (0,1)$, and $K\in C^{1,1}(\Sn)$ be an antipodally symmetric function, i.e., $K(\xi)=K(-\xi)$ $\forall~\xi\in \Sn$, which is positive somewhere on $\Sn$.
If there exists a maximum point of $K$ at which $K$ has flatness order greater than $n-2\sigma$,
then \eqref{main equ} has at least one positive $C^{2}$ solution.
\end{thm}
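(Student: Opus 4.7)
The plan is to exploit antipodal symmetry to gain compactness via a fractional Aubin inequality, and to cross the resulting energy threshold by a careful test function computation that uses the flatness condition. Let $p = \frac{2n}{n-2\sigma}$ and let $H^\sigma_{\mathrm{sym}}(\Sn) \subset H^\sigma(\Sn)$ denote the closed subspace of antipodally symmetric functions. Since $K$ is antipodally symmetric, the principle of symmetric criticality reduces the problem to finding a critical point (on the set where the denominator is positive) of
\[
J_K(v) = \frac{\int_{\Sn} v P_\sigma(v)\,\ud vol_{g_{\Sn}}}{\bigl(\int_{\Sn} K|v|^{p}\,\ud vol_{g_{\Sn}}\bigr)^{2/p}}, \qquad v \in H^\sigma_{\mathrm{sym}}(\Sn),
\]
since any such critical point solves \eqref{main equ} after a positive rescaling.

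The first key step is a fractional Aubin inequality on $H^\sigma_{\mathrm{sym}}(\Sn)$: for every $\va > 0$ there exists $A(\va) > 0$ such that
\[
\Bigl(\int_{\Sn}|v|^{p}\,\ud vol\Bigr)^{2/p} \le \bigl(2^{-2\sigma/n}+\va\bigr)\,\omega_n^{-2\sigma/n}c(n,\sigma)^{-1}\int_{\Sn} vP_\sigma(v)\,\ud vol + A(\va)\int_{\Sn} v^2\,\ud vol
\]
for all $v\in H^\sigma_{\mathrm{sym}}(\Sn)$. The factor $2^{-2\sigma/n}$ is the gain over \eqref{pe1}, reflecting that any sequence in $H^\sigma_{\mathrm{sym}}(\Sn)$ concentrating in $L^p$ must concentrate simultaneously at an antipodal pair of points with equal mass, doubling the concentration cost. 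I would prove this by a concentration-compactness argument adapted to the nonlocal operator $P_\sigma$. Combined with the blow-up / compactness analysis of Part I \cite{JLX}, it follows that a minimizing sequence for $m := \inf_{H^\sigma_{\mathrm{sym}}} J_K$ either subconverges strongly (after replacing $v$ by $|v|$ if necessary) or concentrates at an antipodal pair $\{\xi,-\xi\}$ with limiting energy at least $2^{2\sigma/n}\,\omega_n^{2\sigma/n}c(n,\sigma)\,K(\xi)^{-(n-2\sigma)/n}$. In particular, the strict inequality
\[
m < 2^{2\sigma/n}\,\omega_n^{2\sigma/n}\,c(n,\sigma)\,K_{\max}^{-(n-2\sigma)/n}
\]
forces $m$ to be attained.

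To produce a test function witnessing this strict inequality, pick a maximum point $\xi_0$ of $K$ with flatness order greater than $n-2\sigma$ and set
\[
v_\lambda(\xi) = \widetilde U_\lambda(\xi) + \widetilde U_\lambda(-\xi),
\]
where $\widetilde U_\lambda$ is the conformal pullback to $\Sn$ (via stereographic projection $F$ sending $-\xi_0$ to $\infty$) of the extremal $U_\lambda(y) = \lambda^{(n-2\sigma)/2}(1+\lambda^2|y|^2)^{-(n-2\sigma)/2}$ of \eqref{eq:sharp on rn}. As $\lambda\to\infty$ the two bubbles decouple: the off-diagonal contributions to $\int v_\lambda P_\sigma v_\lambda$ are controlled by the pointwise decay of $\widetilde U_\lambda$ away from $\xi_0$ and give $O(\lambda^{-(n-2\sigma)})$ errors, while the denominator expands as
\[
2 K(\xi_0)\int_{\R^n} U^p(y)\,dy + 2\int_{\R^n}\bigl(K(F(y/\lambda))-K(\xi_0)\bigr)U^p(y)\,dy + \text{error}.
\]
Splitting the latter integral at $|y|\sim\delta\lambda$ and using the flatness condition $K(y)-K(\xi_0)=o(|y|^{n-2\sigma})$ on the inner region together with the $|y|^{-2n}$ decay of $U^p$ on the outer region, the correction is $o(\lambda^{-(n-2\sigma)})$, so $J_K(v_\lambda)$ drops strictly below the Aubin threshold for $\lambda$ large.

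The minimizer $v$ obtained in this way may be taken nonnegative by replacing it with $|v|$ (using the kernel representation \eqref{description of P sigma} to verify $\langle P_\sigma|v|,|v|\rangle \le \langle P_\sigma v,v\rangle$), is of class $C^2$ by the regularity theory of \cite{JLX}, and is strictly positive by the strong maximum principle for $P_\sigma$. The main obstacle is the fractional Aubin inequality itself: the nonlocality of $P_\sigma$ obstructs the cutoff-based localization used in the classical $\sigma=1$ case, so extracting the sharp factor $2^{-2\sigma/n}$ requires a careful concentration-compactness argument in which the long-range interactions across antipodal points through the kernel of $P_\sigma$ must be shown to decouple to leading order.
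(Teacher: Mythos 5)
Your test function step coincides with the paper's (Proposition \ref{less by test function1}): the same pair of antipodal bubbles, the same off-diagonal interaction estimate of order $(\beta-1)^{(n-2\sigma)/2}$, and the same use of the flatness condition to make the $K$-correction subleading. Where you genuinely diverge is in the compactness step. The paper never proves a fractional Aubin inequality for $H^\sigma_{\mathrm{sym}}$; instead it proves Proposition \ref{less then exist1} via subcritical approximation: for each $p < \frac{n+2\sigma}{n-2\sigma}$ a minimizer $v_p$ exists by compactness of the subcritical embedding, and one runs blow-up analysis as $p$ approaches the critical exponent. If the sequence blows up, the rescaled limit $u$ solves the critical equation on $\R^n$, antipodal symmetry forces two equal-mass bubbles at $\{\bar x, -\bar x\}$, yielding $1 \ge 2K(\bar x)\int_{\R^n} u^{2n/(n-2\sigma)} + o(1)$, and combining with the sharp Sobolev inequality contradicts \eqref{eq:less1}. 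This route trades your Aubin inequality for blow-up estimates already established in Part I \cite{JLX}, and so avoids the technical obstacle you correctly identify as the hard part of your proposal. Your route is closer in spirit to Hebey's classical $\sigma=1$ argument, and the missing piece is genuinely plausible: the paper actually proves in Section \ref{improved inequality} (Proposition \ref{aubin inequality}) a fractional Aubin inequality of exactly the shape you want, with constant $2^{2/p-1}(1+\va) = 2^{-2\sigma/n}(1+\va)$, but for the zero-center-of-mass constraint $\M_0^p$ rather than for antipodal symmetry; the proof there adapts Aubin's partition $1<\sum |h_i|^2<1+3\eta$ to the nonlocal quadratic form, carefully controlling cross terms $\iint (h_i(x)-h_i(y))(v(x)-v(y))$ via Cauchy--Schwarz, and the same machinery with a $G$-invariant covering would give the antipodally symmetric version. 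So your proposal is a legitimate alternative, but as written it leaves the key compactness lemma unproven, whereas the paper's subcritical route reduces it to estimates already in hand.
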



For $2\leq n<2+2\sigma$, $K\in C^{1,1}(\Sn)$ has flatness order greater than $n-2\sigma$ at every maximum point.
When $\sigma=1$, the above theorem was proved by Escobar and Schoen \cite{ES} for $n\ge 3$. On $\mathbb{S}^2$, the existence of solutions of $-\Delta_{g_{\Sn}}v+1=Ke^{2v}$ for such $K$ was proved by Moser \cite{Ms}. Theorem \ref{K-M-E-S} was stated in \cite{JLX} and it is proved in Section \ref{proof of kmes}.

\begin{thm}\label{general exist}
 \label{main thm A} Suppose that $K\in C^{1,1}(\Sn)$ is a positive function satisfying that for every critical point
$\xi_0$ of $K$, in some geodesic normal coordinates $\{y_1, \cdots, y_n\}$ centered at $\xi_0$, there exist
some small neighborhood $\mathscr{O}$ of $0$ and positive constants $\beta=\beta(\xi_0)\in (n-2\sigma,n)$, $\gamma\in (n-2\sigma, \beta]$
such that $K\in C^{[\gamma],\gamma-[\gamma]}(\mathscr{O})$ (where $[\gamma]$ is the integer part of $\gamma$) and
\[
 K(y)=K(0)+\sum_{j=1}^{n}a_{j}|y_j|^{\beta}+R(y), \quad \mbox{in } \mathscr{O},
\]
where $a_j=a_j(\xi_0)\neq 0$, $\sum_{j=1}^n a_j\neq 0$,
$R(y)\in C^{[\beta]-1,1}(\mathscr{O})$ satisfies\\ $\sum_{s=0}^{[\beta]}|\nabla^sR(y)||y|^{-\beta+s} \to 0$ as $y\to 0$.
If
\[
\sum_{\xi\in \Sn\mbox{ such that }\nabla_{g_{\Sn}}K(\xi)=0,\  \sum_{j=1}^na_j(\xi)<0}(-1)^{i(\xi)}\neq (-1)^n,
\]
where
\[
 i(\xi)=\#\{a_j(\xi): \nabla_{g_{\Sn}}K(\xi)=0,a_j(\xi)<0,1\leq j\leq n\},
\]
then \eqref{main equ} has at least one $C^2$ positive solution. Moreover, there exists a positive constant $C$ depending only on $n, \sigma$ and $K$ such that for all positive $C^2$ solutions $v$ of \eqref{main equ},
\[
1/C\leq v\leq C\quad\mbox{and}\quad\|v\|_{C^2(\Sn)}\leq C.
\]
\end{thm}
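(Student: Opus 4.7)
The plan is to prove existence via a Leray--Schauder degree computation, using the compactness results of Part I \cite{JLX} to localize all positive solutions in a bounded open set and a blow-up counting formula to show that the degree is nonzero under the stated hypothesis on the flatness coefficients $a_j(\xi)$.

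On $X=C^{2}(\Sn)$, consider
\[
F(v):=v-P_\sigma^{-1}\bigl(c(n,\sigma)\,K\, v_{+}^{(n+2\sigma)/(n-2\sigma)}\bigr),
\]
a compact perturbation of the identity whose nontrivial nonnegative zeros are, by the strong maximum principle for $P_\sigma$, exactly the positive $C^2$ solutions of \eqref{main equ}. By the compactness theorem of Part I, every such solution satisfies $1/C\le v\le C$ and $\|v\|_{C^2(\Sn)}\le C$ with $C=C(n,\sigma,K)$; hence the solution set lies in a bounded open set $\mathcal{O}\subset X$ (bounded away from $0$) with $0\notin F(\partial\mathcal{O})$, and the Leray--Schauder degree $\deg(F,\mathcal{O},0)$ is well defined.

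The heart of the argument is the counting formula
\[
\deg(F,\mathcal{O},0)=-1+\sum_{\xi\in\mathrm{Crit}(K),\ \sum_j a_j(\xi)<0}(-1)^{\,n-i(\xi)}.
\]
Its proof has two ingredients. First, the blow-up analysis of Part I combined with the Kazdan--Warner identity \eqref{1.3}, applied to the expansion $K(y)=K(0)+\sum_j a_j|y_j|^\beta+R(y)$, forces any blow-up profile to concentrate at a critical point $\xi$ of $K$ with $\sum_j a_j(\xi)<0$, so the summation is indeed restricted to these points. Second, near each such $\xi$ one would perform a finite-dimensional Lyapunov--Schmidt reduction against the family of bubbles $u_{y,\lambda}$ (stereographic lifts of the extremizers of \eqref{eq:sharp on rn}); the resulting reduced functional has leading order proportional to $\sum_j a_j(\xi)|y_j|^\beta$ as the concentration scale $\lambda\to\infty$, and its Morse count at $\xi$ yields the local contribution $(-1)^{n-i(\xi)}$. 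The summand $-1$ is the ``background'' degree associated with the noncompact problem for $K\equiv$ const, most conveniently obtained through a subcritical approximation in which the exponent $(n+2\sigma)/(n-2\sigma)$ is replaced by $(n+2\sigma)/(n-2\sigma)-\epsilon$ and $\epsilon\to 0^+$, the subcritical problem having the constant as its only positive solution.

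Since $\sum_{\xi:\sum_ja_j(\xi)<0}(-1)^{i(\xi)}\neq (-1)^n$ is equivalent to $\sum(-1)^{n-i(\xi)}\neq 1$, the counting formula gives $\deg(F,\mathcal{O},0)\neq 0$ and hence a zero of $F$ in $\mathcal{O}$, i.e.\ a positive $C^2$ solution of \eqref{main equ} satisfying the stated uniform bounds. The main obstacle is the Lyapunov--Schmidt reduction in the nonlocal setting: one must invert the linearized operator $P_\sigma-c(n,\sigma)\tfrac{n+2\sigma}{n-2\sigma}Kv^{4\sigma/(n-2\sigma)}$ at a bubble on the $L^2$-orthogonal complement of its kernel (spanned by the bubble together with its translations and dilations), establish weighted error estimates adapted to the decay of $P_\sigma$ at infinity, and then extract the leading term of the reduced functional from the fractional bubble integrals, where the assumption $n-2\sigma<\beta<n$ is precisely what makes the coefficients $a_j(\xi)$ govern the sign of this leading term.
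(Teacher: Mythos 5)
Your overall architecture (Leray--Schauder degree on a bounded set supplied by the Part I compactness, plus a counting formula whose nonvanishing is equivalent to the stated index condition) matches the paper's, and your final formula $-1+\sum_{\xi:\sum_j a_j(\xi)<0}(-1)^{n-i(\xi)}$ simplifies, via $(-1)^{n-i}=(-1)^n(-1)^i$, to exactly the $(-1)^n\sum(-1)^{i(\xi)}-1$ obtained in Corollary~\ref{cor2 of existence}. But the route you propose to that formula --- a Lyapunov--Schmidt reduction at each critical point with $\sum_j a_j<0$ against the fractional bubbles, combined with a subcritical approximation to produce the background contribution $-1$ --- is genuinely different from the paper's. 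The paper never performs a finite-dimensional reduction near individual concentration points. Instead it proves a perturbation theorem (Theorem~\ref{perturbation}) valid for $\|K-1\|_{L^\infty}\le\va_4$ and $\sigma\ge\tfrac12$, using a global variational framework: minimizing $E_{K\circ\varphi_p}$ over $\M_0$ to obtain $w_p$, reading the degree off the Lagrange multiplier map $p\mapsto\Lambda_p$ via the Kazdan--Warner identity, and identifying it with $(-1)^n\deg\bigl(\int_{\Sn}K\circ\varphi_{P,t}\,x,B^{n+1},0\bigr)$; the latter degree is then computed from the expansion of $K$ by Lemmas~\ref{lem condition for not degree}--\ref{lem condition degree not zero} (imported from \cite{Li95}). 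The general case is obtained by homotoping $K$ to $K_\mu=\mu K+(1-\mu)$ and $\sigma$ to $2/3$, using the uniform a priori bounds from Part I to keep the degree invariant (Theorem~\ref{thm:biggest one}). This circumvents precisely the hard nonlocal linearization/inversion work that you flag as the ``main obstacle.''

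The gap in your proposal is therefore concentrated exactly where you say it is: the nonlocal Lyapunov--Schmidt reduction, the extraction of the leading term of the reduced functional as $\sum_j a_j(\xi)|y_j|^\beta$ (which requires delicate bubble integral estimates tied to $\beta\in(n-2\sigma,n)$), the derivation of the local index $(-1)^{n-i(\xi)}$ from it, and the justification of the $-1$ via a subcritical degree computation for $K\equiv\mathrm{const}$. None of these steps are supplied, and each is comparable in length to the entire Section~\ref{proof of general exist} of the paper. Two further points you should address if pursuing this route: (i) you must rule out multi-bubble concentration in the approximating family (the Part I compactness gives a single isolated simple blow-up point, which should be cited explicitly); (ii) the compactness and degree deformation must be made uniform in the approximation parameter, which in the paper is exactly what the two-parameter continuity (in $\mu$ and $\sigma$) of the a priori bounds provides, and which your subcritical scheme would need an analogue of.
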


For $n=3, \sigma=1$, the existence part of the above theorem was established by Bahri and Coron \cite{BC}, and the compactness part were given in Chang, Gursky and Yang \cite{CGY} and Schoen and Zhang \cite{SZ}. For $n\geq 4, \sigma=1$, the above theorem was proved by Li \cite{Li95}. The statement of Theorem \ref{general exist} and a proof of the compactness part were given in \cite{JLX}.  In Section
\ref{proof of general exist}, we prove the existence part of the theorem. The proof is based on a perturbation result, Theorem \ref{perturbation}, and an application of the Leray-Schauder degree. In Section \ref{improved inequality}, we prove a fractional Aubin inequality.

\bigskip



\noindent\textbf{Acknowledgements:} T. Jin was supported in part by a University and Louis Bevier Dissertation Fellowship at Rutgers University and by Rutgers University School of Art and Science Excellence Fellowship. Y.Y. Li was supported in part by NSF (grant no. DMS-1065971 and DMS-1203961) and by Program for Changjiang Scholars and Innovative Research Team in University in China. J. Xiong was supported in part by
CSC project for visiting Rutgers University as a student and the First Class Postdoctoral Science Foundation of China (No. 2012M520002). He was very grateful to the Department of Mathematics at Rutgers University for the kind hospitality.

\section{Proof of Theorem \ref{K-M-E-S}}\label{proof of kmes}

Let $H^{\sigma}_{as}$ be the set of antipodally symmetric functions in $H^{\sigma}(\mathbb{S}^n)$, and let
\[
\lambda_{as}(K)=\inf_{v\in H^{\sigma}_{as}}\left\{\int_{\mathbb{S}^n}vP_{\sigma}(v): \int_{\mathbb{S}^n}K|v|^{\frac{2n}{n-2\sigma}}=1\right\}.
\]
We also denote $\omega_n$ as the volume of $\mathbb{S}^n$.
The proof of Theorem \ref{K-M-E-S} is divided into two steps.

\begin{prop}\label{less then exist1}
Let $K\in C^{1,1}(\mathbb{S}^n)$ be antipodally symmetric and positive somewhere. If
\begin{equation}\label{eq:less1}
\lambda_{as}(K)<\frac{P_{\sigma}(1)\omega_n^{\frac{2\sigma}{n}}2^{\frac{2\sigma}{n}}}{(\max_{\Sn}{K})^{\frac{n-2\sigma}{n}}},
\end{equation}
then there exists a positive and antipodally symmetric $C^{2}(\mathbb{S}^n)$ solution of \eqref{main equ}.
\end{prop}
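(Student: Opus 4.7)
The plan is to realize a positive antipodally symmetric solution as a minimizer attaining $\lambda_{as}(K)$, use \eqref{eq:less1} as the Aubin-type threshold ruling out concentration along a minimizing sequence, and then rescale. With $p=2n/(n-2\sigma)$, I take a minimizing sequence $\{v_k\}\subset H^\sigma_{as}$ satisfying $\int_{\Sn}K|v_k|^p=1$ and $\int_{\Sn}v_kP_\sigma v_k\to \lambda_{as}(K)$. The singular-integral representation \eqref{description of P sigma} together with $(|a|-|b|)^2\le(a-b)^2$ shows that replacing $v_k$ by $|v_k|$ only lowers the energy, so I may assume $v_k\ge 0$. Combining \eqref{pe1} with $P_\sigma\ge P_\sigma(1)I$ bounds $\{v_k\}$ in $H^\sigma(\Sn)$, and after extraction $v_k\rightharpoonup v\in H^\sigma_{as}$ weakly with $v_k\to v$ in $L^2$ and a.e.

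The crucial step is to rule out concentration. A Lions-type concentration-compactness lemma for the nonlocal operator $P_\sigma$, obtained by pulling back the sharp $\R^n$ Sobolev inequality \eqref{eq:sharp on rn} through the stereographic chart $F$ and exploiting the conformal covariance of $P_\sigma$ recorded in the introduction, yields, after passing to a subsequence,
\[
|v_k|^p\,\ud vol_{g_{\Sn}}\rightharpoonup |v|^p\,\ud vol_{g_{\Sn}}+\sum_j\mu_j\delta_{\xi_j},\qquad v_kP_\sigma v_k\rightharpoonup \nu\ge vP_\sigma v+\sum_j P_\sigma(1)\omega_n^{2\sigma/n}\mu_j^{(n-2\sigma)/n}\delta_{\xi_j}.
\]
Since $v_k$ is antipodally symmetric, $\mu$ is invariant under $\xi\mapsto-\xi$, so atoms occur in pairs $\{\xi_j,-\xi_j\}$ of equal mass and equal $K$-value. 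Writing $A=\int K|v|^p$ and $B=1-A$ for the atomic part, the bound $B=2\sum_{\text{pairs}}K(\xi_j)\mu_j\le 2(\max_{\Sn}K)\sum_{\text{pairs}}\mu_j$, the subadditivity of $t\mapsto t^{(n-2\sigma)/n}$, and the homogeneous inequality $\int vP_\sigma v\ge \lambda_{as}(K)A^{(n-2\sigma)/n}$ combine to
\[
\lambda_{as}(K)\ge \lambda_{as}(K)A^{(n-2\sigma)/n}+\frac{P_\sigma(1)\omega_n^{2\sigma/n}\,2^{2\sigma/n}}{(\max_{\Sn}K)^{(n-2\sigma)/n}}\,B^{(n-2\sigma)/n}.
\]
The elementary inequality $A^\alpha+B^\alpha\ge 1$ for $A+B=1$, $A,B\in[0,1]$ and $\alpha=(n-2\sigma)/n<1$ then forces, whenever $B>0$, the bound $\lambda_{as}(K)\ge P_\sigma(1)\omega_n^{2\sigma/n}\,2^{2\sigma/n}/(\max_{\Sn}K)^{(n-2\sigma)/n}$, contradicting \eqref{eq:less1}. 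Hence $B=0$, $v_k\to v$ in $L^p$, and $v$ is a nonnegative antipodally symmetric minimizer satisfying the Euler--Lagrange equation $P_\sigma v=\lambda_{as}(K)Kv^{p-1}$.

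Finally, the rescaling $v\mapsto (\lambda_{as}(K)/c(n,\sigma))^{(n-2\sigma)/(4\sigma)}v$ turns this equation into \eqref{main equ}. Strict positivity of $v$ follows from the strong maximum principle: if $v(\xi_*)=0$, then \eqref{description of P sigma} gives $P_\sigma v(\xi_*)=-c_{n,-\sigma}\int_{\Sn}v(\zeta)|\xi_*-\zeta|^{-n-2\sigma}\,\ud vol_{g_{\Sn}}(\zeta)\le 0$ with equality only if $v\equiv 0$, while the equation forces the left-hand side to vanish at $\xi_*$; hence $v>0$ everywhere. A standard fractional elliptic bootstrap starting from $v\in L^p$ upgrades regularity to $v\in C^2(\Sn)$. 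The principal obstacle is the concentration-compactness step: one must establish the sharp single-atom lower bound for the nonlocal $P_\sigma$ (ordinary cutoff localization is blocked by nonlocality and must be replaced by stereographic conformal change combined with \eqref{eq:sharp on rn}) and then verify that clustering into a single antipodal pair is exactly the worst configuration, producing precisely the numerical factor $2^{2\sigma/n}$ appearing in \eqref{eq:less1}.
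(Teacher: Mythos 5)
Your proposal is correct in broad structure but takes a genuinely different route from the paper. The paper avoids working at the critical exponent directly: it introduces subcritical minimization problems $\lambda_{as,p}(K)$ for $p < \frac{n+2\sigma}{n-2\sigma}$, observes that these are always attained (compactness is free), and then studies a sequence of subcritical minimizers $v_i$ as $p_i \to \frac{n+2\sigma}{n-2\sigma}$. If the weak limit is nonzero, the representation \eqref{description of P sigma} gives positivity and one is done. If the weak limit vanishes, the paper performs a blow-up analysis (rescale around the maxima, pass to a limit equation $(-\Delta)^\sigma u = \lambda K(\bar x)u^{\frac{n+2\sigma}{n-2\sigma}}$ on $\R^n$), uses the antipodal symmetry of $K$ and $v_i$ to double the mass contribution as in \eqref{norm less1}, shows that the contribution from $\{K<0\}$ vanishes, and finally contradicts \eqref{eq:less1} via \eqref{pe1}. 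Your route replaces all of this by direct minimization at the critical exponent together with a Lions-type concentration--compactness decomposition for the nonlocal operator $P_\sigma$ on $\Sn$. Both deliver the same numerical factor $2^{2\sigma/n}$ via the antipodal pairing of concentration profiles, and both ultimately rest on the sharp constant $P_\sigma(1)\omega_n^{2\sigma/n}$.

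The main caveat is the one you flag yourself: the Lions-type lemma, giving the sharp atomic lower bound $\nu_j \ge P_\sigma(1)\omega_n^{2\sigma/n}\mu_j^{(n-2\sigma)/n}$ for the nonlocal quadratic form $\int v P_\sigma v$ on the sphere, is nontrivial and not a directly citable statement here. The usual cutoff localization fails for nonlocal energies, and the stereographic-projection route has to handle the excluded pole (or rotate the pole for each atom). This is a genuine technical obstacle, and precisely the kind of machinery the paper's subcritical approximation sidesteps: the subcritical problems are compact for free, and the needed compactness hardware (local estimates, isolated-simple-blow-up analysis) had already been built in \cite{JLX}. A second, minor point: in the inequality $\lambda_{as}(K)\ge \lambda_{as}(K)A^{\alpha}+TB^{\alpha}$ with $\alpha=(n-2\sigma)/n$, you must account for the fact that $K$ may change sign, so $A=\int K|v|^p$ could a priori be $\le 0$. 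This is handled easily (if $A\le 0$ then $B\ge 1$, and $\int vP_\sigma v\ge 0$ already yields $\lambda_{as}(K)\ge T$, a contradiction), and one also needs to use only the positive part of the atomic contributions when bounding $B$ by $(\max_{\Sn}K)\sum\mu_j$; but the current write-up glosses over the sign issue. Finally, since you minimize over $H^\sigma_{as}$, you should invoke symmetric criticality or directly note that both sides of the Euler--Lagrange equation are antipodally symmetric, so the equation holds against all test functions, not just symmetric ones.
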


\begin{prop}\label{less by test function1}
Let $K\in C^{1,1}(\mathbb{S}^n)$ be antipodally symmetric and positive somewhere. If there exists a maximum point of $K$ at which $K$ has flatness order greater than $n-2\sigma$, then
\begin{equation}\label{less than1}
\lambda_{as}(K)<\frac{P_{\sigma}(1)\omega_n^{\frac{2\sigma}{n}}2^{\frac{2\sigma}{n}}}{(\max_{\Sn}{K})^{\frac{n-2\sigma}{n}}}.
\end{equation}
\end{prop}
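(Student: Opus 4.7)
The plan is to exhibit an antipodally symmetric test function whose Rayleigh quotient for $\lambda_{as}(K)$ falls strictly below the right-hand side of \eqref{less than1}, following the classical Aubin test-function strategy adapted to the nonlocal operator $P_\sigma$. Fix a maximum point $\xi_0\in\Sn$ at which the flatness order of $K$ exceeds $n-2\sigma$; by the antipodal symmetry of $K$, the same property holds at $-\xi_0$. Let $F$ be the inverse stereographic projection with $F(0)=\xi_0$, let $U_\va(y)=(\va/(\va^2+|y|^2))^{(n-2\sigma)/2}$ be the Aubin extremal (suitably normalized so that $(-\Delta)^\sigma U_\va = c(n,\sigma)U_\va^{(n+2\sigma)/(n-2\sigma)}$), and define the transplanted bubble $\phi_\va\in C^\infty(\Sn)$ via $\phi_\va\circ F = |J_F|^{-(n-2\sigma)/(2n)}U_\va$, so that the conformal covariance of $P_\sigma$ stated in the introduction gives $P_\sigma\phi_\va = c(n,\sigma)\phi_\va^{(n+2\sigma)/(n-2\sigma)}$ on $\Sn$. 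Setting $\tilde\phi_\va(\xi):=\phi_\va(-\xi)$, the proposed test function is $v_\va:=\phi_\va+\tilde\phi_\va\in H^\sigma_{as}$.

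Write $A_0:=\int\phi_\va P_\sigma\phi_\va\,dvol_{g_{\Sn}}$ and $B_0:=\int\phi_\va^{2n/(n-2\sigma)}\,dvol_{g_{\Sn}}$; these are $\va$-independent by conformal covariance, satisfy $A_0=c(n,\sigma)B_0$, and yield $A_0/B_0^{(n-2\sigma)/n}=P_\sigma(1)\omega_n^{2\sigma/n}$ from the sharp inequality \eqref{pe1}, saturated by $\phi_\va$. Bilinearity and the isometry $\xi\mapsto-\xi$ give
\[
\int v_\va P_\sigma v_\va\,dvol_{g_{\Sn}} = 2A_0 + 2I(\va),\qquad I(\va):=\int\phi_\va P_\sigma\tilde\phi_\va\,dvol_{g_{\Sn}},
\]
while a pointwise Taylor expansion of $(\phi_\va+\tilde\phi_\va)^{2n/(n-2\sigma)}$ on the regions where one bubble dominates the other, combined with the antipodal symmetry of $K$, gives
\[
\int Kv_\va^{2n/(n-2\sigma)}\,dvol_{g_{\Sn}} = 2\max K\cdot B_0 + \frac{4n}{n-2\sigma}\int K\phi_\va^{(n+2\sigma)/(n-2\sigma)}\tilde\phi_\va\,dvol_{g_{\Sn}} + 2\int(K-\max K)\phi_\va^{2n/(n-2\sigma)}\,dvol_{g_{\Sn}} + o(\va^{n-2\sigma}).
\]

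Three size estimates drive the argument. First, using the Yamabe identity, $I(\va)=c(n,\sigma)\int\phi_\va^{(n+2\sigma)/(n-2\sigma)}\tilde\phi_\va\,dvol_{g_{\Sn}}\sim c_1\va^{n-2\sigma}$ with $c_1>0$, since the $L^1$-mass of $\phi_\va^{(n+2\sigma)/(n-2\sigma)}$ is of order $\va^{(n-2\sigma)/2}$ and $\tilde\phi_\va(\xi_0)\sim\va^{(n-2\sigma)/2}$. Second, by the same concentration argument together with $K\to\max K$ in the region where $\phi_\va^{(n+2\sigma)/(n-2\sigma)}$ concentrates, the mixed term in the denominator satisfies $\int K\phi_\va^{(n+2\sigma)/(n-2\sigma)}\tilde\phi_\va\,dvol_{g_{\Sn}} = (K(\xi_0)/c(n,\sigma))I(\va)+o(\va^{n-2\sigma})$. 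Third, the flatness hypothesis gives $\int(K-\max K)\phi_\va^{2n/(n-2\sigma)}\,dvol_{g_{\Sn}}=o(\va^{n-2\sigma})$, obtained by rescaling $y=\va z$ in stereographic coordinates, using $|K(F(\va z))-K(\xi_0)|=o(|\va z|^{n-2\sigma})$, and dominated convergence against the integrable kernel $|z|^{n-2\sigma}(1+|z|^2)^{-n}$ on $\R^n$. Inserting into the Rayleigh quotient and Taylor-expanding,
\[
\frac{\int v_\va P_\sigma v_\va\,dvol_{g_{\Sn}}}{\bigl(\int Kv_\va^{2n/(n-2\sigma)}\,dvol_{g_{\Sn}}\bigr)^{(n-2\sigma)/n}} = \frac{P_\sigma(1)\omega_n^{2\sigma/n}2^{2\sigma/n}}{(\max K)^{(n-2\sigma)/n}}\Bigl(1 + \Bigl(1-\frac{n-2\sigma}{n}\cdot\frac{2n}{n-2\sigma}\Bigr)\frac{I(\va)}{A_0} + o(\va^{n-2\sigma})\Bigr),
\]
and the arithmetic coefficient $1-2=-1$ makes the $\va^{n-2\sigma}$ correction strictly negative, which produces \eqref{less than1} for all sufficiently small $\va>0$.

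The main technical obstacle is the rigorous control of the order-$\va^{n-2\sigma}$ correction terms in the nonlocal setting. The interaction $I(\va)$ must be pinned down to exactly order $\va^{n-2\sigma}$ by leveraging the Yamabe identity, which converts the nonlocal inner product into a standard weighted $L^1$ integral amenable to concentration analysis; and the flatness-based gain for the $K$-variation integral sits at the integrability threshold of the kernel $|z|^{n-2\sigma}(1+|z|^2)^{-n}$ on $\R^n$, which is exactly why the hypothesis of flatness order strictly greater than $n-2\sigma$ is the correct one to close the argument.
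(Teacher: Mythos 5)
Your proposal is correct and follows essentially the same strategy as the paper: both use the antipodally symmetric two-bubble test function $v=\phi+\tilde\phi$ with bubbles centered at the flat maximum $\xi_0$ and its antipode, convert the nonlocal interaction $\int\phi\,P_\sigma\tilde\phi$ into the weighted $L^1$ integral $c(n,\sigma)\int\phi^{(n+2\sigma)/(n-2\sigma)}\tilde\phi$ via self-adjointness and the Yamabe equation, expand the Rayleigh quotient to first order, and use the flatness hypothesis to render the $K$-variation integral lower order. The paper parametrizes the bubbles intrinsically on $\Sn$ as $v_{i,\beta}=(\sqrt{\beta^2-1}/(\beta-\cos r_i))^{(n-2\sigma)/2}$ with $\beta\to 1$ rather than pulling back the Euclidean extremal $U_\va$, and uses the elementary pointwise convexity bound $(a+b)^{q}\geq a^{q}+qa^{q-1}b$ to one-sidedly control the denominator rather than a two-sided Taylor expansion, but these are cosmetic; the leading-order cancellation coefficient $1-2=-1$ is identical.
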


\begin{proof}[Proof of Theorem \ref{K-M-E-S}]
It follows from Proposition \ref{less then exist1} and Proposition \ref{less by test function1}.
\end{proof}

The proof of Proposition \ref{less then exist1} uses subcritical approximations. For $1<p<\frac{n+2\sigma}{n-2\sigma}$, we define
\[
\lambda_{as,p}(K)=\inf_{v\in H^{\sigma}_{as}}\left\{\int_{\mathbb{S}^n}vP_{\sigma}(v): \int_{\mathbb{S}^n}K|v|^{p+1}=1\right\}.
\]
We begin with a lemma
\begin{lem}\label{lem of existence subcritical}
Let $K\in C^{1,1}(\mathbb{S}^n)$ be antipodally symmetric and positive somewhere. Then $\lambda_{as,p}(K)$ is achieved by
a positive and antipodally symmetric $C^{2}(\Sn)$ function $v_{p}$, which satisfies
\begin{equation}\label{eq:subcritical1}
P_{\sigma}(v_p)=\lambda_{as,p}(K) K v_p^p\ \ \ \text{and}\ \ \ \int_{\mathbb{S}^n}Kv_p^{p+1}=1.
\end{equation}
\end{lem}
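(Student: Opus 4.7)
The plan is to obtain $v_p$ via the standard direct method applied to the constrained variational problem defining $\lambda_{as,p}(K)$, and then extract the Euler-Lagrange equation, positivity, and regularity.

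First I would check that $\lambda_{as,p}(K)$ is finite and strictly positive, then take a minimizing sequence $\{v_k\}\subset H^{\sigma}_{as}$ with $\int_{\Sn}K|v_k|^{p+1}=1$ and $\int_{\Sn} v_k P_\sigma(v_k)\to \lambda_{as,p}(K)$. Since $\int v_k P_\sigma(v_k)=\|v_k\|_{H^\sigma(\Sn)}^2$, the sequence is bounded in $H^\sigma_{as}$, so up to a subsequence it converges weakly to some $v_p\in H^{\sigma}_{as}$. Because $p+1<\tfrac{2n}{n-2\sigma}$ is subcritical, the embedding $H^\sigma(\Sn)\hookrightarrow L^{p+1}(\Sn)$ is compact, hence $v_k\to v_p$ strongly in $L^{p+1}(\Sn)$. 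This preserves the constraint $\int_{\Sn}K|v_p|^{p+1}=1$, so $v_p\not\equiv 0$, while weak lower semicontinuity of the quadratic form $v\mapsto \int v P_\sigma(v)$ yields $\int v_p P_\sigma(v_p)\le \lambda_{as,p}(K)$. Minimality forces equality and shows $v_p$ is a minimizer. Using the pointwise kernel representation \eqref{description of P sigma}, $(v(\xi)-v(\zeta))^2\ge(|v(\xi)|-|v(\zeta)|)^2$, so replacing $v_p$ by $|v_p|$ does not increase the energy; hence we may take $v_p\ge 0$.

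Next, the Lagrange multiplier rule applied in $H^{\sigma}_{as}$ gives a constant $\mu$ with $P_\sigma(v_p)=\mu K v_p^p$; testing against $v_p$ and using the constraint identifies $\mu=\lambda_{as,p}(K)$. For regularity, I would bootstrap: $v_p\in H^\sigma(\Sn)\subset L^{q}(\Sn)$ for some $q>1$, so the right-hand side $Kv_p^p$ lies in a Lebesgue space that, together with standard regularity for $P_\sigma$ (inverting via the Riesz-type kernel on $\Sn$, or passing to $\R^n$ via the stereographic identity and applying fractional-Laplacian regularity as used in part~I \cite{JLX}), gives H\"older continuity of $v_p$; iterating and using $K\in C^{1,1}$ improves the regularity to $C^2(\Sn)$.

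Finally, to upgrade $v_p\ge 0$ to $v_p>0$ on all of $\Sn$, I would argue by contradiction: if $v_p(\xi_0)=0$ at some $\xi_0\in\Sn$, then \eqref{description of P sigma} yields
\[
P_\sigma(v_p)(\xi_0)=-c_{n,-\sigma}\int_{\Sn}\frac{v_p(\zeta)}{|\xi_0-\zeta|^{n+2\sigma}}\,\ud vol_{g_{\Sn}}(\zeta)<0,
\]
since $v_p\not\equiv 0$, whereas the equation forces $P_\sigma(v_p)(\xi_0)=\lambda_{as,p}(K)K(\xi_0)v_p(\xi_0)^p=0$, a contradiction. Antipodal symmetry is preserved throughout because the functional, the constraint, and the operator $P_\sigma$ all respect the $\Z_2$-action, and the minimization is performed inside $H^\sigma_{as}$.

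The only mildly delicate point is the regularity bootstrap for $P_\sigma$ on $\Sn$ and the initial $L^\infty$ bound needed to begin it; everything else is direct method plus a maximum-principle argument tailored to the integral representation of $P_\sigma$. Strict positivity $\lambda_{as,p}(K)>0$ comes from combining the fractional Sobolev inequality \eqref{pe1} with $\int_{\Sn}K|v|^{p+1}\le(\max_{\Sn}K)\|v\|_{L^{p+1}}^{p+1}$.
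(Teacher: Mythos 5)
Your proposal is correct and follows essentially the same route as the paper: direct method with compact subcritical embedding, reduction to a nonnegative minimizer via the energy inequality $\int|v|P_\sigma(|v|)\le\int vP_\sigma(v)$ (which the paper invokes and you justify from the kernel representation), Lagrange multiplier identified by testing with $v_p$, and positivity plus regularity. The paper simply cites Proposition~2.4 and Theorem~2.1 of~\cite{JLX} for the regularity bootstrap and strict positivity, whereas you spell out the bootstrap sketch and the strict-positivity contradiction using \eqref{description of P sigma}; both are the same underlying mechanism.
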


\begin{proof}
The existence of a nonnegative solution $v_p$ of \eqref{eq:subcritical1} follows from standard variational method and the inequality $\int_{\Sn}|v|P_{\sigma}(|v|)\leq \int_{\Sn} vP_{\sigma}(v)$ for all $v\in H^{\sigma}(\mathbb{S}^n)$.
The regularity and positivity of $v_p$ follows from Proposition 2.4 and Theorem 2.1 in \cite{JLX}.
\end{proof}

\begin{proof}[Proof of Proposition \ref{less then exist1}]
First of all, we see that
\[
\limsup_{p\to\frac{n+2\sigma}{n-2\sigma}}\lambda_{as,p}(K)\leq \lda_{as}(K).
\]
Indeed, for any $\va>0$, there exists a nonnegative function $v\in H^{\sigma}_{as}$ such that
\[
\int_{\Sn}vP_{\sigma}(v)<\lda_{as}+\va \ \mbox{ and }\ \int_{\Sn}Kv^{\frac{2n}{n-2\sigma}}=1.
\]
Let $V_p:=\int_{\Sn}Kv^{p+1}$. Since $\lim_{p\to\frac{n+2\sigma}{n-2\sigma}}V_p=\int_{\Sn}Kv^{\frac{2n}{n-2\sigma}}=1$, we have, for $p$ closed to $\frac{n+2\sigma}{n-2\sigma}$,
\[
\lda_{as, p}(K)\leq\int_{\Sn}{\frac{v}{V_p^{1/(p+1)}}P_{\sigma}\left(\frac{v}{V_p^{1/(p+1)}}\right)}\leq \lda_{as}(K)+2\va.
\]
Hence, we may assume that there exists a sequence $\{p_i\}\to\frac{n+2\sigma}{n-2\sigma}$ such
that $\lambda_{as,p_i}(K)\to \lambda$ for some $\lambda\leq\lambda_{as}(K)$. Since $\{v_{i}\}$, which is a sequence of minimizers in Lemma \ref{lem of existence subcritical} for $p=p_i$, is bounded in
$H^{\sigma}(\mathbb{S}^n)$, then there exists $v\in H^{\sigma}(\Sn)$ such that $v_{i}\rightharpoonup v$ weakly in  $H^{\sigma}(\mathbb{S}^n)$ and $v$ is nonnegative. If $v\not\equiv 0$, it follows from \eqref{description of P sigma} that $v>0$ on $\Sn$, and we are done. Now we suppose that $v\equiv 0$. If $\{\|v_{i}\|_{L^{\infty}(\mathbb{S}^n)}\}$ is bounded, by the local estimates established in \cite{JLX} we have $\{\|v_{i}\|_{C^{2}(\mathbb{S}^n)}\}$ is bounded, too. Therefore, $v_{i}\to 0$ in $C^1(\Sn)$ which leads to $1=\int_{\mathbb{S}^n}K|v_{i}|^{p_i+1}\to 0$. This is a contradiction. Thus we may assume that $v_{i}(x_{i}):=\max_{\mathbb{S}^n}v_{i}\to\infty.$ Since $\Sn$ is compact, there exists a
subsequence of $\{x_{i}\}$, which will be still denoted as $\{x_{i}\}$, and $\bar x$ such that $x_{i}\to\bar x$.
Without loss of generality we assume that $\bar x$ is the south pole. Via the stereographic projection $F^{-1}$, \eqref{eq:subcritical1} becomes
\begin{equation}\label{eq:subcritical in plane1}
(-\Delta)^{\sigma}u_{i}(y)=\lambda_{as,p_i}(K)K\circ F(y)\left(\frac{2}{1+|y|^2}\right)^{\va_i}u_{i}^{p_i}(y),\ \ y\in\mathbb{R}^n
\end{equation}
where $v_{i}\circ F(y)=(\frac{1+|y|^2}{2})^{\frac{n-2\sigma}{2}}u_{i}(y)$ and $\va_i=\frac{n+2\sigma-p_i(n-2\sigma)}{2}$.
Thus for any $y\in\mathbb{R}^n$, $u_{i}(y)\leq 2^{\frac{n-2\sigma}{2}}u_{i}(y_{i})$
where $y_{i}:=F^{-1}(x_{i})\to 0.$ For simplicity, we denote $m_{i}:=u_{i}(y_{i})$. By our assumption on $v_{i}$ we have $m_{i}\to\infty.$
Define
\[
\tilde u_{i}(y)=(m_{i})^{-1}u_{i}\big((m_{i})^{\frac{1-p_i}{2\sigma}}y+y_{i}\big).
\]
From \eqref{eq:subcritical in plane1} we see that $\tilde u_{i}(y)$ satisfies, for any $y\in\mathbb{R}^n$,
\begin{equation}\label{eq:scaled subcritical in plane}
\begin{split}
(-\Delta)^{\sigma}\tilde u_{i}(y)=&\lambda_{as,p_i}(K)K\circ F(m_{i}^{\frac{1-p_i}{2\sigma}}y+y_{i})\\
&\quad\cdot\left(\frac{2}{1+|(m_{i})^{\frac{1-p_i}{2\sigma}}y+y_{i}|^2}\right)^{\va_i}\tilde u_{i}^{i}(y).
\end{split}
\end{equation}
Since $0<\tilde u_{i}\leq 2^{\frac{n-2\sigma}{2}}$, by the local estimates in \cite{JLX} $\{\tilde u_{i}\}$ is
bounded in $C^{2}_{loc}(\mathbb{R}^n)$.
Note that since $\{v_{i}\}$ is bounded in
$H^{\sigma}(\mathbb{S}^n)$, $\{\tilde u_{i}\}$ is bounded in $\dot H^{\sigma}(\R^n)$. Then there exists $u\in C^{2}(\mathbb{R}^n)\cap \dot H^{\sigma}(\R^n)$ such that, by passing to a subsequence,  $\tilde u_{i}\to u$ in
$C^{2}_{loc}(\mathbb{R}^n)$, $u(0)=1$, $\tilde u_{i}\rightharpoonup u$ weakly in $\dot H^{\sigma}(\R^n)$ and $u$ weakly satisfies
\begin{equation}\label{eq:critical in plane1}
(-\Delta)^{\sigma} u=\lambda K(\bar x)u^{\frac{n+2\sigma}{n-2\sigma}}.
\end{equation}
Hence $\lambda>0$, $K(\bar x)>0$, and the solutions of \eqref{eq:critical in plane1} are classified in \cite{CLO} and \cite{Li04} (see also Theorem 1.5 in \cite{JLX}).


For $x\in\mathbb{S}^n$ and $r>0$, we denote $\mathcal{B}(x,r)$ be the geodesic ball centered at $x$ with
radius $r$ on $\mathbb{S}^n$, and for $y\in\mathbb{R}^n$ and $R>0$, we denote $B(y,R)$ be the
Euclidean ball in $\mathbb{R}^n$ of center $y$ and radius $R$.  For any $R>0$, let $\Omega_i:=F(B(y_{i},m_{i}^{\frac{1-p_i}{2\sigma}}R))$, we have
\begin{equation*}
\begin{split}
&\int_{\Omega_i}Kv_{i}^{p_i+1}=\int_{B(y_{i},m_{i}^{\frac{1-p_i}{2\sigma}}R)}K\circ F(y)\left(\frac{2}{1+|y|^2}\right)^{\va_i}u_{i}^{p_i+1}\\
&=\int_{B(0,R)}m_{i}^{\frac{\va_i}{2}}K\circ F((m_{i})^{\frac{1-p_i}{2\sigma}}y+y_{i})\left(\frac{2}{1+|(m_{i})^{\frac{1-p_i}{2\sigma}}y+y_{i}|^2}\right)^{\va_i}\tilde u_{i}^{p_i}(y)\\
&\ge K(\bar x)\int_{B(0,R)}u^{\frac{2n}{n-2\sigma}}+o(1)
\end{split}
\end{equation*}
as $\ p_i\to\frac{n+2\sigma}{n-2\sigma}$, where we used that $K$ is positive near $\bar x$, $\va_i\to 0$ and $\tilde u_{i}\to u$ in $C^{2}_{loc}(\mathbb{R}^n)$. 
Since $K$ and $v_{i}$ are antipodally symmetric,
we have, by taking $\delta$ small and $R$ sufficiently large,
\begin{equation}\label{norm less1}
\begin{split}
1=\int_{\mathbb{S}^n}Kv_{i}^{p_i+1}&\ge 2\int_{\mathcal{B}(x_1,\delta)}Kv_{i}^{p_i+1}+\int_{\{K<0\}}Kv_{i}^{p_i+1}\\
&=2K(\bar x)\int_{\mathbb{R}^n}u^{\frac{2n}{n-2\sigma}}+\int_{\{K<0\}}Kv_{i}^{p_i+1}.
\end{split}
\end{equation}
We claim that
\[
\int_{\{K<0\}}Kv_{i}^{p_i+1}\to 0\quad\mbox{as}\quad p_i\to\frac{n+2\sigma}{n-2\sigma}.
\]
Indeed, for any $\va>0$, it is not difficult to show, by blow up analysis, that $\|v_{i}\|_{L^{\infty}(\Omega_{\va/4})}\leq C(\va)$ where $\Omega_{\va}:=\{x\in \Sn: K(x)<-\va\}$ and $C(\va)$ is independent of $p_i$. By the local estimates established in \cite{JLX}, we have $\|v_{i}\|_{C^{2}(\Omega_{\va/2})}\leq C(\va)$ and hence $v_{i}\to 0$ in $C^1(\overline\Omega_{\va})$ (recall that we assumed that $v_{i}\rightharpoonup 0$ weakly in $H^{\sigma}(\Sn)$). Thus when $p_i$ is sufficiently close to $\frac{n+2\sigma}{n-2\sigma}$,
\[\int_{\Omega_{\va}}|K|v_{i}^{p_i+1}<\va.
\]
On the other hand, by H\"older inequality and Sobolev inequality,
\[
\int_{-\va\le K<0}|K|v_{i}^{p_i+1}<C(n,\sigma)\va\|v_{i}\|_{L^{\frac{2n}{n-2\sigma}}}^{p_i+1}\leq C(n,\sigma,\lambda_{as})\va,
\]
which finishes the proof of our claim. Thus, \eqref{norm less1} leads to
\be\label{norm less2}
1\ge 2K(\bar x)\int_{\mathbb{R}^n}u^{\frac{2n}{n-2\sigma}}+o(1).
\ee
By the sharp Sobolev inequality \eqref{pe1}, \eqref{eq:critical in plane1} and \eqref{norm less2}, we have
\begin{equation*}
\begin{split}
P_{\sigma}(1)\omega_n^{\frac{2\sigma}{n}}\leq \frac{\int_{\mathbb{R}^n}u(-\Delta)^{\sigma}u}{\left(\int_{\mathbb{R}^n}
u^{\frac{2n}{n-2\sigma}}\right)^{\frac{n-2\sigma}{n}}}&=\lambda K(\bar x)\left(\int_{\mathbb{R}^n}u^{\frac{2n}{n-2\sigma}}\right)^{\frac{2\sigma}{n}}
\\
&\leq \lambda_{as}(K)K(\bar x)(2K(\bar x))^{-2\sigma/n}\\
&\leq \lambda_{as}(K)2^{-2\sigma/n}(\max_{\Sn} K)^{1-2\sigma/n},
\end{split}
\end{equation*}
which contradicts with \eqref{eq:less1}.
\end{proof}

Next we shall prove Proposition \ref{less by test function1} using some test functions, which are inspired by \cite{Hebey, Ro}.

\begin{proof}[Proof of Proposition \ref{less by test function1}]
Let $\xi_1$ be a maximum point of $K$ at which $K$ has flatness order greater than $n-2\sigma$. Suppose $\xi_2$ is the antipodal point of $\xi_1$.
For $\beta>1$ and $i=1,2$ we define
\be\label{eq:bubble}
v_{i,\beta}(x)=\left(\frac{\sqrt{\beta^2-1}}{\beta-\cos r_i}\right)^{\frac{n-2\sigma}{2}},
\ee
where $r_i=d(x,\xi_i)$ is the geodesic distance between $x$ and $\xi_i$ on the sphere. It is clear that
\[
P_{\sigma}(v_{i,\beta})=P_{\sigma}(1)v_{i,\beta}^{\frac{n+2\sigma}{n-2\sigma}}\quad\mbox{and}\quad\int_{\mathbb{S}^n}v_{i,\beta}^{\frac{2n}{n-2\beta}}=\omega_n.
\]
Let
\[
v_{\beta}=v_{1,\beta}+v_{2,\beta},
\]
which is antipodally symmetric. Then
\begin{equation*}
\begin{split}
\int_{\mathbb{S}^n}v_{\beta}P_{\sigma}(v_{\beta})&=P_{\sigma}(1)\int_{\Sn}\dsum_{i=1}^2 v_{i,\beta}^{\frac{n+2\sigma}{n-2\sigma}}\dsum_{j=1}^2 v_{j,\beta}\\
&=P_{\sigma}(1)\int_{\Sn}\dsum_{i=1}^2 v_{i,\beta}^{\frac{2n}{n-2\sigma}}+2 v_{1,\beta}^{\frac{n+2\sigma}{n-2\sigma}}v_{2,\beta}\\
&=P_{\sigma}(1)2\omega_n \left(1+\omega_n^{-1}\int_{\Sn} v_{1,\beta}^{\frac{n+2\sigma}{n-2\sigma}}v_{2,\beta}\right).
\end{split}
\end{equation*}
By direct computations with change of variables, we have
\[
\int_{\Sn}v_{1,\beta}^{\frac{n+2\sigma}{n-2\sigma}}v_{2,\beta}=A(\beta-1)^{\frac{n-2\sigma}{2}}+o\big((\beta-1)^{\frac{n-2\sigma}{2}}\big)
\]
for $\beta$ close to $1$, where
\[
A=2^{-\frac{n-2\sigma}{2}}\omega_{n-1}\int_0^{+\infty}\frac{2^nr^{n-1}}{(1+r^2)^{\frac{n+2\sigma}{2}}}dr>0.
\]
Choose a sufficiently small neighborhood $V_1$ of $\xi_1$ and let $V_2=\{x\in\Sn: -x\in V_1\}$. Then $K$ is positive in $V_1\cup V_2$ and
\begin{equation*}
\begin{split}
\int_{\mathbb{S}^n}K v_{\beta}^{\frac{2n}{n-2\sigma}}
&=\int_{\cup V_i}K\left(v_{1,\beta}+v_{2,\beta}\right)^{\frac{2n}{n-2\sigma}}+\int_{\mathbb{S}^n\backslash \cup V_i}Kv_{\beta}^{\frac{2n}{n-2\sigma}}\\
&=2\int_{V_1}K\left(v_{1,\beta}+v_{2,\beta}\right)^{\frac{2n}{n-2\sigma}}+\int_{\mathbb{S}^n\backslash \cup V_i}Kv_{\beta}^{\frac{2n}{n-2\sigma}}\\
&\geq 2\int_{V_1}K\left(v_{1,\beta}^{\frac{2n}{n-2\sigma}}+\frac{2n}{n-2\sigma}v_{1,\beta}^{\frac{n+2\sigma}{n-2\sigma}} v_{2,\beta}\right)+\int_{\mathbb{S}^n\backslash \cup V_i}Kv_{\beta}^{\frac{2n}{n-2\sigma}}.
\end{split}
\end{equation*}
Since $K(x)$ is flat of order $n-2\sigma$ at $\xi_1$, we have in $V_1$ that,
\[
K(x)=K(\xi_1)+o(1)|x-\xi_1|^{n-2\sigma}.
\]
Thus
\begin{equation*}
\begin{split}
\int_{\mathbb{S}^n}Kv_{\beta}^{\frac{2n}{n-2\sigma}}&\geq 2K(\xi_1)\int_{\Sn}v_{1,\beta}^{\frac{2n}{n-2\sigma}}+\frac{4nA}{n-2\sigma}K(\xi_1)(\beta-1)^{\frac{n-2\sigma}{2}}+o\big((\beta-1)^{\frac{n-2\sigma}{2}}\big)\\
&=2K(\xi_1)\omega_n\left(1+\frac{2nA}{n-2\sigma}\omega_n^{-1}(\beta-1)^{\frac{n-2\sigma}{2}}+o\big((\beta-1)^{\frac{n-2\sigma}{2}}\big)\right)
\end{split}
\end{equation*}
for $\beta$ close to $1$. Hence
\[
\frac{\int_{\mathbb{S}^n}v_{\beta}P_{\sigma}(v_{\beta})}{\left(\int_{\mathbb{S}^n}Kv_{\beta}^{\frac{2n}{n-2\sigma}}\right)^{\frac{n-2\sigma}{n}}}\leq
\frac{P_{\sigma}(1)\omega_n^{\frac{2\sigma}{n}}2^{\frac{2\sigma}{n}}}{K(\xi_1)^{\frac{n-2\sigma}{n}}}
\left(1-\frac{A}{\omega_n}(\beta-1)^{\frac{n-2\sigma}{2}}+o\big((\beta-1)^{\frac{n-2\sigma}{2}}\big)\right),
\]
which implies \eqref{less than1} holds.
\end{proof}

Theorem \ref{K-M-E-S} can be extended to positive functions $K$ which are invariant under some isometry group acting without fixed points (see \cite{Hebey, Ro}). Denote $Isom(\mathbb{S}^n)$ as the isometry group of the standard sphere $(\mathbb{S}^n, g_{\Sn})$.
Let $G$ be a subgroup of $Isom(\mathbb{S}^n)$. We say that $G$ acts without fixed points if for
each $x\in\mathbb{S}^n$, the orbit $O_{G}(x):=\{g(x)| g\in G\}$ has at least two elements.
We denote $|O_{G}(x)|$ be the number of elements in $O_{G}(x)$. A function $K$ is called $G$-invariant
if $K\circ g\equiv K$ for all $g\in G$.

\begin{thm}\label{invariance nirenberg}
Let $G$ be a finite subgroup of $Isom(\mathbb{S}^n)$ and act without fixed points.
Let $K\in C^{1,1}(\mathbb{S}^n)$ be a positive and G-invariant function.
If there exists $\xi_0\in\mathbb{S}^n$ such that  $K$ has flatness order greater than $n-2\sigma$ at $\xi_0$, and for any $x\in\mathbb{S}^n$
\begin{equation}\label{eq:condition 1}
\frac{K(\xi_0)}{|O_{G}(\xi_0)|^{\frac{2\sigma}{n-2\sigma}}}\geq \frac{K(x)}{|O_{G}(x)|^{\frac{2\sigma}{n-2\sigma}}},
\end{equation}
then \eqref{main equ} possesses a positive and G-invariant $C^{2}(\mathbb{S}^n)$ solution.
\end{thm}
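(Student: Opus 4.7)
The plan is to mimic the two-step argument for Theorem \ref{K-M-E-S}, replacing antipodal symmetry by $G$-invariance throughout. Let $H^{\sigma}_G$ be the subspace of $G$-invariant functions in $H^{\sigma}(\Sn)$, and set
\[
\lambda_{G}(K)=\inf_{v\in H^{\sigma}_{G}}\left\{\int_{\Sn} vP_{\sigma}(v):\int_{\Sn} K|v|^{\frac{2n}{n-2\sigma}}=1\right\}.
\]
I would prove: (i) whenever
\[
\lambda_G(K)<\frac{P_{\sigma}(1)\omega_n^{\frac{2\sigma}{n}}|O_G(\xi_0)|^{\frac{2\sigma}{n}}}{K(\xi_0)^{\frac{n-2\sigma}{n}}},
\]
\eqref{main equ} admits a positive $G$-invariant $C^2$ solution; and (ii) the flatness hypothesis at $\xi_0$ forces this strict inequality.

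For (i), minimize the subcritical functional on $H^\sigma_G$ as in Lemma \ref{lem of existence subcritical} to obtain $G$-invariant positive solutions $v_p$ of $P_\sigma(v_p)=\lambda_{G,p}(K) K v_p^p$. Letting $p\to \frac{n+2\sigma}{n-2\sigma}$, either the weak limit in $H^\sigma$ is nontrivial (and we are done by \eqref{description of P sigma}) or blow-up occurs at some $\bar x$. By $G$-invariance of $v_p$ each point of $O_G(\bar x)$ is also a blow-up point, and $G$-invariance of $K$ gives the same rescaled limit $u>0$ at each, solving $(-\Delta)^\sigma u=\lambda K(\bar x) u^{\frac{n+2\sigma}{n-2\sigma}}$. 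Summing the lower bound of $\int K v_p^{p+1}$ over disjoint neighborhoods of the $|O_G(\bar x)|$ orbit points, and absorbing the negative set of $K$ exactly as in Proposition \ref{less then exist1}, yields
\[
1\geq |O_G(\bar x)|K(\bar x)\int_{\R^n} u^{\frac{2n}{n-2\sigma}}+o(1).
\]
The sharp Sobolev inequality \eqref{pe1} applied to $u$ then forces $\lambda\geq P_\sigma(1)\omega_n^{\frac{2\sigma}{n}}|O_G(\bar x)|^{\frac{2\sigma}{n}}K(\bar x)^{-\frac{n-2\sigma}{n}}$. Raising \eqref{eq:condition 1} to the power $(n-2\sigma)/n$ rewrites it as $|O_G(\bar x)|^{\frac{2\sigma}{n}}/K(\bar x)^{\frac{n-2\sigma}{n}}\geq |O_G(\xi_0)|^{\frac{2\sigma}{n}}/K(\xi_0)^{\frac{n-2\sigma}{n}}$, contradicting the assumed strict bound on $\lambda_G(K)$.

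For (ii), I would use the $G$-symmetric test function
\[
v_\beta=\sum_{j=1}^N v_{\eta_j,\beta},\qquad \{\eta_1,\ldots,\eta_N\}=O_G(\xi_0),\ N=|O_G(\xi_0)|,
\]
where $v_{\eta_j,\beta}$ is the bubble of \eqref{eq:bubble} centered at $\eta_j$. Since $P_\sigma v_{\eta_j,\beta}=P_\sigma(1)v_{\eta_j,\beta}^{\frac{n+2\sigma}{n-2\sigma}}$ and $\int_{\Sn} v_{\eta_j,\beta}^{\frac{2n}{n-2\sigma}}=\omega_n$, one finds
\[
\int_{\Sn} v_\beta P_\sigma(v_\beta)=P_\sigma(1)\Big(N\omega_n+\sum_{j\neq k}\int_{\Sn} v_{\eta_j,\beta}^{\frac{n+2\sigma}{n-2\sigma}}v_{\eta_k,\beta}\Big),
\]
and each cross term is $A_{jk}(\beta-1)^{\frac{n-2\sigma}{2}}+o((\beta-1)^{\frac{n-2\sigma}{2}})$ with $A_{jk}>0$ depending only on $d(\eta_j,\eta_k)>0$. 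For the denominator I localize to mutually disjoint $G$-permuted neighborhoods $V_j$ of $\eta_j$, use
\[
\Big(\sum_k v_{\eta_k,\beta}\Big)^{\frac{2n}{n-2\sigma}}\geq v_{\eta_j,\beta}^{\frac{2n}{n-2\sigma}}+\frac{2n}{n-2\sigma}v_{\eta_j,\beta}^{\frac{n+2\sigma}{n-2\sigma}}\sum_{k\neq j}v_{\eta_k,\beta}\quad\mbox{in }V_j,
\]
and the flatness $K(x)=K(\xi_0)+o(d(x,\eta_j)^{n-2\sigma})$ at each $\eta_j$ (transported from $\xi_0$ by the isometry of $G$ that sends $\xi_0\mapsto \eta_j$). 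The resulting Rayleigh quotient has exactly the same structure as in Proposition \ref{less by test function1} with $2$ replaced by $N$, and the leading correction in $(\beta-1)^{\frac{n-2\sigma}{2}}$ is strictly negative.

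The principal obstacle is the cross-term bookkeeping: in the antipodal case there is only one pair of bubbles at distance $\pi$, whereas here there are $N(N-1)/2$ pairs with varying geodesic separations, so one must verify that the factor $\frac{2n}{n-2\sigma}>1$ between numerator and denominator cross-term contributions still produces a strictly negative net correction once summed over all pairs, and that the lower-order flatness error does not overwhelm the $(\beta-1)^{\frac{n-2\sigma}{2}}$-term. A secondary point is to propagate the flatness of $K$ from $\xi_0$ to each $\eta_j$, which follows from $K\circ g=K$ combined with the fact that every $g\in G$ is an isometry of $(\Sn,g_{\Sn})$.
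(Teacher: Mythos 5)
Your proposal is correct and follows exactly the strategy the paper sketches for Theorem \ref{invariance nirenberg}: the paper reduces it to Propositions \ref{less then exist} and \ref{less by test function}, with the latter proved by the $G$-orbit test function $v_{\beta}=\sum_{i=1}^{m}v_{i,\beta}$, $m=|O_G(\xi_1)|$, which is precisely your $v_\beta=\sum_{j=1}^N v_{\eta_j,\beta}$, and you correctly identify the role of \eqref{eq:condition 1} in transferring the test-function estimate at $\xi_0$ to a bound valid at any potential blow-up point $\bar x$. The concerns you flag are real but resolve the way you expect: the coefficients $A_{jk}$ are symmetric (self-adjointness of $P_\sigma$), each cross term in the denominator comes with the same $A_{jk}$ multiplied by $\frac{2n}{n-2\sigma}$, and after raising to the $\frac{n-2\sigma}{n}$ power this produces the net factor $1-2=-1$ in front of $\sum_{j\neq k}A_{jk}(\beta-1)^{\frac{n-2\sigma}{2}}$ pair by pair, so the correction is strictly negative regardless of the separations $d(\eta_j,\eta_k)$; and the flatness error contributes $o(1)(\beta-1)^{\frac{n-2\sigma}{2}}$ exactly as in the antipodal case. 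One small simplification you could make: since $K$ is assumed strictly positive in this theorem (unlike Theorem \ref{K-M-E-S}), the step where you ``absorb the negative set of $K$'' is vacuous here.
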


Let $H^{\sigma}_{G}$ be the set of G-invariant functions in $H^{\sigma}(\mathbb{S}^n)$. Let
\[
\lambda_{G}(K)=\inf_{v\in H^{\sigma}_{G}}\left\{\int_{\mathbb{S}^n}vP_{\sigma}(v): \int_{\mathbb{S}^n}K|v|^{\frac{2n}{n-2\sigma}}=1\right\}.
\]
Similar to Theorem \ref{K-M-E-S}, the proof of Theorem \ref{invariance nirenberg} is again divided into two steps.

\begin{prop}\label{less then exist}
Let $G$ be a finite subgroup of $Isom(\mathbb{S}^n)$. Let $K\in C^{1,1}(\mathbb{S}^n)$ be a positive and G-invariant function. If for all $x\in\mathbb{S}^n$,
\begin{equation}\label{eq:less}
\lambda_{G}(K)<\frac{P_{\sigma}(1)\omega_n^{\frac{2\sigma}{n}}|O_{G}(x)|^{\frac{2\sigma}{n}}}{K(x)^{\frac{n-2\sigma}{n}}},
\end{equation}
then there exists a positive G-invariant $C^{2}(\mathbb{S}^n)$ solution of \eqref{main equ}.
\end{prop}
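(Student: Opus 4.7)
The plan is to imitate the proof of Proposition \ref{less then exist1}, replacing the antipodal symmetry and the corresponding factor $2$ by the $G$-action and the factor $|O_G(\bar x)|$. First I would introduce the subcritical quantity
\[
\lda_{G,p}(K)=\inf_{v\in H^{\sigma}_{G}}\left\{\int_{\Sn} vP_\sigma(v):\int_{\Sn} K|v|^{p+1}=1\right\}
\]
for $1<p<(n+2\sigma)/(n-2\sigma)$, and establish the analogue of Lemma \ref{lem of existence subcritical}: by the direct method applied in $H^{\sigma}_G$ (which is weakly closed in $H^\sigma(\Sn)$ since $G$ is a finite isometry group), together with $\int |v|P_\sigma(|v|)\le\int vP_\sigma(v)$ and the regularity/positivity results of \cite{JLX}, $\lda_{G,p}(K)$ is attained by a positive $G$-invariant $C^2$ minimizer $v_p$ satisfying $P_\sigma(v_p)=\lda_{G,p}(K)Kv_p^p$, $\int_{\Sn}Kv_p^{p+1}=1$. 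The usual test-function comparison gives $\limsup_{p\to(n+2\sigma)/(n-2\sigma)}\lda_{G,p}(K)\le\lda_G(K)$.

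Next I would pass to the limit along a sequence $p_i\to(n+2\sigma)/(n-2\sigma)$ with $\lda_{G,p_i}(K)\to\lda\le\lda_G(K)$ and $v_{p_i}\rightharpoonup v$ in $H^\sigma(\Sn)$. If $v\not\equiv 0$ then $v$ is $G$-invariant, solves the critical equation weakly, and is strictly positive by \eqref{description of P sigma}, so an appropriate rescaling produces the desired solution of \eqref{main equ}. Otherwise $v\equiv 0$, and the argument from the proof of Proposition \ref{less then exist1} forces $\max_{\Sn}v_{p_i}\to\infty$ at some point $x_i\to\bar x$. Rescaling via stereographic projection around $\bar x$ by the factor $m_i^{(1-p_i)/(2\sigma)}$ as in \eqref{eq:subcritical in plane1}--\eqref{eq:scaled subcritical in plane} yields, up to subsequence, a nonnegative nontrivial $u\in\dot H^{\sigma}(\R^n)\cap C^2(\R^n)$ with $u(0)=1$ solving
\[
(-\Delta)^\sigma u=\lda K(\bar x)u^{(n+2\sigma)/(n-2\sigma)}\quad\text{on }\R^n.
\]

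The essential new ingredient is the use of $G$-invariance to recover a factor $|O_G(\bar x)|$ instead of $2$. Since both $K$ and $v_{p_i}$ are $G$-invariant, concentration occurs simultaneously at every point of the orbit $O_G(\bar x)=\{g_1\bar x,\dots,g_N\bar x\}$ with $N=|O_G(\bar x)|$, and the blow-up profiles at each orbit point are congruent copies of $u$. Choosing disjoint geodesic neighborhoods $\mathcal{B}(g_k\bar x,\delta)$ (possible because $G$ acts without accumulation on a single point, $N$ being finite) and arguing as in the derivation of \eqref{norm less1}—now with the $\{K<0\}$ term absent since $K>0$—I obtain
\[
1=\int_{\Sn}Kv_{p_i}^{p_i+1}\ge N\,K(\bar x)\int_{\R^n}u^{2n/(n-2\sigma)}+o(1).
\]
Combining this with the sharp Sobolev inequality \eqref{eq:sharp on rn} applied to $u$ and the identity $\lda K(\bar x)\int_{\R^n}u^{2n/(n-2\sigma)}=\int_{\R^n}u(-\Delta)^\sigma u$ gives
\[
P_\sigma(1)\omega_n^{2\sigma/n}\le\lda K(\bar x)\left(\int_{\R^n}u^{2n/(n-2\sigma)}\right)^{2\sigma/n}\le\lda_G(K)\,K(\bar x)^{(n-2\sigma)/n}N^{-2\sigma/n},
\]
which contradicts \eqref{eq:less} taken at $x=\bar x$. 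The main technical point — and the only place requiring care beyond Proposition \ref{less then exist1} — is verifying that the orbit points remain asymptotically separated so that the $N$ concentration energies add up; this follows from the finiteness of $G$ and the fact that $G$ acts by isometries, so $\min_{k\ne\ell}d(g_k\bar x,g_\ell\bar x)>0$.
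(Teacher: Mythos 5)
Your proposal is correct and follows exactly the route the paper indicates: the authors explicitly say that the proof of Proposition~\ref{less then exist} "uses subcritical approximations and blow up analysis, which is similar to that of Proposition~\ref{less then exist1}" and leave the details to the reader, and your argument carries this out with the antipodal pair replaced by the $G$-orbit of the blow-up point and the factor $2$ replaced by $|O_G(\bar x)|$. The observation that the $\{K<0\}$ error term disappears here because $K>0$, and that the orbit points of the finite isometry group stay uniformly separated so the $|O_G(\bar x)|$ concentration contributions add, are precisely the adaptations needed.
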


\begin{prop}\label{less by test function}
Let $G$ be a finite subgroup of $Isom(\mathbb{S}^n)$ and act without fixed points.
Let $K\in C^{1,1}(\mathbb{S}^n)$ be a positive and G-invariant function. If $K$ has flatness order greater than $n-2\sigma$ at $\xi_1$ for some $\xi_1\in\mathbb{S}^n$, then
\begin{equation}\label{less than}
\lambda_{G}(K)<\frac{P_{\sigma}(1)\omega_n^{\frac{2\sigma}{n}}|O_{G}(\xi_1)|^{\frac{2\sigma}{n}}}{K(\xi_1)^{\frac{n-2\sigma}{n}}}.
\end{equation}
\end{prop}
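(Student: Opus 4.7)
The plan is to adapt the proof of Proposition~\ref{less by test function1}, replacing the antipodal pair $\{\xi_1,-\xi_1\}$ by the full orbit $O_G(\xi_1)=\{\xi_1,\ldots,\xi_m\}$, where $m=|O_G(\xi_1)|\ge 2$. Since $G$ acts by isometries and $K$ is $G$-invariant, $K(\xi_i)=K(\xi_1)$ for every $i$ and the flatness expansion $K(x)=K(\xi_i)+o(d(x,\xi_i)^{n-2\sigma})$ transports to every orbit point. As test function I would take
\[
v_\beta=\sum_{i=1}^m v_{i,\beta},
\]
where $v_{i,\beta}$ is the bubble \eqref{eq:bubble} centered at $\xi_i$; because $G$ permutes the $\xi_i$ and each bubble is a radial function of the geodesic distance from its center, $v_\beta$ lies in $H^\sigma_G$ and is admissible for $\lambda_G(K)$.

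For the numerator I would use $P_\sigma(v_{i,\beta})=P_\sigma(1)v_{i,\beta}^{(n+2\sigma)/(n-2\sigma)}$ and $\int v_{i,\beta}^{2n/(n-2\sigma)}=\omega_n$ to write
\[
\int_{\Sn}v_\beta P_\sigma(v_\beta)=P_\sigma(1)\Big(m\omega_n+\sum_{i\ne j}\int_{\Sn}v_{i,\beta}^{(n+2\sigma)/(n-2\sigma)}v_{j,\beta}\Big),
\]
and handle each cross integral by the same concentration argument used in Proposition~\ref{less by test function1}: $v_{i,\beta}^{(n+2\sigma)/(n-2\sigma)}$ concentrates at $\xi_i$, while $v_{j,\beta}$ is smooth there of size $\sim(\beta-1)^{(n-2\sigma)/4}$ governed by $d(\xi_i,\xi_j)$, producing an expansion $A_{ij}(\beta-1)^{(n-2\sigma)/2}+o((\beta-1)^{(n-2\sigma)/2})$ with $A_{ij}=A_{ji}>0$. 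Writing $S:=\sum_{i\ne j}A_{ij}>0$, the numerator becomes
\[
P_\sigma(1)m\omega_n\Big(1+\tfrac{S}{m\omega_n}(\beta-1)^{(n-2\sigma)/2}+o\big((\beta-1)^{(n-2\sigma)/2}\big)\Big).
\]

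For the denominator I would fix pairwise disjoint $G$-permuted neighborhoods $V_i\ni\xi_i$, discard the nonnegative contribution on $\Sn\setminus\bigcup V_i$ (since $K>0$), and on each $V_i$ apply the convexity inequality $(a+b)^{2n/(n-2\sigma)}\ge a^{2n/(n-2\sigma)}+\tfrac{2n}{n-2\sigma}a^{(n+2\sigma)/(n-2\sigma)}b$ with $a=v_{i,\beta}$, $b=\sum_{j\ne i}v_{j,\beta}$, combined with the flatness expansion of $K$ at $\xi_i$. Repeating the calculation of Proposition~\ref{less by test function1} at each orbit point yields
\[
\int_{\Sn}Kv_\beta^{2n/(n-2\sigma)}\ge K(\xi_1)m\omega_n\Big(1+\tfrac{2nS}{(n-2\sigma)m\omega_n}(\beta-1)^{(n-2\sigma)/2}+o\big((\beta-1)^{(n-2\sigma)/2}\big)\Big).
\]
Raising to the $(n-2\sigma)/n$ power and dividing, the coefficient of $(\beta-1)^{(n-2\sigma)/2}$ in the Rayleigh-type quotient is $\tfrac{S}{m\omega_n}-\tfrac{n-2\sigma}{n}\cdot\tfrac{2nS}{(n-2\sigma)m\omega_n}=-\tfrac{S}{m\omega_n}<0$, so for $\beta$ close to $1$,
\[
\lambda_G(K)\le\frac{\int v_\beta P_\sigma(v_\beta)}{(\int Kv_\beta^{2n/(n-2\sigma)})^{(n-2\sigma)/n}}<\frac{P_\sigma(1)(m\omega_n)^{2\sigma/n}}{K(\xi_1)^{(n-2\sigma)/n}},
\]
which is \eqref{less than}. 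The main obstacle I anticipate is the cross-term bookkeeping: verifying that each $A_{ij}>0$ with a constant depending only on $d(\xi_i,\xi_j)\in(0,\pi]$ (so the expansion is uniform across the finitely many pairs), and checking that the factor $\tfrac{n-2\sigma}{n}\cdot\tfrac{2n}{n-2\sigma}=2$ produced by combining the convexity inequality with the outer exponent delivers precisely the cancellation needed to defeat the positive cross-term contribution in the numerator.
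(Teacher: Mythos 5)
Your proposal uses exactly the $G$-invariant test function $v_\beta=\sum_{i=1}^m v_{i,\beta}$ that the paper indicates (the paper sketches this and defers details to the reader, citing the analogy with Proposition~\ref{less by test function1}), and you carry out the numerator/denominator expansions in the same way as the antipodal case, getting the correct coefficient cancellation $\tfrac{S}{m\omega_n}-2\cdot\tfrac{S}{m\omega_n}=-\tfrac{S}{m\omega_n}<0$. This matches the paper's intended argument.
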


Theorem \ref{invariance nirenberg} follows from Proposition \ref{less then exist} and Proposition \ref{less by test function} immediately. The proof of Proposition \ref{less then exist} uses subcritical approximations and blow up analysis, which is similar to that of Proposition \ref{less then exist1}. Proposition \ref{less by test function} can be verified by the following G-invariant test function
\[
v_{\beta}=\sum\limits_{i=1}^m v_{i,\beta},
\]
where $m=|O_G(\xi_1)|$, $O_{G}(\xi_1)=\{\xi_1,\dots,\xi_m\}$, $\xi_i=g_i(\xi_1)$ for some $g_i\in G$, $g_1=Id$, $v_{j,\beta}:=v_{1,\beta}\circ g_i^{-1}$ and $v_{1,\beta}$ is as in \eqref{eq:bubble}. We omit the detailed proofs of Propositions \ref{less then exist} and \ref{less by test function}, and leave them to the readers.

\section{Proof of Theorem \ref{general exist}}
\label{proof of general exist}

In this section, we first establish a perturbation result, see Theorem \ref{perturbation}. The method we shall use is smilar to that in \cite{Li95},
but we have to set up a framework to fit the fractional situation. Perturbation results in the classical Nirenberg problem
were obtained in \cite{CY87}, \cite{CY}, \cite{Li95} and many others.

For a conformal transformation $\varphi:\mathbb{S}^n\to \mathbb{S}^n$, we set
\[
 T_{\varphi}v=v\circ\varphi|\det d\varphi|^{(n-2\sigma)/2n},
\]
where $|\det d\varphi|$ denotes the Jacobian of $\varphi$ satisfying
\[
 \varphi^*g_{\Sn}=|\det d\varphi|^{2/n}g_{\Sn}.
\]

\begin{lem}\label{lempe1}
For any conformal transform $\varphi$ on $\Sn$ we have
\[
 \int_{\mathbb{S}^n}T_{\varphi}vP_\sigma(T_{\varphi}v)\,\ud vol_{g_{\Sn}}=\int_{\mathbb{S}^n} vP_\sigma(v)\,\ud vol_{g_{\Sn}}
\]
and
\[
 \int_{\mathbb{S}^n}|T_{\varphi}v|^{\frac{2n}{n-2\sigma}}\,\ud vol_{g_{\Sn}}=\int_{\mathbb{S}^n}|v|^{\frac{2n}{n-2\sigma}}\,\ud vol_{g_{\Sn}}
\]for all $v\in H^\sigma(\mathbb{S}^n)$.
\end{lem}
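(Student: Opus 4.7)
The lemma asserts the conformal invariance of the $H^\sigma(\Sn)$ Dirichlet form and of the $L^{\frac{2n}{n-2\sigma}}(\Sn)$ norm under the weighted composition $T_\varphi$. My plan is to reduce both to the change of variables formula on $\Sn$ --- which in view of $\varphi^*g_{\Sn}=|\det d\varphi|^{2/n}g_{\Sn}$ reads $\int_{\Sn}f\circ\varphi\,|\det d\varphi|\,\ud vol_{g_{\Sn}}=\int_{\Sn}f\,\ud vol_{g_{\Sn}}$ --- together with the conformal covariance of the operator $P_\sigma$.

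The second identity is immediate. The exponent $\frac{n-2\sigma}{2n}$ in the definition of $T_\varphi$ is precisely chosen so that $\frac{n-2\sigma}{2n}\cdot\frac{2n}{n-2\sigma}=1$, giving $|T_\varphi v|^{\frac{2n}{n-2\sigma}}=|v\circ\varphi|^{\frac{2n}{n-2\sigma}}|\det d\varphi|$, and one change of variables yields the desired equality.

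For the first identity the main step is to establish the conformal covariance
\[
P_\sigma(T_\varphi v)=\bigl[(P_\sigma v)\circ\varphi\bigr]\cdot|\det d\varphi|^{(n+2\sigma)/(2n)},
\]
which is the dual version of the definition of $T_\varphi$ and uses the exponent $\frac{n+2\sigma}{2n}$ appearing in the stereographic formula of the introduction. The strategy is to pass to $\R^n$ via that stereographic formula: setting $\Phi:=F^{-1}\circ\varphi\circ F$, a M\"obius transformation of $\R^n$, I would apply the stereographic formula both to $v$ and to $T_\varphi v$, and then invoke the standard M\"obius conformal covariance of $(-\Delta)^\sigma$,
\[
(-\Delta)^\sigma\bigl(u\circ\Phi\cdot|J_\Phi|^{(n-2\sigma)/(2n)}\bigr)=\bigl[((-\Delta)^\sigma u)\circ\Phi\bigr]\cdot|J_\Phi|^{(n+2\sigma)/(2n)},
\]
applied to $u:=|J_F|^{(n-2\sigma)/(2n)}(v\circ F)$. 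The Jacobians $|J_F|$, $|J_\Phi|$ and $|\det d\varphi|$ then intertwine through the chain-rule identity $|J_F|\cdot|\det d\varphi|\circ F=|J_F\circ\Phi|\cdot|J_\Phi|$ (which follows from $\varphi\circ F=F\circ\Phi$), and after cancellation the covariance above drops out.

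Once the covariance is available, the first identity is a one-line consequence. Pairing $T_\varphi v$ with $P_\sigma(T_\varphi v)$ combines the two weights into $|\det d\varphi|^{(n-2\sigma)/(2n)+(n+2\sigma)/(2n)}=|\det d\varphi|$, which is exactly the Jacobian needed for a final change of variables, so
\[
\int_{\Sn}T_\varphi v\,P_\sigma(T_\varphi v)\,\ud vol_{g_{\Sn}}=\int_{\Sn}(v\,P_\sigma v)\circ\varphi\,|\det d\varphi|\,\ud vol_{g_{\Sn}}=\int_{\Sn}v\,P_\sigma v\,\ud vol_{g_{\Sn}}.
\]
The main obstacle is therefore the covariance step; the rest is bookkeeping of exponents. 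A standard density argument extends both identities from $C^2(\Sn)$, where the pointwise stereographic formula directly applies, to general $v\in H^\sigma(\Sn)$, completing the proof.
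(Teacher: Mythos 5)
Your proposal is correct, but it takes a genuinely different route from the paper's own proof. Both arguments boil down to the pointwise covariance identity $P_\sigma(T_\varphi v)=[(P_\sigma v)\circ\varphi]\,|\det d\varphi|^{\frac{n+2\sigma}{2n}}$, after which the pairing with $T_\varphi v$ produces the Jacobian $|\det d\varphi|$ and a single change of variables finishes the job, exactly as you say. The difference is in how this covariance is obtained. The paper works intrinsically on $\Sn$: it first uses diffeomorphism naturality of the conformally covariant family $P_\sigma^g$ to write $\int v P_\sigma v\,\ud vol_{g_{\Sn}}=\int (v\circ\varphi)P_\sigma^{\varphi^*g_{\Sn}}(v\circ\varphi)\,\ud vol_{\varphi^*g_{\Sn}}$, then uses the conformal covariance of $P_\sigma^g$ (Branson) under the conformal change $\varphi^*g_{\Sn}=|\det d\varphi|^{2/n}g_{\Sn}$ to push everything back to $P_\sigma^{g_{\Sn}}$; this is a four-line computation once that abstract covariance is granted. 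You instead transport the problem to $\R^n$ via stereographic projection, invoke the M\"obius covariance of $(-\Delta)^\sigma$, and then intertwine the three Jacobians $|J_F|$, $|J_\Phi|$, $|\det d\varphi|$ through the chain rule. Both routes are legitimate and use comparably deep inputs: the paper's abstract conformal covariance of $P_\sigma^g$ is the same fact that underlies the stereographic formula you rely on, and your extra ingredient (M\"obius covariance of $(-\Delta)^\sigma$) must itself be checked on the generators of the M\"obius group, with the inversion requiring a genuine kernel computation. The paper's version is shorter and stays on the sphere; yours is more explicit and perhaps more transparent to a reader comfortable with $(-\Delta)^\sigma$ on flat space but not with the GJMS/Branson formalism. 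Your density argument at the end is a reasonable precaution for $v\in H^\sigma(\Sn)$; the paper elides it.
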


\begin{proof} We only prove the first equality.
 Recall that $P_\sigma=P_\sigma^{g_{\Sn}}$. By the conformal invariance of $P_\sigma^g$,
\[
 \begin{split}
  \int_{\mathbb{S}^n} vP_\sigma(v)\,\ud vol_{g_{\Sn}}&=\int_{\mathbb{S}^n} v \circ \varphi P^{\varphi^*g_{\Sn}}_\sigma(v\circ \varphi)\,\ud vol_{\varphi^*g_{\Sn}}\\&
=\int_{\mathbb{S}^n} v \circ \varphi P^{|\det \varphi|^{2/n}g_{\Sn}}_\sigma(v\circ \varphi)|\det \varphi|\,\ud vol_{g_{\Sn}}\\&
=\int_{\mathbb{S}^n} v \circ \varphi |\det \varphi|^{-\frac{n+2\sigma}{2n}} P^{g_{\Sn}}_\sigma(v\circ \varphi |\det \varphi|^{\frac{n-2\sigma}{2n}})|\det \varphi|\,\ud vol_{g_{\Sn}}\\
&= \int_{\mathbb{S}^n}T_{\varphi}vP_\sigma(T_{\varphi}v)\,\ud vol_{g_{\Sn}}.
 \end{split}
\]
\end{proof}
For $P\in \mathbb{S}^n$, $1\leq t<\infty$, we recall a conformal transform (see, e.g., \cite{CY87})
\be\label{phi}
 \varphi_{P,t}:\mathbb{S}^n\to \mathbb{S}^n,\quad y\mapsto t y,
\ee
where $y$ is the stereographic projection coordinates of points on $\mathbb{S}^n$ while the stereographic projection is performed with $P$
as the north pole to the equatorial plane of $\mathbb{S}^n$. The totality of such a set of conformal transforms is diffeomorphic to the unit ball
$B^{n+1}$ in $\R^{n+1}$, with the identity transformation identified with the origin in $B^{n+1}$ and
\[\varphi_{P,t}\leftrightarrow ((t-1)/t)P=:p\in B^{n+1}\]
in general. We denote $\varphi_p=\varphi_{P,t}$.
Let
\[
 \M=\left\{v \in H^{\sigma}(\mathbb{S}^n):\dashint_{\mathbb{S}^n}|v|^{\frac{2n}{n-2\sigma}}\,\ud vol_{g_{\Sn}}=1\right\},
\]
\[
 \M_0=\left\{v\in \M:\dashint_{\mathbb{S}^n} x|v|^{\frac{2n}{n-2\sigma}}\,\ud vol_{g_{\Sn}}=0\right\}.
\]
Define
\[
 \varpi:\M_0\times B^{n+1}\to \M
\]
by
\[
 v=\varpi(w,p)=T^{-1}_{\varphi_p} w,\quad w\in \M_0.
\]

\begin{lem} \label{lem c2 diff}
$\varpi: \M_0\times B_1\to \M$ is a $C^2$ diffeomorphism.
\end{lem}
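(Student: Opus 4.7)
The plan is to show $\varpi$ is $C^2$, bijective, and has $C^2$ inverse, the last via the inverse function theorem. Lemma \ref{lempe1} identifies $T_{\varphi_p}$ as an ``isometry'' of $\M$ in the $L^{\frac{2n}{n-2\sigma}}$-sense, so $\varpi$ is essentially the parametrization of $\M$ by a slice $\M_0$ transverse to the $(n+1)$-parameter family $\{\varphi_p\}_{p\in B^{n+1}}$ of conformal transformations of $\Sn$.

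First I would verify that $\varpi$ is $C^\infty$ as a map into $H^\sigma(\Sn)$. Since $p\mapsto \varphi_p$ is a smooth parametrization of conformal diffeomorphisms of $\Sn$, and $T_{\varphi_p}^{-1}w = w\circ\varphi_p \cdot |\det d\varphi_p|^{\frac{n-2\sigma}{2n}}$, smoothness follows from standard composition and Jacobian estimates, and Lemma \ref{lempe1} ensures that the image lies in $\M$.

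For bijectivity, fix $v\in\M$ and define
\[
\Phi_v(p) := \int_{\Sn} x\,|T_{\varphi_p}v|^{\frac{2n}{n-2\sigma}}\,\ud vol_{g_{\Sn}} \in \R^{n+1}.
\]
By Lemma \ref{lempe1}, $\omega_n^{-1}|T_{\varphi_p}v|^{\frac{2n}{n-2\sigma}}\,\ud vol_{g_{\Sn}}$ is a probability measure on $\Sn\subset\R^{n+1}$, and $\Phi_v(p)=0$ is equivalent to $T_{\varphi_p}v\in\M_0$. Thus surjectivity and injectivity of $\varpi$ correspond respectively to existence and uniqueness of a zero of $\Phi_v$ in $B^{n+1}$. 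As $|p|\to 1$, $\varphi_p$ concentrates this measure at $p/|p|\in\Sn$, so $\Phi_v$ extends continuously to $\overline{B^{n+1}}$ with boundary value the identity, and a Brouwer-degree argument yields an interior zero; uniqueness then follows by the hyperbolic-convexity argument used for $\sigma=1$ in \cite{CY87, Li95}, which depends only on the mass distribution of $|v|^{\frac{2n}{n-2\sigma}}\,\ud vol_{g_{\Sn}}$ rather than on any derivative information for $v$.

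Finally, I would invoke the inverse function theorem at each $(w_0,p_0)$. By equivariance under $T_{\varphi_{p_0}}$, it suffices to check nondegeneracy of $d\varpi$ at $p_0=0$, where $\varphi_0=\mathrm{Id}$: the tangent space $T_{w_0}\M_0$ has codimension $n+1$ in $T_{w_0}\M$, and the $n+1$ complementary directions are supplied by the infinitesimal generators $\partial_{p_i}\varphi_p|_{p=0}$ acting on $w_0$. Nondegeneracy of this pairing is precisely the Jacobian of $\Phi_v$ at the unique zero found above. I expect the main obstacle to be establishing the boundary behavior and uniqueness of the zero of $\Phi_v$ uniformly in $v\in\M$; the remaining smoothness and linearization calculations are routine adaptations of the arguments used in the classical Nirenberg problem.
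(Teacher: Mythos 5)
Your proposal is correct and takes essentially the same route as the paper, whose proof is just a citation to Lemma 5.4 of \cite{Li95}: the center-of-mass map $\Phi_v$, a Brouwer-degree argument for existence of a zero, hyperbolic-convexity for uniqueness (depending only on the probability density $|v|^{2n/(n-2\sigma)}$, hence carrying over verbatim to the fractional setting), and the inverse function theorem. One minor slip: since $T_\varphi T_\psi = T_{\psi\circ\varphi}$, one has $T_{\varphi_p}^{-1}=T_{\varphi_p^{-1}}$, so $T_{\varphi_p}^{-1}w = w\circ\varphi_p^{-1}\,|\det d\varphi_p^{-1}|^{(n-2\sigma)/(2n)}$ rather than the formula you wrote (which is $T_{\varphi_p}w$); this does not affect the smoothness argument.
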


\begin{proof}
The proof is the same as that of Lemma 5.4 in \cite{Li95}.
\end{proof}

Consider the following functional on $\M$
\[
 E_{K}(v)=\frac{\dashint_{\mathbb{S}^n}vP_\sigma(v)\,\ud vol_{g_{\Sn}}}{\left(\dashint_{\mathbb{S}^n}K|v|^{2n/(n-2\sigma)}\,\ud vol_{g_{\Sn}}\right)^{(n-2\sigma)/n}},
\]
where $K>0$.
By \eqref{pe1} we see that the functional $E_K$ has a lower bound over $\M$ provided $K$ is positive and bounded.
By Lemma \ref{lempe1}, $E_K$ is in fact defined on $\M_0$ when $K$ is a constant.
Due to the classification of extremas of \eqref{pe1},
$\min_{w\in \M_0}E_1=P_{\sigma}(1)$ is achieved only by $-1$ and $1$.

Note that both $\M$ and $\M_0$ are $C^2$ surfaces in the Hilbert space $H^\sigma(\Sn)$.

\begin{lem}\label{lempe2}
Let $T_1\M$ denote the tangent space of $\M$ at $v=1$, then we have
\[
\begin{split}
 T_1\M&=\left\{\phi: \int_{\mathbb{S}^n}\phi=0\right\}\\&
=\mathrm{span}\{\mbox{spherical harmonics of degree }\geq 1\}.
\end{split}
\]
\end{lem}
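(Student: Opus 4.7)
The plan is to realize $\M$ as a smooth level set of a constraint functional near $v=1$, apply the implicit function theorem to read off the tangent space, and then use the spectral decomposition on $\Sn$ for the second identification.

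Concretely, define $F: H^\sigma(\Sn)\to \R$ by $F(v) = \dashint_{\Sn}|v|^{2n/(n-2\sigma)}\,\ud vol_{g_{\Sn}}$, so that $\M = F^{-1}(1)$. In a neighborhood of $v=1$ we may drop the absolute value, and the continuous embedding $H^\sigma(\Sn)\hookrightarrow L^{2n/(n-2\sigma)}(\Sn)$ from \eqref{pe1}, together with a standard Nemytskii/dominated convergence argument, shows that $F$ is $C^2$ near $v=1$ with Fr\'echet derivative
\[
dF(1)[\phi] = \frac{2n}{(n-2\sigma)|\Sn|}\int_{\Sn}\phi\,\ud vol_{g_{\Sn}}.
\]
Testing against $\phi\equiv 1$ shows $dF(1)\neq 0$, so the derivative is surjective onto $\R$. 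The implicit function theorem then identifies $\M$ as a $C^2$ codimension-one submanifold of $H^\sigma(\Sn)$ near $v=1$, with tangent space
\[
T_1\M = \ker dF(1) = \left\{\phi\in H^\sigma(\Sn):\, \int_{\Sn}\phi\,\ud vol_{g_{\Sn}}=0\right\},
\]
which is the first claimed equality.

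For the second equality, I would invoke the spectral decomposition on the sphere: $L^2(\Sn)$ decomposes orthogonally as $\bigoplus_{k\geq 0}\mathscr{H}_k$, where $\mathscr{H}_k$ is the space of spherical harmonics of degree $k$ and $\mathscr{H}_0$ is the constants. From \eqref{P sigma}, the operator $P_\sigma$ acts on $\mathscr{H}_k$ by multiplication by a positive eigenvalue $\lambda_k$, so the $H^\sigma(\Sn)$ norm is equivalent to $\sum_{k\geq 0}\lambda_k \|\pi_k v\|_{L^2}^2$, giving $H^\sigma(\Sn) = \bigoplus_{k\geq 0}\mathscr{H}_k$ as a Hilbert space, where $\pi_k$ denotes the $L^2$-projection onto $\mathscr{H}_k$. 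Since the condition $\int_{\Sn}\phi\,\ud vol_{g_{\Sn}} = 0$ is exactly the vanishing of $\pi_0\phi$, the kernel above coincides with $\bigoplus_{k\geq 1}\mathscr{H}_k$, yielding the second equality.

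The only delicate point is the $C^2$ smoothness of $F$ on $H^\sigma(\Sn)$, which flirts with the borderline critical exponent via \eqref{pe1}; however, since we only need smoothness in a neighborhood of the constant $v=1$, where $v$ is bounded away from zero and the power function is analytic, no sharp endpoint estimate is required and the verification is routine.
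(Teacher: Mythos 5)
Your argument is correct and follows essentially the route the paper alludes to ("direct computations and elementary properties of spherical harmonics"): the constraint functional $F$ has surjective derivative at $v=1$, so $T_1\M=\ker dF(1)$ is the mean-zero subspace of $H^\sigma(\Sn)$, which by the spectral decomposition of $P_\sigma$ is the closed span of spherical harmonics of degree $\geq 1$. One small caution on your closing remark: since $\sigma<n/2$ here, an $H^\sigma(\Sn)$-neighborhood of the constant $1$ gives no pointwise control, so one cannot argue that nearby $v$ are "bounded away from zero"; the $C^2$ regularity of $F$ at the critical exponent must instead be checked directly via Nemytskii-operator estimates for the map $v\mapsto |v|^{p-2}$ (or, more economically, one can note that only $C^1$ regularity of $F$ is needed to read off the tangent space via the implicit function theorem, and that is unproblematic).
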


\begin{lem}\label{lempe3}
 Let $T_1\M_0$ denote the tangent space of $\M_0$ at $v=1$, then we have
\[
 T_1\M_0=\mathrm{span}\{\mbox{spherical harmonics of degree } \geq 2\}.
\]
\end{lem}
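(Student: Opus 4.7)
The plan is to compute $T_1\M_0$ by differentiating the defining constraints of $\M_0$ at the constant function $1$, and then to match the resulting orthogonality conditions with the usual $L^2$ spherical harmonic decomposition on $\Sn$.

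First, given any smooth curve $v(t)$ in $\M_0$ with $v(0)=1$ and $v'(0)=\phi\in H^\sigma(\Sn)$, I differentiate the two defining relations
\[
\dashint_{\Sn}|v(t)|^{\frac{2n}{n-2\sigma}}\,\ud vol_{g_{\Sn}}=1,\qquad \dashint_{\Sn} x\,|v(t)|^{\frac{2n}{n-2\sigma}}\,\ud vol_{g_{\Sn}}=0,
\]
at $t=0$. Using $|v|^{\frac{2n}{n-2\sigma}}=1+\frac{2n}{n-2\sigma}(v-1)+O((v-1)^2)$ near $v=1$, the first equality reproduces the condition $\int_{\Sn}\phi=0$ from Lemma \ref{lempe2}, while the second, together with $\int_{\Sn}x\,\ud vol_{g_{\Sn}}=0$, yields the vector-valued condition $\int_{\Sn}x_i\,\phi\,\ud vol_{g_{\Sn}}=0$ for each coordinate function $x_i$ of the embedding $\Sn\hookrightarrow \R^{n+1}$.

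Second, I would recall that the coordinate functions $x_1,\ldots,x_{n+1}$ on $\Sn$ form a basis of the eigenspace of $-\Delta_{g_{\Sn}}$ with eigenvalue $n$, i.e.\ precisely the space of spherical harmonics of degree $1$, while the constants form the degree $0$ eigenspace. Consequently the conditions $\int_{\Sn}\phi=0$ and $\int_{\Sn}x_i\phi=0$ for $i=1,\dots,n+1$ are exactly the statement that $\phi$ is $L^2$-orthogonal to all spherical harmonics of degree $\leq 1$, i.e.\ $\phi$ lies in the (closed) span of spherical harmonics of degree $\geq 2$. This gives the inclusion $T_1\M_0 \subset \mathrm{span}\{\mbox{spherical harmonics of degree}\geq 2\}$.

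Third, for the reverse inclusion, I would invoke the implicit function theorem applied to the constraint map $\Phi:H^\sigma(\Sn)\to \R^{n+2}$ defined by $\Phi(v)=\bigl(\dashint |v|^{\frac{2n}{n-2\sigma}}-1,\ \dashint x\,|v|^{\frac{2n}{n-2\sigma}}\bigr)$. Its differential at $v=1$ sends $\phi$ to $\frac{2n}{n-2\sigma}\bigl(\dashint\phi,\ \dashint x\phi\bigr)$, and this map is surjective onto $\R^{n+2}$ since the test functions $1,x_1,\ldots,x_{n+1}$ are linearly independent in $L^2(\Sn)$. So $\M_0$ is a genuine codimension-$(n+2)$ $C^2$-submanifold near $1$, and every $\phi$ in the orthogonal complement of $\mathrm{span}\{1,x_1,\ldots,x_{n+1}\}$ is realized as the derivative at $t=0$ of a curve in $\M_0$, finishing the proof.

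The calculation is essentially routine; the only mild point to be careful about is the surjectivity of the differential of $\Phi$, which is what lets us conclude equality of the two subspaces and not merely inclusion. Everything else is a direct linearization coupled with standard facts about the spherical harmonic decomposition on $\Sn$.
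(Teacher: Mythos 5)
Your proof is correct and carries out exactly the "direct computations and elementary properties of spherical harmonics" that the paper invokes without writing out: linearize the two constraints at $v=1$, identify the resulting orthogonality conditions with orthogonality to degree-$0$ and degree-$1$ spherical harmonics, and use the implicit function theorem (surjectivity of the differential of the constraint map) to get the reverse inclusion. No discrepancy with the paper's intended argument.
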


The above two Lemmas follow from direct computations and some elementary properties of spherical harmonics.

The following lemma can be proved by the Implicit Function Theorem (see Lemma 6.4 in \cite{Li95}).

\begin{lem}\label{lempe4}
For $\tilde{w}\in T_1\M_0$, $\tilde{w}$ close to $0$, there exist $\mu(\tilde{w})\in \R$, $\eta(\tilde{w})\in \R^{n+1}$ being $C^2$ functions such that
\be\label{pe2}
 \dashint_{\mathbb{S}^n}|1+\tilde{w}+\mu+\eta\cdot x|^{2n/(n-2\sigma)}=1
\ee
and
\be\label{pe3}
 \int_{\Sn}|1+\tilde{w}+\mu+\eta\cdot x|^{2n/(n-2\sigma)}x=0.
\ee
Furthermore, $\mu(0)=0, \eta(0)=0, D\mu(0)=0$ and $D\eta(0)=0$.
\end{lem}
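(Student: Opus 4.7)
The plan is a direct application of the Implicit Function Theorem in the space of parameters $(\mu,\eta)\in\R\times\R^{n+1}$, with $\tilde w\in T_1\M_0$ playing the role of the parameter we regard as ``input''. Concretely, I would define
\[
F:T_1\M_0\times \R\times\R^{n+1}\to \R\times\R^{n+1},
\]
\[
F(\tilde w,\mu,\eta)=\left(\dashint_{\Sn}|1+\tilde w+\mu+\eta\cdot x|^{\frac{2n}{n-2\sigma}}-1,\ \int_{\Sn}|1+\tilde w+\mu+\eta\cdot x|^{\frac{2n}{n-2\sigma}}x\right),
\]
so that the two required identities \eqref{pe2}--\eqref{pe3} are exactly $F(\tilde w,\mu,\eta)=0$. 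Note $F(0,0,0)=0$ because $\dashint_{\Sn}1=1$ and $\int_{\Sn} x\,\ud vol_{g_{\Sn}}=0$.

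Next I would compute the partial derivative of $F$ with respect to $(\mu,\eta)$ at $(0,0,0)$. Writing $q=\frac{2n}{n-2\sigma}$, a direct differentiation under the integral yields $\partial_\mu F_1(0,0,0)=q$, $\partial_{\eta_j}F_1(0,0,0)=q\dashint_{\Sn}x_j=0$, $\partial_\mu F_2(0,0,0)=q\int_{\Sn}x=0$, and
\[
\partial_{\eta_j}(F_2)_i(0,0,0)=q\int_{\Sn}x_ix_j\,\ud vol_{g_{\Sn}}=\frac{q\,\omega_n}{n+1}\delta_{ij},
\]
using the standard identity $\int_{\Sn}x_ix_j=\frac{\omega_n}{n+1}\delta_{ij}$. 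Thus the Jacobian $\partial_{(\mu,\eta)}F(0,0,0)$ is block diagonal and nonsingular, so the Implicit Function Theorem produces $C^2$ functions $\mu(\tilde w)$, $\eta(\tilde w)$ defined for $\tilde w$ near $0$ in $T_1\M_0$ with $\mu(0)=0$, $\eta(0)=0$ and $F(\tilde w,\mu(\tilde w),\eta(\tilde w))=0$; the regularity $C^2$ is inherited from the $C^2$ dependence of $F$ on its arguments, which is standard for the integrand $|\cdot|^{q}$ when $q>2$ (which holds since $n>2\sigma$).

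Finally, to see $D\mu(0)=0$ and $D\eta(0)=0$, I would differentiate $F(\tilde w,\mu(\tilde w),\eta(\tilde w))=0$ at $\tilde w=0$ in a direction $h\in T_1\M_0$. By the chain rule,
\[
\partial_{\tilde w}F(0,0,0)\cdot h+\partial_{(\mu,\eta)}F(0,0,0)\cdot(D\mu(0)h,D\eta(0)h)=0.
\]
Since $\partial_{\tilde w}F_1(0,0,0)\cdot h=q\dashint_{\Sn}h$ and $\partial_{\tilde w}F_2(0,0,0)\cdot h=q\int_{\Sn}hx$, and since by Lemma \ref{lempe3} every $h\in T_1\M_0$ is an $L^2$-sum of spherical harmonics of degree $\ge 2$, both $\int_{\Sn}h=0$ and $\int_{\Sn}hx_j=0$ for $1\le j\le n+1$ (orthogonality of distinct spherical harmonics). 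Hence $\partial_{\tilde w}F(0,0,0)\cdot h=0$, and invertibility of $\partial_{(\mu,\eta)}F(0,0,0)$ forces $D\mu(0)h=0$ and $D\eta(0)h=0$.

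There is no genuine obstacle here; the only point requiring care is verifying that the integrand is $C^2$-differentiable in the parameters, i.e., that the power $q=\frac{2n}{n-2\sigma}>2$ is large enough (and that $1+\tilde w+\mu+\eta\cdot x$ stays bounded away from singular behavior, which holds because $1$ dominates for small $\tilde w,\mu,\eta$). Once this is in place, the IFT delivers the conclusion mechanically.
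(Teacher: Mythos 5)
Your proof is correct and is exactly the route the paper takes: the paper simply cites the Implicit Function Theorem and Lemma 6.4 of \cite{Li95}, and your write-up fills in the Jacobian computation, the nondegeneracy, and the chain-rule verification that $D\mu(0)=0$, $D\eta(0)=0$. One small caveat: your parenthetical claim that ``$1+\tilde w+\mu+\eta\cdot x$ stays bounded away from singular behavior because $1$ dominates for small $\tilde w$'' is not literally true, since $\tilde w$ small in $H^\sigma(\Sn)$ does not give smallness in $L^\infty(\Sn)$ (there is no embedding $H^\sigma\hookrightarrow L^\infty$ for $\sigma<n/2$); however, this is harmless, because the $C^2$ regularity of $F$ already follows from the fact that $v\mapsto\int_{\Sn}|v|^{q}$ is $C^2$ on $L^q(\Sn)$ for $q=\tfrac{2n}{n-2\sigma}>2$ together with $H^\sigma(\Sn)\hookrightarrow L^q(\Sn)$, so the parenthetical should simply be dropped.
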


Let us use $\tilde{w}\in T_1\M_0$ as local coordinates of $w\in \M_0$ near $w=1$, and $\tilde{w}=0$ corresponds to $w=1$.

Let
\[
 \tilde{E}(\tilde{w})=E_1(w)=\dashint_{\mathbb{S}^n}wP_\sigma(w),
\]
where $\tilde{w}\in T_1\M_0$ and $w=1+\tilde{w}+\mu(\tilde{w})+\eta(\tilde{w})\cdot x$ as in Lemma \ref{lempe4}.
It is well-known (see, e.g.  \cite{Mo}) that $P_\sigma$ has eigenfunctions the spherical harmonics and
eigenvalues
\[
 \lda_k=\frac{\Gamma(k+\frac{n}{2}+\sigma)}{\Gamma(k+\frac{n}{2}-\sigma)}, \quad k\geq 0,
\]
with multiplicity $(2k+n-1)(k+n-2)!/(n-1)!k!$. Note that $\lda_0=P_{\sigma}(1)$.
Since $P_\sigma$ is a linear operator, it follows from Lemma \ref{lempe2}, Lemma \ref{lempe3} and Lemma \ref{lempe4} that
\[
\tilde{E}(\tilde{w})=P_{\sigma}(1)(1+2\mu(\tilde{w}))+\dashint_{\mathbb{S}^n}\tilde{w}P_\sigma(\tilde{w})+o(\|\tilde{w}\|^2_{H^\sigma(\mathbb{S}^n)}).
\]
By \eqref{pe2}, it follows that
\[
\begin{split}
\mu(\tilde{w})&=\frac12 D^2\mu(0)(\tilde{w},\tilde{w})+o(\|\tilde{w}\|^2_{H^\sigma(\mathbb{S}^n)})\\&
=-\frac12\cdot \frac{n+2\sigma}{n-2\sigma}\dashint_{\Sn}\tilde{w}^2+o(\|\tilde{w}\|^2_{H^\sigma(\mathbb{S}^n)}).
\end{split}
\]
Note that $\lda_1=\frac{n+2\sigma}{n-2\sigma}P_{\sigma}(1)$. Therefore, we have
\be\label{pe4}
\tilde{E}(\tilde{w})=P_{\sigma}(1)+\dashint_{\mathbb{S}^n}(\tilde{w}P_\sigma(\tilde{w})-\lda_1\tilde{w}^2)
+o(\|\tilde{w}\|^2_{H^\sigma(\mathbb{S}^n)}).
\ee
Set $Q(\tilde{w}):=\dashint_{\mathbb{S}^n}(\tilde{w}P_\sigma(\tilde{w})-\lda_1\tilde{w}^2)$.
By Lemma \ref{lempe3}, we see that for any $\tilde{w},\tilde{v}\in T_1\M_0$
\be \label{pe5}
\begin{split}
D^2Q(\tilde{w})(\tilde{v},\tilde{v})&=2\dashint_{\mathbb{S}^n}\tilde{v}P_\sigma(\tilde{v})-
\lda_1\tilde{v}\tilde{v}\\&
\geq 2(1-\frac{\lda_1}{\lda_2})\|\tilde{v}\|^2_{H^\sigma(\Sn)},
\end{split}
\ee
which means the quadratic form $Q(\tilde{w})$ is positive definite in $T_1\M_0$. Moreover, it follows from \eqref{pe1} that
for some $\va_1=\va_1(n,\sigma)>0$,
\be\label{pe6}
 \|E_K|_{\M_0}-E_1|_{\M_0}\|_{C^2(B_{\va_1}(1))}\leq O(\va),
\ee
provided $\|K-1\|_{L^\infty(\mathbb{S}^n)}\leq \va$. Here $B_{\va_1}(1)$ denotes the ball in $\M_0$ of
radius $\va_1$ centered at $1$.

It is elementary to compute that for any $\tilde{w}\in T_1\M_0$, we have, for any constant $c$, that
\[
 \langle DE_K|_{\M_0}(1),\tilde{w}\rangle=-2P_{\sigma}(1)\left(\dashint_{\mathbb{S}^n}K\right)^{(2\sigma-2n)/n}\dashint_{\mathbb{S}^n}(K-c)\tilde{w}.
\]
It follows that
\[
\begin{split}
 | \langle DE_K|_{\M_0}(1),\tilde{w}\rangle|&\leq C\|K-c\|_{L^{2n/(n+2\sigma)}}\|\tilde{w}\|_{L^{2n/(n-2\sigma)}}\\&
 \leq C\|K-c\|_{L^{2n/(n+2\sigma)}}\|\tilde{w}\|_{H^{\sigma}}.
 \end{split}
\]
Therefore,
\be\label{pe7}
\| DE_K|_{\M_0}(1)\|\leq C\|K-c\|_{L^{2n/(n+2\sigma)}}.
\ee

\begin{lem}\label{lempe5}
Let $K\in C^1(\Sn)$. There exist $\va_2=\va_2(n,\sigma)>0$, $\va_3=\va_3(n,\sigma)>0,$ such that,  if
$\|K-1\|_{L^\infty(\mathbb{S}^n)}\leq \va \leq \va_2$,
\[
 \min_{w\in \M_0,\ \|w-1\|_{H^\sigma}\leq \va_3} E_K(w)
\]
has a unique minimizer $w_K$. Furthermore, $D^2E_K|_{\M_0}(w_K)$ is positive definite and
\be\label{pe8}
w_K>0\quad \mbox{on }\mathbb{S}^n,
\ee
\be\label{pe9}
\|w_K-1\|_{H^\sigma}\leq C(n,\sigma)\inf_{c\in \R}\|K-c\|_{L^{2n/(n+2\sigma)}},
\ee
\be\label{pe10}
\|w_K-1\|_{L^{\infty}}+\|P_\sigma(w_K-1)\|_{L^{\infty}}\leq o_{\va}(1),
\ee
where $o_{\va}(1)$ denotes some quantity depending only on $n,\sigma$ which tends to $0$ as $\va\to 0$.
If $\sigma\geq \frac{1}{2}$, then there exists $C(n,\sigma,\va_2)>0$ such that
\be\label{gradient estimate for wK}
\|\nabla w_K\|_{L^2}\leq C(n,\sigma,\va_2) \inf_{\tilde c\in [1/2,2]}\|K-\tilde c\|_{L^2}.
\ee
\end{lem}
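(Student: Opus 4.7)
The plan is to view $E_K|_{\M_0}$ as a $C^2$-small perturbation of $E_1|_{\M_0}$ near $w=1$ and to run a quantitative inverse function argument. By \eqref{pe4}--\eqref{pe5}, $E_1|_{\M_0}$ has a strict nondegenerate minimum at $w=1$ with Hessian bounded below by $2(1-\lda_1/\lda_2)>0$ on $T_1\M_0$. By \eqref{pe6}, for $\va\le\va_2$ small enough the functional $E_K|_{\M_0}$ remains strictly convex on $B_{\va_3}(1)\cap\M_0$ with a uniform Hessian lower bound. Combined with the smallness $\|DE_K|_{\M_0}(1)\|=O(\va)$ from \eqref{pe7}, a Banach contraction argument on the gradient map produces a unique critical point $w_K$ strictly inside this ball; strict convexity makes $w_K$ the unique minimizer and forces $D^2E_K|_{\M_0}(w_K)$ to be positive definite.

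For \eqref{pe9}, write $w_K=1+\tilde w_K$ in the local coordinates of Lemma \ref{lempe4} and Taylor expand
\[
0 = DE_K|_{\M_0}(w_K) = DE_K|_{\M_0}(1) + D^2E_K|_{\M_0}(1)[\tilde w_K] + O(\|\tilde w_K\|_{H^\sigma}^2).
\]
Pairing with $\tilde w_K$ and using the Hessian lower bound gives $\|\tilde w_K\|_{H^\sigma}\le C\|DE_K|_{\M_0}(1)\|$; taking the infimum in \eqref{pe7} over $c\in\R$ yields \eqref{pe9}. The Euler--Lagrange equation for $w_K$ has the form
\[
P_\sigma w_K = (\lda K + \mu_0 + \mu\cdot x)\, w_K^{(n+2\sigma)/(n-2\sigma)}
\]
with Lagrange multipliers $\lda,\mu_0\in\R$ and $\mu\in\R^{n+1}$ attached to the two constraints defining $\M_0\subset\M$. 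Since $w_K\to 1$ in $H^\sigma(\Sn)$ by \eqref{pe9}, the fractional regularity theory (Proposition 2.4 of \cite{JLX}) bootstraps this convergence up through $L^p$ scales to $L^\infty$ and then to $C^2$, yielding \eqref{pe10} and hence \eqref{pe8}.

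The gradient estimate \eqref{gradient estimate for wK} is the main technical step and the principal obstacle. Setting $p=(n+2\sigma)/(n-2\sigma)$ and $\lda_0=P_\sigma(1)$, rewrite the Euler--Lagrange equation, for any constant $\tilde c$, as
\[
P_\sigma(w_K-1) = \lda(K-\tilde c)w_K^p + (\lda\tilde c - \lda_0)w_K^p + \lda_0(w_K^p-1) + (\mu_0+\mu\cdot x)w_K^p.
\]
Since $2\sigma\ge 1$, elliptic regularity for $P_\sigma$ together with the Sobolev embedding $H^{2\sigma}(\Sn)\hookrightarrow H^1(\Sn)$ gives
\[
\|\nabla w_K\|_{L^2}\le C\bigl(\|P_\sigma(w_K-1)\|_{L^2}+\|w_K-1\|_{L^2}\bigr).
\]
Using $\|w_K\|_{L^\infty}\le 1+o_\va(1)$ and $\|w_K^p-1\|_{L^2}\le C\|w_K-1\|_{L^2}$ from \eqref{pe10}, and absorbing the constant and degree-one spherical harmonic components on the right-hand side through the Lagrange terms $\mu_0 w_K^p$ and $\mu\cdot x\, w_K^p$ (which are determined precisely to enforce the $\int |v|^{2n/(n-2\sigma)}$ and $\int x|v|^{2n/(n-2\sigma)}$ constraints), one arrives at $\|P_\sigma(w_K-1)\|_{L^2}\le C\|K-\tilde c\|_{L^2} + o_\va(1)\|w_K-1\|_{H^1}$. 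The small $H^1$ contribution is then absorbed into the left-hand side, producing \eqref{gradient estimate for wK} upon choosing $\tilde c$ close to $1\in[1/2,2]$. The delicate point is tracking the lower-order corrections and justifying the absorption of the constants and linear modes by the Lagrange multipliers.
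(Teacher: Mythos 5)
Your high-level structure matches the paper's: a $C^2$-small perturbation of $E_1|_{\M_0}$ near $w=1$, a contraction/implicit-function argument for existence and uniqueness of $w_K$, the Euler--Lagrange equation with Lagrange multipliers, and a regularity bootstrap for \eqref{pe10} and \eqref{pe8}. Two points deserve scrutiny.

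First, your Euler--Lagrange equation has an extra Lagrange multiplier $\mu_0$ that is in fact zero and should be dropped. Since $E_K$ is homogeneous of degree $0$ in $v$, pairing $DE_K(w_K)=\mu_0\,D\Phi_1+\Lambda\cdot D\Phi_2$ (where $\Phi_1(v)=\dashint|v|^{2n/(n-2\sigma)}$ and $\Phi_2(v)=\dashint x|v|^{2n/(n-2\sigma)}$) with $v=w_K$ gives $0=\mu_0\cdot\tfrac{2n}{n-2\sigma}+\Lambda\cdot 0$, forcing $\mu_0=0$. The paper's form $P_\sigma(w_K)=(\lda_K K-\Lda_K\cdot x)|w_K|^{4\sigma/(n-2\sigma)}w_K$ is thus correct, with $\lda_K$ determined a posteriori by the solution rather than a free multiplier, and only $\Lda_K\in\R^{n+1}$ free. (Your route to \eqref{pe8} via \eqref{pe10} rather than via the paper's separate $w_K^-=0$ argument is legitimate, provided the regularity theory from \cite{JLX} indeed handles $|w_K|^{4\sigma/(n-2\sigma)}w_K$ without knowing the sign in advance.)

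Second, and more substantively, the gradient estimate \eqref{gradient estimate for wK} has a genuine gap. You recognize correctly that the mechanism is $\|u\|_{H^1}\le C\|P_\sigma u\|_{L^2}$ for $\sigma\ge\tfrac12$ (which the paper phrases via the spherical harmonic expansion and the eigenvalue comparison $\lda_k\sim k^{2\sigma}$ vs.\ $k$), and that one needs $\|P_\sigma(w_K-1)\|_{L^2}\lesssim\|K-\tilde c\|_{L^2}$. But you only claim to ``absorb the constant and degree-one modes through the Lagrange terms'' without showing it; the delicate point you flag is exactly the crux of the proof, and it is not finessed by an absorption trick. The paper establishes two quantitative bounds that your argument needs and does not supply: $|\lda_K-P_\sigma(1)/\tilde c|\le C\|K-\tilde c\|_{L^{2n/(n+2\sigma)}}$, obtained from \eqref{pe9}, \eqref{equation:wK bound} and the definition of $\lda_K$; and $|\Lda_K|\le C\|K-\tilde c\|_{L^{2n/(n+2\sigma)}}$, obtained by multiplying \eqref{pe11} by $\Lda_K\cdot x$, integrating, using $P_\sigma(\Lda_K\cdot x)=\lda_1\Lda_K\cdot x$, the orthogonality $\int_{\Sn}x\,w_K^{2n/(n-2\sigma)}=0$ built into $\M_0$, and H\"older. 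Only with both of these in hand does $\|(\lda_K K-\Lda_K\cdot x)w_K^{(n+2\sigma)/(n-2\sigma)}-P_\sigma(1)\|_{L^2}\le C\|K-\tilde c\|_{L^2}$ follow and the $H^1$ bound close. Without them, the right-hand side of your decomposition still contains terms of size $O(\va)$ rather than $O(\|K-\tilde c\|_{L^2})$, and your ``absorb the small $H^1$ contribution'' step cannot repair that, because the deficit is not a small multiple of $\|w_K-1\|_{H^1}$ but an uncontrolled constant.
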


\begin{proof}
It follows from \eqref{pe4}, \eqref{pe5} and \eqref{pe6} that the minimizing problem has a unique minimizer $w_K$
and $D^2E_K|_{\M_0}(w_K)$ is positive definite. \eqref{pe9} follows from \eqref{pe5}, \eqref{pe6}, \eqref{pe7} and some standard functional analysis arguments.

Since $w_K$ is a constrained local minimum, $w_K$ satisfies the Euler-Lagrange equation for some
Lagrange multiplier $\Lda_K\in \R^{n+1}$:
\be\label{pe11}
P_\sigma(w_K)=(\lda_KK-\Lda_K\cdot x)|w_K|^{4\sigma/(n-2\sigma)}w_K\quad \mbox{on } \mathbb{S}^n,
\ee
where
\[
 \lda_K=\frac{\dashint_{\mathbb{S}^n}w_KP_\sigma(w_K)}{
 \dashint_{\mathbb{S}^n}K|w_K|^{\frac{2n}{n-2\sigma}}
 }.
\]
It is clear that $|\lda_K-c(n,\sigma)|=O(\va)$ (recall that $c(n,\sigma)$ is defined in \eqref{main equ}, which equals to $P_{\sigma}(1)$). Since $P_\sigma$ is a self-adjoint operator and $P_\sigma(\Lda_K\cdot x)=\lda_1(n,\sigma)\Lda_K\cdot x$,
multiplying \eqref{pe11} by $\Lda_K\cdot x$ and integrating over both sides we have
\be\label{estimate for Lda}
\begin{split}
 &\lda_1(n,\sigma)\int_{\mathbb{S}^n}w_K\Lda_K\cdot x\\&=\lda_K\int_{\mathbb{S}^n}K|w_K|^{4\sigma/(n-2\sigma)}w_K\Lda_K\cdot x-
\int_{\mathbb{S}^n}(\Lda_K\cdot x)^2|w_K|^{4\sigma/(n-2\sigma)}w_K.
\end{split}
\ee
Making use of the fact that $\|K-1\|_{L^\infty}\leq \va$ and $\|w_K-1\|_{H^\sigma}\leq O(\va)$, we conclude that
\be\label{eq:lda bound}
 |\Lda_K|=O(\va).
\ee
Set $w_K=w_K^+-w_K^-$. Note that $\int_{\mathbb{S}^n}|w_K^-|^{2n/(n-2\sigma)}\leq\int_{\mathbb{S}^n}|w_K-1|^{2n/(n-2\sigma)}\leq o_\va(1)$. On the other hand, we have, by multiplying \eqref{pe11} by
$-w_K^-$,
\[
\begin{split}
C\int_{\Sn}(w_K^-)^{\frac{2n}{n-2\sigma}}
&\geq \int_{\mathbb{S}^n}w_K^-P_\sigma(-w_K)\\
&\geq\int_{\mathbb{S}^n}w_K^-P_\sigma(w_K^-)\\
&\geq c(n,\sigma)\left(\int_{\Sn}(w_K^-)^{\frac{2n}{n-2\sigma}}\right)^{\frac{n-2\sigma}{n}},
\end{split}
\]where we used \eqref{pe1}.
Therefore, we conclude that $w_K^-=0$. Then \eqref{pe8} follows from \eqref{description of P sigma} and \eqref{pe11}.

It follows from \eqref{pe9}, \eqref{pe11}, \eqref{eq:lda bound}, Lemma 2.2 in \cite{JLX} and Proposition 2.4 in \cite{JLX} that
\be\label{equation:wK bound}
\|w_K-1\|_{L^{\infty}} \leq o_{\va}(1),
\ee
which, together with \eqref{pe11}, leads to
\[
 \|P_\sigma(w_K-1)\|_{L^{\infty}} \leq o_{\va}(1).
 \]
 Then \eqref{pe10} follows immediately.

By \eqref{equation:wK bound} and \eqref{pe9}, we can see that for any $\tilde c\in [1/2,2]$,
\[
\begin{split}
|\lda_K-P_{\sigma}(1)/\tilde c|&\leq C(n,\sigma,\va_2)(\|w_k-1\|_{H^{\sigma}}+\|w_k-1\|_{L^1}+\|K-\tilde c\|_{L^1})\\
&\leq C(n,\sigma,\va_2)\|K-\tilde c\|_{L^{2n/(n+2\sigma)}}.
\end{split}
\]
From \eqref{estimate for Lda}, \eqref{pe8}, \eqref{equation:wK bound} and the fact that $w_K\in \M_0$, we have that
\[
|\Lda_K|^2\leq C(n,\sigma)|\Lda_K|\left(\int_{\Sn}|K-\tilde c|+\int_{\Sn}|w_K-1|\right).
\]
By \eqref{pe9} and H\"older inequalities we have
\[
|\Lda_K|\leq C(n,\sigma)\|K-\tilde c\|_{L^{2n/(n+2\sigma)}}.
\]
Thus
$$\|(\lda_KK-\Lda_K\cdot x)w_K^{\frac{n+2\sigma}{n-2\sigma}}-P_{\sigma}(1)\|_{L^2}\le C(n,\sigma,\va_2)\|K-\tilde c\|_{L^2}.$$
Since
\[
P_{\sigma}(w_K-1)=(\lda_KK-\Lda_K\cdot x)w_K^{\frac{n+2\sigma}{n-2\sigma}}-P_{\sigma}(1),
\]
by the spherical expansion of $w_K-1$ and eigenvalues of $P_{\sigma}$, it's easy to see that, for $\sigma\geq \frac{1}{2}$,
\[
\|w_K-1\|^2_{H^1}\leq \int_{\Sn} \big(P_{\sigma}(w_K-1)\big)^2 \le C(n,\sigma,\va_2)\|K-\tilde c\|^2_{L^2}.
\]
Hence \eqref{gradient estimate for wK} holds.
\end{proof}

For $P\in\Sn,\ t\ge 1$, we write $v\in \M$ as $v=\varpi (w,p)=T^{-1}_{\varphi_p}w,\ w\in\M_0,\ p=sP,\ s=(t-1)/t$. Write $E_K(v)$ in the $(w,p)$ variables:
\[
I(w,p):=E_K(v)=E_{K\circ\varphi_p}(w).
\]
Consider, for each $p\in B_1$, that
\[
\min_{w\in\M_0,\ \|w-1\|_{H^{\sigma}}\leq \va_3}I(w,p)=\min_{w\in\M_0,\ \|w-1\|_{H^{\sigma}}\leq \va_3}E_{K\circ\varphi_{p}}(w).
\]
It follows from Lemma \ref{lempe5} that for $\|K-1\|_{L^{\infty}(\Sn)}\leq \va\leq \va_2$,
the minimizer exists and we denote it as $w_p$ where $p=(t-1)P/t$, $P\in \Sn$. Set $v_p=T^{-1}_{\varphi_p}w_p$.
We also know from \eqref{pe8} that $w_p>0$ on $\Sn$. As illustrated in \eqref{pe11}, we have for some $\Lda_p\in\R^{n+1}$ that
\be\label{eq:lag-mul}
P_\sigma(w_p)=(\lda_pK\circ\varphi_p-\Lda_p\cdot x)w_p^{(n+2\sigma)/(n-2\sigma)}\quad \mbox{on } \mathbb{S}^n,
\ee
where
\[
 \lda_p=\frac{\dashint_{\mathbb{S}^n}w_p P_\sigma(w_p)}{
 \dashint_{\mathbb{S}^n}K\circ\varphi_p w_p^{\frac{2n}{n-2\sigma}}
 }.
\]
 It follows from the Kazdan-Warner type condition \eqref{1.3} that
\[
\int_{\Sn}\langle\nabla(\lda_pK\circ\varphi_p-\Lda_p\cdot x),\nabla x\rangle w_p^{\frac{2n}{n-2\sigma}}=0.
\]
Namely,
\be\label{eq:lag-mul-2}
\sum_{j=1}^{n+1}\Lda_p^j\int_{\Sn}\langle\nabla x_j,\nabla x_i\rangle w_p^{\frac{2n}{n-2\sigma}}=\lda_p\int_{\Sn}\langle\nabla(K\circ\varphi_p),\nabla x_i\rangle w_p^{\frac{2n}{n-2\sigma}},\ 1\leq i\leq n+1.
\ee

It follows from the implicit function theorem that $w_p$ depends $C^2$ on $p$. Hence,
we have, together with the fact that $\int_{\Sn}\langle\nabla x_j,\nabla x_i\rangle w_p^{\frac{2n}{n-2\sigma}}$
is a positive definite matrix, that both $\lda_p$ and $\Lda_p$ depend $C^2$ on $p$. 

Let
\[
\begin{split}
\mathcal{N}_1&=\{w\in \M_0\ |\ \|w-1\|_{H^{\sigma}}\le \va_3\},\\
\mathcal{N}_2(\tilde t)&=\{v\in \M\ |\ v=\varpi (w,p) \ \mbox{for some } w\in\mathcal{N}_1 \\
& \quad\quad\mbox{and } p=sP, P\in \Sn, s=\frac{t-1}{t}, 1\le t<\tilde t\},\\
\mathcal{N}_2&=\mathcal{N}_2(\infty),\\
\mathcal{N}_3(\tilde t)&=\{v\in H^{\sigma}\setminus\{0\}\ |\ \|v\|^{-1}_{L^{2n/(n-2\sigma)}}v\in \mathcal{N}_2(\tilde t)\},\\
\mathcal{N}_3&=\mathcal{N}_3(\infty).
\end{split}
\]

\begin{thm}\label{perturbation}
Suppose $\sigma\geq \frac{1}{2}$. There exists some constant $\va_4=\va_4(n)\in (0,\va_2)$
such that for any $T_1>0$ and any nonincreasing positive continuous function $\omega(t) (1\leq t<\infty)$
satisfying $\lim_{t\to\infty}\omega(t)=0$, if a nonconstant function $K\in C^1(\Sn)$ satisfies, for $t\geq T_1$, that
\[
\|K-1\|_{L^{\infty}(\Sn)}\leq \va_4,
\]
\be\label{condition for not degree}
\|K\circ\varphi_{P,t}-K(P)\|^2_{L^2(\Sn)}\leq \omega(t)\left|\int_{\Sn}K\circ\varphi_{P,t}(x)x\right|
\ee
for all $P\in\Sn$ and
\be\label{eq:degree not zero}
\deg\left(\int_{\Sn}K\circ\varphi_{P,t}(x)x,B^{n+1},0\right) \neq 0,
\ee
Then \eqref{main equ} has at least one positive solution. Furthermore, for any $\al\in (0,1)$ satisfying that $\al+2\sigma$ is not an integer, there exist some positive constants $C_2$ depending only on $n,\al, \sigma, T_1$ and $\omega$ such that for all $ C\ge C_2$,
\be\label{eq: degree}
\begin{split}
&\deg(v-(P_{\sigma})^{-1}K|v|^{4\sigma/(n-2\sigma)}v,\\
&\quad\quad \mathcal{N}_3(t)\cap\{v\in C^{2\sigma+\al}\ |\ \|v\|_{C^{2\sigma+\al}}<C, 0\})\\
&\quad=(-1)^n\deg\left(\int_{\Sn}K\circ\varphi_{P,t}(x)x,B^{n+1},0\right) .
\end{split}
\ee
\end{thm}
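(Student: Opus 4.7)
The plan is a finite-dimensional reduction parametrized by $p\in B^{n+1}$, followed by a Leray--Schauder degree computation. For $\|K-1\|_{L^{\infty}(\Sn)}\le\va_4$ small and each $p=sP\in B^{n+1}$, applying Lemma \ref{lempe5} to $K\circ\varphi_p$ yields a unique positive minimizer $w_p$ of $E_{K\circ\varphi_p}$ over $\mathcal{N}_1$, depending $C^2$ on $p$ by the implicit function theorem together with the positive definiteness of $D^2 E_{K\circ\varphi_p}|_{\M_0}(w_p)$. Setting $v_p:=T^{-1}_{\varphi_p}w_p$, the function $v_p$ solves \eqref{main equ} (after the natural positive rescaling) if and only if the Lagrange multiplier $\Lda_p\in\R^{n+1}$ in \eqref{eq:lag-mul} vanishes. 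Locating solutions in $\mathcal{N}_2$ is therefore equivalent to finding zeros of the $C^1$ vector field $p\mapsto\Lda_p$ on $B^{n+1}$, and by the excision and product properties of the Leray--Schauder degree the full degree in \eqref{eq: degree} reduces, up to a dimensional sign, to the Brouwer degree $\deg(\Lda,B^{n+1},0)$.

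Next I show that $\Lda_p$ is a small perturbation of the explicit field $F(p):=\int_{\Sn}K\circ\varphi_p(x)\,x\,\ud vol_{g_{\Sn}}$. Since $\int_{\Sn}\langle\nabla x_i,\nabla x_j\rangle w_p^{2n/(n-2\sigma)}$ is close to a positive multiple of the identity (by Lemma \ref{lempe5}), \eqref{eq:lag-mul-2} shows that $\Lda_p$ is a positive multiple of the vector with components $I_i(p):=\int_{\Sn}\langle\nabla(K\circ\varphi_p),\nabla x_i\rangle w_p^{2n/(n-2\sigma)}$. Split
\[
I_i(p)=\int_{\Sn}\langle\nabla(K\circ\varphi_p),\nabla x_i\rangle+\int_{\Sn}\langle\nabla(K\circ\varphi_p),\nabla x_i\rangle(w_p^{2n/(n-2\sigma)}-1).
\]
The identity $-\Delta_{g_{\Sn}}x_i=nx_i$ turns the first summand into $nF_i(p)$. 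For the second, I integrate by parts to transfer $\nabla$ off $K\circ\varphi_p$, then apply Cauchy--Schwarz to bound it by $C\|K\circ\varphi_p-K(P)\|_{L^2}\|\nabla w_p\|_{L^2}$; the gradient estimate \eqref{gradient estimate for wK}, which is where the hypothesis $\sigma\ge\tfrac{1}{2}$ enters crucially, then yields the quadratic bound $O(\|K\circ\varphi_p-K(P)\|_{L^2}^2)$.

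The hypothesis \eqref{condition for not degree} is now exactly what is needed: it says this quadratic error is $o(|F(p)|)$ uniformly for $t\ge T_1$, that is, in a collar of $\partial B^{n+1}$. The straight-line homotopy $(1-\tau)\Lda_p+\tau c_n F(p)$ is therefore non-vanishing on a suitable $\partial B^{n+1}_{1-\delta}$ for $\delta$ small, and homotopy invariance of the Brouwer degree gives $\deg(\Lda,B^{n+1},0)=\deg(F,B^{n+1},0)$, which is nonzero by \eqref{eq:degree not zero}. This produces a $p_0$ with $\Lda_{p_0}=0$ and hence a positive $C^2$ solution of \eqref{main equ}. The $w$-fiber of the reduction contributes index $+1$ (strict local minimum with positive-definite Hessian, by Lemma \ref{lempe5}), so the full Leray--Schauder identity \eqref{eq: degree} follows once the orientation factor $(-1)^n$, which originates in the way $p=((t-1)/t)P$ identifies $\mathcal{N}_2(t)$ with $B^{n+1}\subset\R^{n+1}$, is accounted for.

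The main obstacle is controlling the reduction uniformly up to $|p|\to 1$: as $\varphi_p$ concentrates at $P$, quantities such as $\|\nabla(K\circ\varphi_p)\|_{L^2}$ can blow up, so the error estimate must be arranged to sit on $w_p$ via integration by parts rather than on $K\circ\varphi_p$ directly. This is precisely where the $H^1$-type estimate \eqref{gradient estimate for wK} (and thus $\sigma\ge\tfrac{1}{2}$) is used, and it also explains the scale-invariant shape of \eqref{condition for not degree}, which is calibrated exactly to dominate the quadratic error by $|F(p)|$ throughout the collar.
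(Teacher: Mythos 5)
Your proposal is correct and follows essentially the same route as the paper: the finite-dimensional reduction via Lemma \ref{lempe5}, identifying zeros of $p\mapsto\Lda_p$ with solutions, then the comparison $\Lda_p\approx\int_{\Sn}K\circ\varphi_p(x)\,x$ with the error bounded quadratically in $\|K\circ\varphi_p-K(P)\|_{L^2}$ using \eqref{pe9}, \eqref{pe10} and the $H^1$ estimate \eqref{gradient estimate for wK} (hence $\sigma\ge 1/2$), followed by homotopy invariance of the Brouwer degree and the Leray--Schauder reduction along the $w$-fiber with the $(-1)^n$ orientation factor. The paper's decomposition $A(P,t)=G(P,t)+I+II$ is precisely what your split of $I_i(p)$ after integration by parts produces (up to normalization), so the two arguments coincide step for step.
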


\begin{proof}
For $P\in\Sn$ and $t\geq 1$, we set
\[
\begin{split}
&A(P,t)=\frac{1}{n}|\Sn|^{-1}\int_{\Sn}\langle\nabla(K\circ\varphi_{P,t}),\nabla x\rangle w_{p}^{2n/(n-2\sigma)},\\
&G(P,t)=|\Sn|^{-1}\int_{\Sn}K\circ\varphi_{P,t}(x)x.
\end{split}\]
It is clear that $G(P,t)\neq 0$ for all $P\in\Sn$ and $t>T_1$. We write
\[
A(P,t)=G(P,t)+I+II,
\]
where
\[
\begin{split}
I&=|\Sn|^{-1}\int_{\Sn}\big(K\circ\varphi_{P,t}-K(P)\big)x\big(w_p^{2n/(n-2\sigma)}-1\big),\\
II&=-\frac{1}{n}|\Sn|^{-1}\int_{\Sn}\big(K\circ\varphi_{P,t}-K(P)\big)\langle\nabla x, \nabla(w_p^{2n/(n-2\sigma)})\rangle.
\end{split}
\]
Using Cauchy-Schwartz inequality, \eqref{pe9}, \eqref{pe10} and \eqref{gradient estimate for wK}, we have
\[
\begin{split}
|I|&\leq C \|K\circ\varphi_{P,t}-K(P)\|_{L^2(\Sn)}\|w_p^{2n/(n-2\sigma)}-1\|_{L^2(\Sn)}\\
&\leq C \|K\circ\varphi_{P,t}-K(P)\|_{L^2(\Sn)} \|K\circ\varphi_{P,t}-K(P)\|_{L^{2n/(n+2\sigma)}(\Sn)}\\
&\leq C \omega(t)|G(p,t)|.\\
|II|&\leq C\|K\circ\varphi_{P,t}-K(P)\|_{L^2(\Sn)}\|\nabla(w_p^{2n/(n-2\sigma)})|_{L^2(\Sn)}\\
&\leq C \|K\circ\varphi_{P,t}-K(P)\|_{L^2(\Sn)}\|K\circ\varphi_{P,t}-K(P)\|_{L^2(\Sn)}\\
&\leq C \omega(t)|G(p,t)|.
\end{split}
\]
It follows immediately that for large $t$,
\[
A(P,t)\cdot G(P,t)\geq (1-C\omega(t))|G(P,t)|^2.
\]
Therefore,
\[
\deg(A(P,t),B^{n+1},0)=\deg(G(P,t),B^{n+1},0).
\]
Since the matrix $[\int_{\Sn}\langle\nabla x_j,\nabla x_i\rangle w_p^{\frac{2n}{n-2\sigma}}]$ is positive definite, we have from \eqref{eq:lag-mul-2} that
\be\label{eq:degree equal}
\deg(\Lda_p, B^{n+1}_s,0)=\deg(A(P,t),B^{n+1},0)=\deg(G(P,t),B^{n+1},0)
\ee
for $s$ sufficiently closed to 1.
It follows from \eqref{eq:degree equal} and our hypothesis that for $s$ sufficiently closed to 1,
\[
\deg(\Lda_p, B^{n+1}_s,0)\neq 0.
\]
Therefore $\Lda_p$ has to have a zero inside $B^{n+1}$ which immediately implies that \eqref{main equ} has at least one positive solution.

Next we evaluate $\partial_{p}I(w_{p_0},p)|_{p=p_0}$ for $p_0=(t_0-1)P_0/t, P_0\in\Sn, t_0\ge 1$.

For $\|K-1\|_{L^{\infty}(\Sn)}\leq\va\leq\va_{4}$, for each $p\in B^{n+1}$, there exists a unique $w_p\in\M_0$, $\|w_p-1\|\leq \va_3$, such that
\[
\begin{split}
I(w_{p},p)&=\min_{w\in \M_0, \|w-1\|<\va_3}I(w,p),\\
w_p&>0\quad\mbox{on }\Sn,\\
D^2_w I(w_p,p)&\quad\mbox{is positive definite},\\
\|w_p-1\|_{H^{\sigma}}&\leq C\|K\circ\varphi_{P,t}-K(p)\|_{L^{2n/(n+2\sigma)}(\Sn)},\\
|w_K-1|+|P_\sigma(w_K-1)|&\leq o_{\va}(1).
\end{split}
\]
It can be seen from \eqref{eq:lag-mul} that $v_{p_0}=T^{-1}_{\varphi_{p_0}}w_{p_0}$ satisfies
\[
P_\sigma(v_{p_0})=(\lda_{p_0}K-\Lda_{p_0}\cdot \varphi^{-1}_{p_0})v_{p_0}^{(n+2\sigma)/(n-2\sigma)}\quad \mbox{on } \mathbb{S}^n.
\]
It follows that for any $\psi\in C^{\infty}(\Sn)$, we have
\[
\begin{split}
&\partial_v E_K(v_{p_0})\psi=-2\left(\dashint_{\Sn}Kv_{p_0}^{2n/(n-2\sigma)}\right)^{\frac{2\sigma-n}{n}}\dashint_{\Sn}\Lda_{p_0}\cdot \varphi^{-1}_{p_0}v_{p_0}^{\frac{n+2\sigma}{n-2\sigma}}\psi\\
&\partial_{p}I(w_{p_0},p)|_{p=p_0}\\
&=\partial_v E_K(v_{p_0})(\pa_p(T^{-1}_{\varphi_p}w_{p_0})|_{p=p_0})\\
&=-2\left(\dashint_{\Sn}Kv_{p_0}^{2n/(n-2\sigma)}\right)^{\frac{2\sigma-n}{n}}\dashint_{\Sn}\Lda_{p_0}\cdot \varphi^{-1}_{p_0}v_{p_0}^{\frac{n+2\sigma}{n-2\sigma}}(\pa_p(T^{-1}_{\varphi_p}w_{p_0})|_{p=p_0})\\
&=-\frac{n-2\sigma}{n}\left(\dashint_{\Sn}Kv_{p_0}^{2n/(n-2\sigma)}\right)^{\frac{2\sigma-n}{n}}\dashint_{\Sn}\Lda_{p_0}\cdot \varphi^{-1}_{p_0}(\pa_p(T^{-1}_{\varphi_p}w_{p_0})^{2n/(n-2\sigma)}|_{p=p_0})\\
&=-\frac{n-2\sigma}{n}\left(\dashint_{\Sn}Kv_{p_0}^{2n/(n-2\sigma)}\right)^{\frac{2\sigma-n}{n}}\pa_p\left(\dashint_{\Sn}\Lda_{p_0}\cdot \varphi^{-1}_{p_0}(T^{-1}_{\varphi_p}w_{p_0})^{\frac{2n}{n-2\sigma}}\right)\bigg|_{p=p_0}\\
&=-\frac{n-2\sigma}{n}\left(\dashint_{\Sn}Kv_{p_0}^{2n/(n-2\sigma)}\right)^{\frac{2\sigma-n}{n}}\pa_p\left(\dashint_{\Sn}\Lda_{p_0}\cdot \varphi^{-1}_{p_0}\circ\varphi_pw_{p_0}^{\frac{2n}{n-2\sigma}}\right)\bigg|_{p=p_0}.
\end{split}
\]
By Appendix A in \cite{Li95}, the matrix
\[
\pa_p\left(\dashint_{\Sn}\Lda_{p_0}\cdot \varphi^{-1}_{p_0}\circ\varphi_pw_{p_0}^{\frac{2n}{n-2\sigma}}\right)\bigg|_{p=p_0}
\]
is invertible with positive determinant. Therefore, for $t$ large with $s=(t-1)/t$, we have
\be\label{eq:degree-1}
(-1)^{n+1}\deg(\Lda_{p}, B_s, 0)=\deg(\pa_p I(w_p,p), B_s, 0).
\ee
Given Theorem B.1 in \cite{Li95} and Appendix \ref{spaces}, the rest of the proof of \eqref{eq: degree} is similar to that in page 386 of \cite{Li95} and we omit them here.
\end{proof}

Next we will give sufficient conditions for $K$ to satisfy \eqref{condition for not degree}.
The proof of Lemma 6.6 in \cite{Li95} indeed shows the following

\begin{lem}\label{lem condition for not degree}
Suppose $K\in C^{1,1}(\Sn)$ satisfies for some constant $A_1>0$, $K(P)\geq A_1$ for all $P\in\Sn$, and there exists some constant $0<\va_1<1$, such that for each critical point $P_0\in\Sn$ of $K$, there exists some $\beta=\beta(P_0)\in(1,n)$ such that in some geodesic normal coordinate system centered at $P_0$,
\[
K(y)=K(0)+Q^{(\beta)}(y)+R(y),\quad |y|<\va_1,
\]
where $Q^{(\beta)}(\lda y)=\lda^{\beta}Q^{(\beta)}(y)$ for any $\lda>0$, $y\in\R^n$ and
\[
A_6|y|^{\beta-1}\leq |\nabla Q^{(\beta)}(y)|\leq A_7|y|^{\beta-1},
\]
for some positive constants $A_6, A_7$. Here $R(y)$ satisfies $|R(y)||y|^{-\beta}+|\nabla R(y)||y|^{1-\beta}\leq \eta(|y|)$ for some continuous function $\eta$ with $\lim_{r\to 0^+}\eta(r)=0$. Suppose also that for some constant $d>0$, $|\nabla K(P)|\geq d$ for all $P\in\Sn$ with $\min\{|P-P_0||\nabla K(P_0)=0\}\ge \va_1/20$. Then there exists some positive constant $C_3$ depending on $n$, $A_1$, $A_6$, $A_7$, $d$, $\va_1$, $\min\{\beta-1, n-\beta\}$, $\eta$ and the modulo of continuity of $\nabla K$, such that for $P\in\Sn$, $\min\{|P-P_0||\nabla K(P_0)=0\}\ge C_3/t$, we have
\be\label{eq:lem condition for not degree}
\|K\circ\varphi_{P,t}-K(P)\|^2_{L^2(\Sn)}\leq o\left(\left|\int_{\Sn}K\circ\varphi_{P,t}(x)x\right|\right)
\ee
as $t\to\infty$.
\end{lem}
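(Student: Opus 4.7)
The conclusion \eqref{eq:lem condition for not degree} involves only ordinary Lebesgue integrals of $K$ on $\Sn$; the fractional operator $P_\sigma$ plays no role. Consequently the proof is essentially identical to that of Lemma 6.6 in \cite{Li95}, and I only outline the key steps.

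My plan is to pass to stereographic coordinates $y\in\R^n$ centered at $P$, in which the conformal map $\varphi_{P,t}$ becomes the Euclidean dilation $y\mapsto ty$ and the volume element on $\Sn$ becomes $J(y)\,dy:=(2/(1+|y|^2))^n\,dy$. Writing $\tilde K:=K\circ F_P^{-1}$ and using $\int_{\Sn}x\,dV=0$, the two sides of \eqref{eq:lem condition for not degree} become
\[
\|K\circ\varphi_{P,t}-K(P)\|_{L^2(\Sn)}^2=\int_{\R^n}|\tilde K(ty)-K(P)|^2J(y)\,dy,
\]
\[
\int_{\Sn}K\circ\varphi_{P,t}(x)\,x\,dV=\int_{\R^n}(\tilde K(ty)-K(P))F_P^{-1}(y)J(y)\,dy.
\]

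For the center-of-mass integral I would use the expansion $F_P^{-1}(y)\approx -P+2y$ near $y=0$ and $\tilde K(ty)-K(P)\approx 2\nabla K(P)\cdot(ty)/|ty|^2$ for $|ty|$ large (i.e., $x$ close to $P$) to obtain $G(P,t)=(c_n/t)\nabla K(P)+$ lower order. Hence $|G(P,t)|\geq c_n|\nabla K(P)|/t$. When $P$ lies outside the $\va_1/20$-neighborhood of every critical point the hypothesis gives $|\nabla K(P)|\geq d$ and so $|G(P,t)|\gtrsim d/t$; when $P$ lies inside a critical-point neighborhood but still satisfies $|P-P_0|\ge C_3/t$, the expansion $K=K(P_0)+Q^{(\beta)}+R$ together with $|\nabla Q^{(\beta)}|\ge A_6|y|^{\beta-1}$ gives $|\nabla K(P)|\gtrsim (C_3/t)^{\beta-1}$, whence $|G(P,t)|\gtrsim (C_3/t)^{\beta-1}/t$.

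For the $L^2$ norm, I would partition $\R^n$ into the preimage under $\varphi_{P,t}^{-1}$ of a small fixed neighborhood of each critical point of $K$ and its complement. On the complement, $|\tilde K(ty)-K(P)|$ is controlled by $C|\nabla K(P)|/|ty|$ when $|ty|\gtrsim 1$ and by $C\|K\|_{C^1}|ty|$ when $|ty|\lesssim 1$, which integrated against $J$ gives a contribution of order $|\nabla K(P)|^2/t^2$. On each critical-point preimage, a scaling change of variables together with the expansion $K=K(P_0)+Q^{(\beta)}+R$ bounds the contribution by $C\,t^{-2\beta}$ or a similar higher-order decay in $1/t$. Once $C_3$ is chosen large enough these combine to yield $\|K\circ\varphi_{P,t}-K(P)\|_{L^2}^2=o(|G(P,t)|)$, which is \eqref{eq:lem condition for not degree}.

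The main obstacle is the critical-point contribution at fractional flatness order $\beta\in(1,n)$: standard Taylor expansion is unavailable, and one must track both $Q^{(\beta)}$ and the remainder $R$ using the modulus $\eta$. The threshold $C_3/t$ is tuned—depending on $n,\beta,A_6,A_7,d,\va_1,\eta$ and the modulus of continuity of $\nabla K$—so that in the rescaled coordinate $z=ty$ the critical points lie at distance $\ge C_3$ from the origin, making their contribution to the $L^2$ integral strictly of higher order in $1/t$ than the gradient-driven lower bound for $|G(P,t)|$.
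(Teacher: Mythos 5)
The paper gives no proof of this lemma; it simply observes that the proof of Lemma~6.6 in \cite{Li95} carries over unchanged, since the statement involves only classical integrals of $K$ and the conformal maps $\varphi_{P,t}$, not the fractional operator $P_\sigma$. Your outline — stereographic rescaling at $P$, the leading-order estimate $|G(P,t)|\gtrsim|\nabla K(P)|/t$ with the two regimes $|\nabla K(P)|\geq d$ versus $P$ near a critical point of flatness order $\beta$, and the matching $L^2$ upper bound obtained by decomposing $\Sn$ according to proximity to critical points — is precisely the argument of \cite{Li95}, so it is essentially the same approach as the paper intends.
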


The following is Lemma 6.7 in \cite{Li95}.
\begin{lem}\label{lem condition degree not zero}
Suppose $K\in C^{1,1}(\Sn)$ satisfies for some constant $A_1>0$, $K(P)\geq A_1$ for all $P\in\Sn$, and there exists some constant $0<\va_1<1$, such that for each critical point $P_0\in\Sn$ of $K$, there exists some $\beta=\beta(P_0)\in(1,n)$ such that in some geodesic normal coordinate system centered at $P_0$,
\[
K(y)=K(0)+Q^{(\beta)}(y)+R(y),\quad |y|<\va_1,
\]
where $Q^{(\beta)}(\lda y)=\lda^{\beta}Q^{(\beta)}(y)$ for any $\lda>0$, $y\in\R^n$ and
\[
A_6|y|^{\beta-1}\leq |\nabla Q^{(\beta)}(y)|\leq A_7|y|^{\beta-1},
\]
for some positive constants $A_6, A_7$, $R(y)$ denotes some quantity satisfying\\ $\lim_{y\to 0}R(y)|y|^{-\beta} =0$ and $\lim_{y\to 0}|\nabla R(y)||y|^{1-\beta}=0$. Suppose also that for some constant $d>0$, $|\nabla K(P)|\geq d$ for all $P\in\Sn$ with $\min\{|P-P_0||\nabla K(P_0)=0\}\ge \va_1/20$, and
\[
\left(
\begin{array}{l}
 \int_{\R^n}\nabla Q^{(\beta)}(y+\eta)(1+|y|^2)^{-n}\,\ud y\\[2mm]
\int_{\R^n}Q^{(\beta)}(y+\eta)\frac{|y|^2-1}{|y|^2+1}(1+|y|^2)^{-n}\,\ud y
\end{array} \right)\neq 0 \quad \forall\  \eta\in \R^n,
\]
Then \eqref{eq:lem condition for not degree} holds. In particular, if $K$ is not identically equal to a constant, we have
\[
\int_{\Sn}K\circ\varphi_{P,t}(x)x\neq 0
\]
for large $t$.
\end{lem}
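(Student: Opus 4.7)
My plan is to follow the proof of Lemma 6.7 in \cite{Li95}, since the statement is copied verbatim and the argument is purely geometric/analytic in $K$ and $\varphi_{P,t}$, with no dependence on the operator $P_{\sigma}$ or fractional Sobolev structure. The claim packages two assertions: the $L^{2}$ estimate \eqref{eq:lem condition for not degree} for all $P\in\Sn$, and the non-vanishing $\int_{\Sn}K\circ\varphi_{P,t}(x)\,x\,\ud vol_{g_{\Sn}}\ne 0$ for large $t$ in the non-constant case. When $P$ is at distance at least $C_{3}/t$ from the critical set, \eqref{eq:lem condition for not degree} is already given by Lemma \ref{lem condition for not degree}: the present hypotheses $\lim_{y\to 0}R(y)|y|^{-\beta}=0$ and $\lim_{y\to 0}|\nabla R(y)||y|^{1-\beta}=0$ are equivalent to the modulus form $|R(y)||y|^{-\beta}+|\nabla R(y)||y|^{1-\beta}\le\eta(|y|)$ used there, by taking $\eta(r)$ to be the supremum of the left side over $|y|\le r$, which is continuous in $r$ and vanishes at $0$. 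The remaining content is therefore an asymptotic analysis of the vector $\int_{\Sn}K\circ\varphi_{P,t}(x)\,x\,\ud vol_{g_{\Sn}}$ for $P$ close to a critical point $P_{0}$ of $K$.

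For such $P$, I would work in stereographic coordinates $y\in\R^{n}$ based at $P$, write $F=F_{P}$, and rescale $z=y/t$. A direct change of variables (the conformal factor of $\varphi_{P,t}$ cancels the Jacobian up to a clean factor) gives
\[
\int_{\Sn}K\circ\varphi_{P,t}(x)\,x\,\ud vol_{g_{\Sn}}=\int_{\R^{n}}K(F(tz))\,F(z)\left(\frac{2}{1+|z|^{2}}\right)^{n}\,\ud z.
\]
As $t\to\infty$ the argument $F(tz)$ concentrates at $P$, so the leading term $K(P)\int_{\R^{n}}F(z)(2/(1+|z|^{2}))^{n}\,\ud z=K(P)\int_{\Sn}x\,\ud vol_{g_{\Sn}}=0$ vanishes. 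Inserting the normal-coordinate expansion $K=K(0)+Q^{(\beta)}+R$ around $P_{0}$, parametrizing $P=\exp_{P_{0}}(\eta/t)$ for $\eta\in\R^{n}$, and using the $\beta$-homogeneity of $Q^{(\beta)}$ together with the algebraic identities of \cite{Li95} (notably an integration by parts converting the height component of $F(z)$ into a gradient expression in $z$), one derives
\[
t^{\beta}\int_{\Sn}K\circ\varphi_{P,t}(x)\,x\,\ud vol_{g_{\Sn}}=c_{n}\,V(\eta)+o(1)\quad\mbox{as }t\to\infty,
\]
where $c_{n}>0$ is universal and $V(\eta)$ is exactly the $(n{+}1)$-vector appearing in the hypothesis; the $o(1)$ absorbs the contribution of $R$ via $\lim_{y'\to 0}|R(y')||y'|^{-\beta}=0$.

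Since $V(\eta)\ne 0$ for every $\eta\in\R^{n}$ by hypothesis, $V$ is continuous in $\eta$, and $|V(\eta)|\to\infty$ as $|\eta|\to\infty$ (the lower bound $|\nabla Q^{(\beta)}|\ge A_{6}|y'|^{\beta-1}$ forces $Q^{(\beta)}(y')$ to grow like $|y'|^{\beta}$, whence $V(\eta)$ grows like $|\eta|^{\beta}$), a compactness argument yields a uniform lower bound $|V(\eta)|\ge c_{0}>0$. Consequently $|\int_{\Sn}K\circ\varphi_{P,t}(x)\,x\,\ud vol_{g_{\Sn}}|\ge c\,t^{-\beta}$ uniformly in $P$ near $P_{0}$ for $t$ large, which delivers the non-vanishing. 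A parallel calculation, using the same change of variables and homogeneity, gives $\|K\circ\varphi_{P,t}-K(P)\|_{L^{2}(\Sn)}^{2}=O(t^{-2\beta})$, so the ratio of the two sides of \eqref{eq:lem condition for not degree} is $O(t^{-\beta})=o(1)$, which closes the estimate in the critical-point regime. The main technical obstacle is a uniform-in-$\eta$ treatment of the remainder $R$ across the change of variables, which is carried out exactly as in \cite{Li95}.
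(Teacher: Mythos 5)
Your approach matches the paper's, which gives no proof at all but simply states that this is Lemma 6.7 of \cite{Li95}; your reconstruction of Li's argument (handle $P$ away from the critical set by Lemma \ref{lem condition for not degree}, then blow up near a critical point via $P=\exp_{P_0}(\eta/t)$ and use homogeneity of $Q^{(\beta)}$ to identify the leading vector $V(\eta)$ and derive the lower bound by compactness) is the right one. One minor slip: $|V(\eta)|$ grows like $|\eta|^{\beta-1}$ (driven by the $\nabla Q^{(\beta)}$ components, since $\int_{\R^n}\frac{|y|^2-1}{(1+|y|^2)^{n+1}}\,\ud y=0$ kills the naive $|\eta|^\beta$ leading term in the last component), not $|\eta|^\beta$, but since $\beta>1$ this still gives $|V(\eta)|\to\infty$ and the compactness argument goes through unchanged.
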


\begin{cor}\label{cor1 of existence}
Suppose $K\in C^{1,1}(\Sn)$ is some positive function satisfying for each critical point $P_0\in\Sn$ of $K$, there exists some $\beta=\beta(P_0)\in(1,n)$ such that in some geodesic normal coordinate system centered at $P_0$,
\[
K(y)=K(0)+Q^{(\beta)}(y)+R(y),\quad \mbox{for all }y\mbox{ close to }0,
\]
where $Q^{(\beta)}(\lda y)=\lda^{\beta}Q^{(\beta)}(y)$ for any $\lda>0$, $y\in\R^n$ and
$R(y)$ denotes some quantity satisfying $\lim_{y\to 0}R(y)|y|^{-\beta} =0$ and $\lim_{y\to 0}|\nabla R(y)||y|^{1-\beta}=0$. Suppose also that
\[
|\nabla Q^{(\beta)}(y)|\sim |y|^{\beta-1}\quad \mbox{for all }y\mbox{ close to }0,
\]
\[
\left(
\begin{array}{l}
 \int_{\R^n}\nabla Q^{(\beta)}(y+\eta)(1+|y|^2)^{-n}\,\ud y\\[2mm]
\int_{\R^n}Q^{(\beta)}(y+\eta)\frac{|y|^2-1}{|y|^2+1}(1+|y|^2)^{-n}\,\ud y
\end{array} \right)\neq 0, \quad \forall\  \eta\in \R^n,
\]
Then for $t$ large enough,
\[
\int_{\Sn}K\circ\varphi_{P,t}(x)x\neq 0\quad\mbox{for all}\quad P\in\Sn.
\]
If we further assume that
\[
\|K-1\|_{L^{\infty}(\Sn)}\leq \va_4
\]
and
\[
\deg\left(\int_{\Sn}K\circ\varphi_{P,t}(x)x,B^{n+1},0\right) \neq 0,
\]
then for $\sigma\in [1/2,1)$, \eqref{main equ} has at least one positive $C^2$ solution $v$. Furthermore, for every $0<\al<1$ satisfying that $\al+2\sigma$ is not an integer and sufficiently large positive constant $C_2$, we have
\[
\begin{split}
&\deg(v-(P_{\sigma})^{-1}K|v|^{4\sigma/(n-2\sigma)}v,\\
&\quad\quad \mathcal{N}_3(t)\cap\{v\in C^{2\sigma+\al}\ |\ \|v\|_{C^{2\sigma+\al}}<C_2, 0\})\\
&\quad=(-1)^n\deg\left(\int_{\Sn}K\circ\varphi_{P,t}(x)x,B^{n+1},0\right) .
\end{split}
\]
\end{cor}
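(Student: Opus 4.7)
The plan is to reduce Corollary \ref{cor1 of existence} directly to Theorem \ref{perturbation} by checking its two nontrivial hypotheses---namely the $L^2$ smallness estimate \eqref{condition for not degree} and the nonvanishing of $G(P,t) := \int_{\Sn}K\circ\varphi_{P,t}(x)\,x$---through Lemma \ref{lem condition for not degree} and Lemma \ref{lem condition degree not zero}. Almost all the analytic content has already been done in those three results; what remains is a setup step and a quotation step.

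First, I would verify that all the hypotheses of Lemma \ref{lem condition degree not zero} are in place. Since $K\in C^{1,1}(\Sn)$ is positive on the compact manifold $\Sn$, we have $K\geq A_1$ for some $A_1>0$. The local expansion $K(y)=K(0)+Q^{(\beta)}(y)+R(y)$ together with $|\nabla Q^{(\beta)}(y)|\sim |y|^{\beta-1}$ and $|\nabla R(y)|\,|y|^{1-\beta}\to 0$ shows that $\nabla K(y)\neq 0$ in a punctured neighborhood of each critical point; hence each critical point is isolated, so by compactness of $\Sn$ there are only finitely many of them, $\{P_0^{(1)},\dots,P_0^{(m)}\}$. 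Consequently, outside a fixed small neighborhood of the critical set the continuous function $|\nabla K|$ is bounded below by some $d>0$. The homogeneity plus the two-sided bound on $|\nabla Q^{(\beta)}|$ in the corollary yields the quantitative constants $A_6,A_7>0$ required by Lemma \ref{lem condition degree not zero}, and the non-vanishing integral hypothesis is assumed outright.

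Next, applying Lemma \ref{lem condition degree not zero} at each critical point gives, for $t$ sufficiently large, both the conclusion that
\[
\int_{\Sn}K\circ\varphi_{P,t}(x)\,x\neq 0\quad\mbox{for all }P\in\Sn,
\]
and the $L^2$ estimate \eqref{eq:lem condition for not degree}, $\|K\circ\varphi_{P,t}-K(P)\|^2_{L^2(\Sn)}\leq o(|G(P,t)|)$ as $t\to\infty$, uniformly in $P$. Setting $\omega(t)$ to be any nonincreasing continuous majorant of the implicit $o(1)$ which tends to $0$, the estimate recasts as \eqref{condition for not degree}. This produces the first nontrivial input required by Theorem \ref{perturbation}.

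Finally, the remaining hypotheses of Theorem \ref{perturbation} are already among the assumptions of the corollary: we have $\sigma\in[1/2,1)$, the smallness $\|K-1\|_{L^{\infty}(\Sn)}\leq\va_4$, and the degree assumption \eqref{eq:degree not zero}. Invoking Theorem \ref{perturbation} simultaneously yields a positive $C^2$ solution of \eqref{main equ} and the desired Leray--Schauder degree identity \eqref{eq: degree}. The only mildly delicate point is confirming that the isolated-critical-point/compactness argument of the first step produces a uniform $d>0$ and allows one single $\omega(t)$ to work for \emph{all} $P\in\Sn$ (both near and far from the critical set), which is handled by Lemma \ref{lem condition for not degree} in the far regime and by the local expansion via Lemma \ref{lem condition degree not zero} in the near regime. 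Beyond this, the proof is essentially a verification that the hypotheses line up.
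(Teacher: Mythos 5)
Your proposal is correct and takes essentially the same route as the paper, which simply cites Theorem \ref{perturbation} and Lemma \ref{lem condition degree not zero}. You fill in the verification that the corollary's hypotheses supply those of Lemma \ref{lem condition degree not zero} (isolated critical points, lower bound $d$ on $|\nabla K|$ away from the critical set, constants $A_6,A_7$), which the paper leaves implicit; this is a helpful elaboration rather than a different argument.
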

\begin{proof}
It follows from Theorem \ref{perturbation} and Lemma \ref{lem condition degree not zero}.
\end{proof}

\begin{cor}\label{cor2 of existence}
Suppose $K\in C^{1,1}(\Sn)$ is some positive function satisfying for each critical point $P_0\in\Sn$ of $K$, there exists some $\beta=\beta(P_0)\in(1,n)$ such that in some geodesic normal coordinate system centered at $P_0$,
\[
K(y)=K(0)+Q^{(\beta)}(y)+R(y),\quad \mbox{for all }y\mbox{ close to }0,
\]
where $Q^{(\beta)}(\lda y)=\sum_{j=1}^{n}a_{j}|y_j|^{\beta}$, $a_j=a_j(\xi_0)\neq 0$, $\sum_{j=1}^n a_j\neq 0$, and
$R(y)$ denotes some quantity satisfying $\lim_{y\to 0}R(y)|y|^{-\beta} =0$ and $\lim_{y\to 0}|\nabla R(y)||y|^{1-\beta}=0$. Then for $t$ large enough,
\[
\int_{\Sn}K\circ\varphi_{P,t}(x)x\neq 0\quad\mbox{for all}\quad P\in\Sn,
\]
and
\[
\begin{split}
&\deg\left(\int_{\Sn}K\circ\varphi_{P,t}(x)x,B^{n+1},0\right)\\
&\quad=\sum_{\xi\in \Sn\mbox{ such that }\nabla_{g_{\Sn}}K(\xi)=0,\  \sum_{j=1}^na_j(\xi)<0}(-1)^{i(\xi)}- (-1)^n,
\end{split}
\]
where
\[
 i(\xi)=\#\{a_j(\xi): \nabla_{g_{\Sn}}K(\xi)=0,a_j(\xi)<0,1\leq j\leq n\}.
\]
If we further assume that
\[
\|K-1\|_{L^{\infty}(\Sn)}\leq \va_4
\]
and
\[
\sum_{\xi\in \Sn\mbox{ such that }\nabla_{g_{\Sn}}K(\xi)=0,\  \sum_{j=1}^na_j(\xi)<0}(-1)^{i(\xi)}\neq(-1)^n,
\]
then for $\sigma\in [1/2,1)$, \eqref{main equ} has at least one positive $C^2$ solution $v$. Furthermore, for every $0<\al<1$ satisfying that $\al+2\sigma$ is not an integer and sufficiently large positive constant $C_2$, we have
\[
\begin{split}
&\deg(v-(P_{\sigma})^{-1}K|v|^{4\sigma/(n-2\sigma)}v,\\
&\quad\quad \mathcal{N}_3(t)\cap\{v\in C^{2\sigma+\al}\ |\ \|v\|_{C^{2\sigma+\al}}<C_2, 0\})\\
&\quad=(-1)^n\sum_{\xi\in \Sn\mbox{ such that }\nabla_{g_{\Sn}}K(\xi)=0,\  \sum_{j=1}^na_j(\xi)<0}(-1)^{i(\xi)}-1.
\end{split}
\]
\end{cor}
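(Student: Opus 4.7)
My strategy is to deduce the corollary from Corollary \ref{cor1 of existence} by specializing to the homogeneous form $Q^{(\beta)}(y)=\sum_{j=1}^n a_j|y_j|^\beta$ and then evaluating the topological degree $\deg\bigl(\int_{\Sn}K\circ\varphi_{P,t}(x)\,x,\,B^{n+1},\,0\bigr)$ explicitly. The argument breaks into two tasks: (a) verifying the hypotheses of Corollary \ref{cor1 of existence} for this specific $Q^{(\beta)}$, and (b) identifying the degree with the stated combinatorial formula $\sum_\xi(-1)^{i(\xi)}-(-1)^n$. Once task (b) is done, the existence statement is automatic from Theorem \ref{perturbation} under $\|K-1\|_{L^\infty}\le\va_4$ and the hypothesis that the sum is not $(-1)^n$.

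For (a), the gradient bound $|\nabla Q^{(\beta)}(y)|\sim|y|^{\beta-1}$ follows directly: $\partial_j Q^{(\beta)}=\beta a_j\,\mathrm{sgn}(y_j)|y_j|^{\beta-1}$ yields the upper bound immediately, while for the lower bound one uses $|\nabla Q^{(\beta)}|^2\ge\beta^2(\min_j a_j^2)\sum_j|y_j|^{2(\beta-1)}\ge c(n,\beta)|y|^{2(\beta-1)}$ via the elementary bound $\sum_j t_j^{\beta-1}\ge n^{1-\beta}(\sum_j t_j)^{\beta-1}$ (from $\max_j t_j\ge \frac{1}{n}\sum t_j$) applied with $t_j=|y_j|^2$, using that all $a_j\neq0$. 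For the non-vanishing of the $(n+1)$-vector in Lemma \ref{lem condition degree not zero}, I split on whether $\eta=0$. When some $\eta_j\ne0$, the $j$-th gradient component is $\beta a_j\int\mathrm{sgn}(y_j+\eta_j)|y_j+\eta_j|^{\beta-1}(1+|y|^2)^{-n}\,dy$, and a reflection argument using the strict decrease of $(1+|y|^2)^{-n}$ in $|y_j|$ gives this the sign of $a_j\eta_j$, hence nonzero. When $\eta=0$ the gradient integral vanishes by odd symmetry, but the scalar piece equals $(\sum_j a_j)\cdot I$ with $I=\int_{\R^n}|y_1|^\beta\frac{|y|^2-1}{(1+|y|^2)^{n+1}}dy$; integrating out $(y_2,\dots,y_n)$ using the standard $(A+|y'|^2)^{-k}$ integrals and simplifying reduces $I$ to an explicit beta-function expression proportional to $\beta/n$, so $I>0$, and the hypothesis $\sum_j a_j\ne0$ forces the scalar component to be nonzero.

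For (b), I would analyze $G(P,t):=|\Sn|^{-1}\int_{\Sn}K\circ\varphi_{P,t}(x)\,x\,dv$ as $t\to\infty$. After stereographic projection from $P$ and the $y\mapsto ty$ rescaling: for $P$ at positive distance from the critical set (an isolated finite set, by the local form of $K$ and compactness), $|\nabla K(P)|$ is bounded below, and the first-order Taylor expansion of $K$ along $\varphi_{P,t}$-orbits yields $G(P,t)\ne 0$. Near each critical point $\xi$, substituting $K(y)=K(\xi)+\sum_j a_j(\xi)|y_j|^\beta+R(y)$ and rescaling shows the leading-order contribution to $G$ is a shifted integral of $Q^{(\beta)}(\xi)$, and the local index of $G$ on $\partial B^{n+1}=\Sn$ at $\xi$ is exactly $(-1)^{i(\xi)}$ when $\sum_j a_j(\xi)<0$ (the radial component on $\partial B^{n+1}$ points inward, so the index contributes), and vanishes under the other sign. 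The $-(-1)^n$ correction arises from the interior point $p=0$, where $\varphi_{0}=\mathrm{Id}$ reduces $G$ to the constant center-of-mass vector of $K$, contributing the index of a trivial constant-sign radial field on the ball. A Poincar\'e--Hopf-type summation then yields the formula, paralleling the computations in \cite{BC,CY87,Li95}. Under the assumption $\sum(-1)^{i(\xi)}\ne(-1)^n$ the degree is nonzero, and Theorem \ref{perturbation} together with the smallness $\|K-1\|_{L^\infty}\le\va_4$ produces a positive $C^2$ solution and the identity \eqref{eq: degree}.

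The main obstacle is task (b): one must extract the sharp asymptotic expansion of $G(P,t)$ in shrinking neighborhoods of each critical point of $K$ and correctly identify the local boundary indices, including the sign-selection encoded by $\sum_j a_j(\xi)<0$. Task (a) is entirely concrete calculation, and once the degree has been computed and shown nonzero, the passage to existence of a solution is immediate from Theorem \ref{perturbation}.
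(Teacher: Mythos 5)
Your overall strategy is the same as the paper's: reduce the corollary to Corollary \ref{cor1 of existence} after verifying its hypotheses for the specific $Q^{(\beta)}(y)=\sum_j a_j|y_j|^\beta$, and then invoke the degree identity $\deg\bigl(\int_{\Sn}K\circ\varphi_{P,t}\,x,B^{n+1},0\bigr)=\sum_\xi(-1)^{i(\xi)}-(-1)^n$, which the paper attributes to the proof of Corollary 6.2 in \cite{Li95}. Your task (a) is carried out correctly and usefully makes explicit what the paper leaves to the reader: the two-sided bound $|\nabla Q^{(\beta)}|\sim|y|^{\beta-1}$ (using $\max_j|y_j|\ge|y|/\sqrt n$ and $\beta>1$) and the non-vanishing of the $(n+1)$-vector condition (reflection argument for $\eta\neq0$, and for $\eta=0$ the scalar reduces to $(\sum_j a_j)\,I$ with $I>0$ by the Beta-function identity, using $\sum_j a_j\neq0$).

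Where you go astray is the explanatory account of where $-(-1)^n$ comes from. You write that it ``arises from the interior point $p=0$, where $\varphi_0=\mathrm{Id}$ reduces $G$ to the constant center-of-mass vector of $K$, contributing the index of a trivial constant-sign radial field.'' This does not hold up: the Brouwer degree $\deg\bigl(G(\cdot,t),B^{n+1},0\bigr)$ is determined entirely by the restriction of $G$ to $\partial B^{n+1}\cong\Sn$ (once $G\neq0$ on the boundary), and the value of $G$ at the interior point $p=0$ cannot supply an index term. In \cite{Li95}, the $-(-1)^n$ is not an interior contribution but emerges from the combinatorics of the boundary map: for large $t$ the boundary behavior of $G$ is governed by $\nabla K$ and the radial component's sign near each critical point, and the resulting count is related to the full signed count $\sum_{\xi:\,\nabla K(\xi)=0}(-1)^{i(\xi)}=1+(-1)^n=\chi(\Sn)$, with only those $\xi$ having $\sum_j a_j(\xi)<0$ surviving. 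Since you ultimately defer to the computation of \cite{Li95} (as does the paper), your argument is not derailed, but the proposed heuristic for $-(-1)^n$ is misleading and should be discarded rather than filled in along those lines.
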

\begin{proof}
It follows from the proof of Corollary 6.2 in \cite{Li95} and Corollary \ref{cor1 of existence}.
\end{proof}

\begin{thm}\label{thm:biggest one}
Let $\sigma \in (0,1)$. Suppose that $K\in C^{1,1}(\Sn)$, for some constant $A_1>0$,
\[
 1/A_1\leq K(\xi)\leq A_1\quad  \mbox{for all }\xi\in \Sn.
\]
Suppose also that for any critical point $\xi_0$ of $K$,
under the stereographic projection coordinate system $\{y_1, \cdots, y_n\}$ with $\xi_0$ as south pole, there exist
some small neighborhood $\mathscr{O}$ of $0$, a positive constant $L$, and $\beta=\beta(\xi_0)\in(n-2\sigma, n)$ such that
\[
 \|\nabla^{[\beta]}K\|_{C^{\beta-[\beta]}(\mathscr{O})}\leq L
\]
and
\[
 K(y)=K(0)+Q_{(\xi_0)}^{(\beta)}(y)+R_{(\xi_0)}(y)\quad \mbox{in }\mathscr{O},
\]
where $Q_{\xi_0}^{(\beta)}(y)\in C^{[\beta]-1,1}(\mathbb{S}^{n-1})$ satisfies
$Q_{\xi_0}^{(\beta)}(\lda y)=\lda^{\beta}Q_{\xi_0}^{(\beta)}(y)$, $\forall \lda >0$, $y\in \R^n$,
\[
 |\nabla Q^{(\beta)}(y)|\sim |y|^{\beta-1}\quad y\in \mathscr{O},
\]
\[
\left(
\begin{array}{l}
 \int_{\R^n}\nabla Q^{(\beta)}(y+\eta)(1+|y|^2)^{-n}\,\ud y\\[2mm]
\int_{\R^n}Q^{(\beta)}(y+\eta)(1+|y|^2)^{-n}\,\ud y
\end{array} \right)\neq 0, \quad \forall\  \eta\in \R^n,
\]
and
\[
\left(
\begin{array}{l}
 \int_{\R^n}\nabla Q^{(\beta)}(y+\eta)(1+|y|^2)^{-n}\,\ud y\\[2mm]
\int_{\R^n}Q^{(\beta)}(y+\eta)\frac{|y|^2-1}{|y|^2+1}(1+|y|^2)^{-n}\,\ud y
\end{array} \right)\neq 0, \quad \forall\  \eta\in \R^n,
\]
and $R_{\xi_0}(y)\in C^{[\beta]-1,1}(\mathscr{O})$ satisfies $\lim_{y\to 0}\sum_{s=0}^{[\beta]}|\nabla^sR|{\xi_0}(y)|y|^{-\beta+s}=0$. Then for any $\va, \delta>0$, there exists a positive constant $C(K, n,\delta,\va)$ such that for all $\va\le\mu\le 1$ and for all $\delta\le\sigma\le 1-\delta$, every positive solution $v$ of \eqref{main equ} with $K$ replaced by $K_\mu=\mu K+(1-\mu)$ satisfies
\be\label{eq:final1}
 1/C(K,n,\delta,\va)\leq v\leq C(K,n,\delta,\va)\quad \mbox{on }\Sn.
\ee
Also, for large $t$,
\be\label{eq:final2}
\int_{\Sn}K\circ\varphi_{P,t}(x)x\neq 0 \quad\mbox{for all}\quad P\in \Sn.
\ee
If we further assume that
\be\label{eq:final3}
\deg\left(\int_{\Sn}K\circ\varphi_{P,t}(x)x,B^{n+1},0\right)\neq 0,
\ee
then \eqref{main equ} has at least one $C^2$ positive solution.
\end{thm}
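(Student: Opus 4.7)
The proof will split along the three assertions \eqref{eq:final1}--\eqref{eq:final3}. For \eqref{eq:final1}, I would invoke directly the blow-up analysis and compactness theorem established in Part~I \cite{JLX}. The hypotheses of Theorem~\ref{thm:biggest one} are precisely those used there to rule out blow-up, and they transfer to the family $K_\mu=\mu K+(1-\mu)$ without loss: the set of critical points of $K_\mu$ coincides with that of $K$, and near each such critical point $\xi_0$ one has
\[
K_\mu(y)=K_\mu(0)+\mu\, Q^{(\beta)}_{(\xi_0)}(y)+\mu\, R_{(\xi_0)}(y),
\]
so the flatness order $\beta$, the non-degeneracy of the leading homogeneous part, and the remainder bound all persist with constants depending only on $K,n,\va,\delta$. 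The bound is uniform in $\sigma\in[\delta,1-\delta]$ because the estimates in \cite{JLX} are quantitatively uniform on compact subintervals of $(0,1)$. Assertion \eqref{eq:final2} is immediate from Lemma~\ref{lem condition degree not zero}: the hypotheses on $|\nabla Q^{(\beta)}|$ and the two vector integral non-degeneracy conditions are exactly those assumed here on $K$.

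For existence under \eqref{eq:final3}, the plan is to compute the Leray--Schauder degree of
\[
\Phi_{\sigma,\mu}(v):=v-P_\sigma^{-1}\bigl(K_\mu\,|v|^{\frac{4\sigma}{n-2\sigma}}v\bigr)
\]
on the open set $\mathcal O_{\sigma,C}:=\mathcal N_3(t)\cap\{v\in C^{2\sigma+\al}:\|v\|_{C^{2\sigma+\al}}<C\}$, where $\al\in(0,1)$ is chosen with $\al+2\sigma\notin\mathbb Z$, $t$ is taken large enough that \eqref{eq:final2} holds, and $C$ is larger than the constant $C(K,n,\delta,\va)$ from \eqref{eq:final1}. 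For $\sigma\in[1/2,1-\delta]$ and $\mu_0>0$ sufficiently small that $\|K_{\mu_0}-1\|_{L^\infty(\Sn)}\le\va_4$, Corollary~\ref{cor1 of existence} applied to $K_{\mu_0}$ yields
\[
\deg(\Phi_{\sigma,\mu_0},\mathcal O_{\sigma,C},0)=(-1)^n\deg\Big(\int_{\Sn}K_{\mu_0}\circ\varphi_{P,t}(x)\,x,\ B^{n+1},\ 0\Big).
\]
Because $\int_{\Sn}x\,\ud vol_{g_{\Sn}}=0$ one has $\int_{\Sn}K_{\mu_0}\circ\varphi_{P,t}\,x=\mu_0\int_{\Sn}K\circ\varphi_{P,t}\,x$, so the right-hand degree equals that of $\int_{\Sn}K\circ\varphi_{P,t}\,x$, which is nonzero by \eqref{eq:final3}.

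I would then propagate this nonvanishing by two successive homotopies, each made admissible by \eqref{eq:final1}. First, homotope $\mu$ from $\mu_0$ to $1$ at fixed $\sigma\in[1/2,1-\delta]$: by \eqref{eq:final1} every zero of $\Phi_{\sigma,\mu}$ satisfies $1/C\le v\le C$ and $\|v\|_{C^2}\le C$, so the zero set remains in the interior of $\mathcal O_{\sigma,C}$ and the degree is preserved; nonvanishing at $\mu=1$ yields a positive $C^2$ solution of \eqref{main equ} for every $\sigma\in[1/2,1-\delta]$. Second, for $\sigma\in(0,1/2)$, homotope $\sigma$ from $1/2$ down to the target value at $\mu=1$; again \eqref{eq:final1} supplies a uniform-in-$\sigma$ bound on positive solutions, and continuity of $P_\sigma^{-1}$ in $\sigma$ makes the zero set stay inside the admissible region, so the degree remains nonzero and a solution exists for every $\sigma\in(0,1)$.

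The main obstacle I anticipate is the second homotopy, because the function space $C^{2\sigma+\al}$ itself depends on $\sigma$. I would resolve this by embedding the degree computation into a fixed ambient H\"older space $C^{\al'}$ for some $\al'$ slightly below $2\delta+\al$, verifying that $v\mapsto P_\sigma^{-1}(K|v|^{4\sigma/(n-2\sigma)}v)$ is a continuous family of compact operators in $\sigma$ on sets of functions bounded uniformly above and below away from $0$. The key point is that the admissibility of both homotopies depends not merely on the upper bound but also on the strictly positive lower bound $v\ge 1/C$ in \eqref{eq:final1}, which prevents the zero set from escaping to the boundary $\{v=0\}$ of the positive cone as either $\mu$ or $\sigma$ varies.
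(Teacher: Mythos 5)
There is a genuine gap in the $\mu$--homotopy. You propose to homotope $\mu$ from $\mu_0$ to $1$ on the set $\mathcal O_{\sigma,C}:=\mathcal N_3(t)\cap\{v\in C^{2\sigma+\al}:\|v\|_{C^{2\sigma+\al}}<C\}$, and you justify admissibility solely by \eqref{eq:final1}, i.e.\ the uniform bounds $1/C\le v\le C$ and $\|v\|_{C^2}\le C$. But the boundary of $\mathcal O_{\sigma,C}$ has a second piece coming from the constraint $v\in\mathcal N_3(t)$, namely $\partial\mathcal N_3(t)$, which includes the set $\{\|w-1\|_{H^\sigma}=\va_3\}$ after the decomposition $v=\varpi(w,p)$ (up to normalization). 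The a priori bounds \eqref{eq:final1} control $v$ pointwise and in $C^2$, and they do force $p$ to stay away from $\partial B^{n+1}$, but they do not imply $\|w-1\|_{H^\sigma}\le\va_3$. For $\mu$ close to $1$, $K_\mu$ is no longer a small perturbation of $1$, and there is no reason the solutions remain in the tubular neighborhood $\mathcal N_3(t)$ of the bubble manifold; they may well cross $\{\|w-1\|_{H^\sigma}=\va_3\}$ during the homotopy, making the degree change. So the assertion ``the zero set remains in the interior of $\mathcal O_{\sigma,C}$'' is not justified.

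The paper's argument avoids this by working on a set \emph{without} the $\mathcal N_3(t)$ constraint, namely $C^{2\sigma+\al}(\Sn)\cap\{1/C^*\le v\le C^*\}$, whose boundary is controlled by the a priori bounds alone. To transfer the degree formula of Corollary~\ref{cor1 of existence} (which lives on $\mathcal N_3(t)\cap\{\|v\|_{C^{2\sigma+\al}}<C_2\}$) to this larger set, they first prove a Claim: there is $\va_7>0$ such that every solution $v$ of \eqref{main equ} with $K=K_\mu$, $0\le\mu\le\va_7$, has $\varpi^{-1}(v/\|v\|)=(w,p)$ with $w\in\mathcal N_1$. This is shown by a compactness/contradiction argument using Theorem~5.2 of \cite{JLX} (at most one isolated simple blow-up point for small $\mu$) and then ruling out blow-up, so that along a sequence $\mu\to0$ one gets $w_\mu\to1$ in $H^\sigma$. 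With this Claim, at $\mu=\va_7$ every zero in $\{1/C^*\le v\le C^*\}$ lies in $\mathcal N_3(t)$, so by excision the degrees on the two sets coincide, and only then is the homotopy run up to $\mu=1$ on the unconstrained set. This is the step your proof is missing, and it is not a cosmetic detail: without it the admissibility of your first homotopy is unestablished. Your treatment of \eqref{eq:final1}, \eqref{eq:final2}, the identity $\int_{\Sn}K_{\mu_0}\circ\varphi_{P,t}\,x=\mu_0\int_{\Sn}K\circ\varphi_{P,t}\,x$, and the $\sigma$--homotopy for $\sigma\in(0,1/2)$ are all in line with the paper.
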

\begin{proof}
Given that $\delta\le \sigma\le 1-\delta$, the estimates established in \cite{JLX} depend on $\delta$ instead of $\sigma$. Hence, \eqref{eq:final1} has actually been proved in \cite{JLX}. \eqref{eq:final2} follows from Lemma \ref{lem condition degree not zero}. In the following we will show the existence part. We first consider the case $\sigma\in[1/2,1)$.

Claim: there exists some constant $\va_7>0$ such that for $0\le\mu\le\va_7$ we have $\|K_u-1\|_{L^{\infty}(\Sn)}<\va_4$, and if $v$ is any solution of \eqref{main equ} with $K=K_\mu (0\le \mu\le \va_7)$ and $\varpi^{-1}v= (w,p)$, $(w,p)\in\M_0\times B^{n+1}$, then $w\in\mathcal{N}_1$.

This claim can be proved by contradiction. Suppose along a subsequence of $\mu\to 0$, there exists $v_\mu$ satisfying \eqref{main equ} with $K=K_\mu$, but $\|w-1\|_{H^{\sigma}}\ge \va_3$ where $\varpi^{-1}v_\mu= (w_\mu,p_\mu)$, $(w_\mu,p_\mu)\in\M_0\times B^{n+1}$. It follows from Theorem 5.2 in \cite{JLX} that after passing to a subsequence, either $\{v_\mu\}$ stays bounded in $L^{\infty}(\Sn)$ or it has precisely one isolated simple blow up point. It is clear that $w_\mu$ satisfies
\be\label{eq:wmu}
P_{\sigma}(w_\mu)=c(n,\sigma)(K_\mu\circ\varphi_\mu)w_\mu^{(n+2\sigma)/(n-2\sigma)},
\ee
where $\varphi_\mu$ is the conformal transformation corresponding to $p_\mu$. It follows that $w_\mu\in C^{2\sigma+\al}$ for any $\al\in (0,1)$ satisfying that $2\sigma+\al$ is not an integer.

It is not difficult to see from the estimates on isolated simple blow up point in \cite{JLX}, \eqref{eq:wmu} and local estimates established in \cite{JLX} that in either case we have, after passing to a subsequence,
\[
w_\mu\to w \quad \mbox{in } C^{\beta}(\Sn)\quad\mbox{and}\quad w_\mu\rightharpoonup w \quad\mbox{weakly in } H^{\sigma}(\Sn)
\]
for some $w\in \M_0, w>0, \beta\in (0,1)$. Sending $\mu$ to $0$, we have
\[
P_{\sigma}(w)=c(n,\sigma)w^{(n+2\sigma)/(n-2\sigma)}.
\]
It follows that $w\equiv 1$. Using \eqref{eq:wmu} again, we have
\[
\|w_\mu-1\|_{H^{\sigma}}=\int_{\Sn} (w_\mu-1)P_{\sigma}(w_\mu-1)\leq C\int_{\Sn}|w_\mu-1|\to 0\quad \mbox{as}\quad \mu\to 0.
\]
This is a contradiction. The claim is proved.

On the other hand, it follows from Theorem 5.2 and Theorem 5.3 in \cite{JLX} and the Harnack inequality that there exists some constant $C^*>1$ such that for all $\va_7\le\mu\le 1$,
\[
1/C^* \le v_\mu \le C^*.
\]
where $v_\mu$ is any solution of \eqref{main equ} with $K=K_\mu$.

It follows from the homotopy property of the Leray Schauder degree and Corollary \ref{cor1 of existence} that
\[
\begin{split}
&\deg(v-(P_{\sigma})^{-1}Kv^{(n+2\sigma)/(n-2\sigma)}, C^{2\sigma+\al}(\Sn)\cap\{1/C^* \le v_\mu \le C^*\},0)\\
&\quad=\deg(v-(P_{\sigma})^{-1}K_{\va_7}v^{(n+2\sigma)/(n-2\sigma)}, C^{2\sigma+\al}(\Sn)\cap\{1/C^* \le v_\mu \le C^*\},0)\\
&\quad=(-1)^n\deg\left(\int_{\Sn}K_{\va_7}\circ\varphi_{P,t}(x)x,B^{n+1},0\right)\\
&\quad=(-1)^n\deg\left(\int_{\Sn}K\circ\varphi_{P,t}(x)x,B^{n+1},0\right)\\
&\quad\neq 0.
\end{split}
\]
The existence of solutions of \eqref{main equ} for $\sigma\ge 1/2$ follows immediately.

For the case $\sigma\in (0,1/2)$, we consider the problem for $\sigma_t=t\sigma+2(1-t)/3$, and the existence for $\sigma$ follows from a degree argument.
\end{proof}

\begin{proof}[Proof of the existence part of Theorem \ref{general exist}]
It follows from Theorem \ref{thm:biggest one} and Corollary \ref{cor2 of existence}.
\end{proof}

\section{A fractional Aubin inequality}\label{improved inequality}
Let
\[
 \M^p=\left\{v \in H^{\sigma}(\mathbb{S}^n):\dashint_{\mathbb{S}^n}|v|^{p}\,\ud v_{g_{\Sn}}=1\right\},
\]
\[
 \M_0^p=\left\{v\in \M:\dashint_{\mathbb{S}^n} x|v|^{p}\,\ud v_{g_{\Sn}}=0\right\}.
\]
The Sobolev inequality \eqref{pe1} states that
\[
\min_{v\in \M^{\frac{2n}{n-2\sigma}} }\dashint vP_{\sigma}(v)\geq P_{\sigma}(1).
\]
\begin{prop}\label{aubin inequality}
For $\sigma\in (0,1)$, $n\geq 2$, $2<p\leq\frac{2n}{n-2\sigma}$, given any $\va>0$, there exists some constant $C_{\va}\ge 0$ such that
\be\label{eq:aubin inequality}
\inf_{v\in \M^{p}_0}\left\{2^{\frac{2}{p}-1}(1+\va)\dashint_{\Sn} v P_{\sigma}(v)+C_{\va}\dashint _{\Sn}v^2\right\}\geq P_{\sigma}(1).
\ee
\end{prop}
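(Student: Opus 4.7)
The plan is to argue by contradiction. Suppose (\ref{eq:aubin inequality}) fails for some $\va_0>0$; then for each $k\ge 1$ there exists $v_k\in\M_0^p$ with
\[
2^{\frac{2}{p}-1}(1+\va_0)\dashint_{\Sn}v_kP_\sigma(v_k)+k\dashint_{\Sn}v_k^2<P_\sigma(1).
\]
The second term forces $\dashint v_k^2\to 0$ and the first keeps $\dashint v_kP_\sigma(v_k)$ bounded. Hence $\{v_k\}$ is bounded in $H^\sigma(\Sn)$, and after extracting a subsequence $v_k\rightharpoonup 0$ weakly in $H^\sigma(\Sn)$ and a.e.\ on $\Sn$.

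If $p<\frac{2n}{n-2\sigma}$ the embedding $H^\sigma(\Sn)\hookrightarrow L^p(\Sn)$ is compact, so $v_k\to 0$ strongly in $L^p$, contradicting $\dashint|v_k|^p=1$. In the critical case $p=\frac{2n}{n-2\sigma}$ I would apply a concentration-compactness decomposition of P.-L.\ Lions type adapted to $P_\sigma$: along a further subsequence
\[
|v_k|^{p}\,\ud vol_{g_{\Sn}}\rightharpoonup \sum_{i\in I}\mu_i\delta_{x_i}
\]
weakly as measures on $\Sn$, with $\mu_i>0$ and $x_i\in\Sn$, no absolutely continuous part arising because $v_k\to 0$ almost everywhere. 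Passing the two constraints in the definition of $\M_0^p$ to the limit yields
\[
\sum_{i\in I}\mu_i=\omega_n,\qquad \sum_{i\in I}\mu_i x_i=0\ \text{in }\R^{n+1}.
\]

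The balance identity forces $\max_i\mu_i\le\omega_n/2$, because $\mu_j=\mu_j|x_j|=\bigl|\sum_{i\ne j}\mu_i x_i\bigr|\le\omega_n-\mu_j$. Since $\frac{2}{p}-1<0$, this gives the key lower bound
\[
\sum_i\mu_i^{2/p}=\sum_i\mu_i\cdot\mu_i^{\frac{2}{p}-1}\ge\omega_n\left(\frac{\omega_n}{2}\right)^{\frac{2}{p}-1}=\omega_n^{2/p}\cdot 2^{1-\frac{2}{p}},
\]
the extremum being realised by two equal masses at antipodal points. A localised application of the sharp Sobolev inequality (\ref{pe1}) through smooth bump cut-offs $\phi_i$ shrinking to $x_i$ then produces the pointwise-in-$i$ bound $\liminf_k\int_{\Sn}\phi_i\,v_kP_\sigma(v_k)\ge P_\sigma(1)\omega_n^{2\sigma/n}\mu_i^{2/p}$; summing over $i$ and using $\frac{2\sigma}{n}+\frac{2}{p}=1$ at the critical exponent yields
\[
\liminf_{k\to\infty}\dashint_{\Sn}v_kP_\sigma(v_k)\ge P_\sigma(1)\cdot 2^{1-\frac{2}{p}}.
\]
Multiplication by $2^{\frac{2}{p}-1}(1+\va_0)$ produces a lower bound $(1+\va_0)P_\sigma(1)>P_\sigma(1)$, contradicting the displayed strict inequality.

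The hard part will be making the concentration step rigorous for the non-local $P_\sigma$: both the atomic decomposition of $|v_k|^p$ and the localised Sobolev lower bound $\nu_i\ge P_\sigma(1)\omega_n^{2\sigma/n}\mu_i^{2/p}$ require care because $P_\sigma$ does not commute with cut-off functions. I would address this either by lifting to the Caffarelli--Silvestre / Chang--Gonz\'alez harmonic extension, under which $P_\sigma$ becomes a local degenerate elliptic Dirichlet-to-Neumann operator admitting the classical Lions cut-off argument, or by using the Gagliardo-type representation (\ref{description of P sigma}) of $P_\sigma$ together with a fractional Sobolev concentration-compactness principle on $\Sn$.
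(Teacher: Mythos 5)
Your approach---contradiction plus a Lions-type concentration-compactness decomposition---is genuinely different from the paper's, which is a constructive partition-of-unity argument in the spirit of Aubin's original 1979 proof. The subcritical part of your argument is complete and correct. The critical-case algebra is also right: $\sum_i\mu_ix_i=0$ forces $\mu_j\le\omega_n/2$ for every $j$, whence $\sum_i\mu_i^{2/p}\ge\omega_n^{2/p}\,2^{1-2/p}$, and combined with a localised Sobolev lower bound and the identity $\frac{2\sigma}{n}+\frac{2}{p}=1$ this gives $\liminf_k\dashint_{\Sn} v_kP_\sigma(v_k)\ge 2^{1-2/p}P_\sigma(1)$, which yields the contradiction after multiplying by $2^{2/p-1}(1+\va_0)$.

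The genuine gap, which you correctly flag, is the localised Sobolev estimate for the nonlocal operator. To produce $\nu_i\ge P_\sigma(1)\omega_n^{2\sigma/n}\mu_i^{2/p}$ one applies \eqref{pe1} to $\psi_iv_k$ for a cutoff $\psi_i$ near $x_i$ and then must relate $\int(\psi_iv_k)P_\sigma(\psi_iv_k)$ back to a localised piece of the original energy. Via the Gagliardo form \eqref{description of P sigma} the discrepancy is the commutator term
\[
\iint_{\Sn\times\Sn}\frac{v_k(y)^2\bigl(\psi_i(x)-\psi_i(y)\bigr)^2 + 2\,\psi_i(x)\bigl(v_k(x)-v_k(y)\bigr)v_k(y)\bigl(\psi_i(x)-\psi_i(y)\bigr)}{|x-y|^{n+2\sigma}}\,\ud vol(x)\,\ud vol(y),
\]
and one has to show it is negligible in the concentration limit. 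This is exactly the content of the paper's long estimate of the quantity $I$ (the double integral with $h_{i-}+\va_0$), except that the paper performs it deterministically for a \emph{fixed} Aubin-type family $\{h_i\}$ built from first spherical harmonics $\xi_i$ (chosen so that $h_i\xi_i\ge 0$ and $\sum_i|h_i|^2$ is pinched near $1$), and it extracts the factor $2$ directly from the $\M_0^p$ constraint through the identity $\int_{\Sn}\xi_{i+}v^p=\int_{\Sn}\xi_{i-}v^p$. In other words, the paper never needs a concentration-compactness lemma: it trades your compactness step for an explicit, hands-on commutator estimate of the fractional Dirichlet energy. If you pursue your route you would need to prove a Lions-type principle for $P_\sigma$ on $\Sn$ (e.g.\ via the Caffarelli--Silvestre extension), and the core commutator bound required there is essentially the same computation the paper carries out by hand. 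Your proof sketch is therefore a legitimate alternative strategy, but the step you defer is precisely where all the fractional-specific work resides.

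A secondary remark: the paper's constructive argument is robust as $p$ varies in a compact subinterval of $(2,\frac{2n}{n-2\sigma}]$, which is what is implicitly used in the proof of Proposition \ref{aubin-sobolev inequality} where $p_k\to\frac{2n}{n-2\sigma}$. Your contradiction argument treats $p$ as fixed, and the subcritical compact embedding degenerates as $p$ approaches the critical exponent, so the $C_\va$ you obtain does not obviously stay bounded in $p$; this is worth keeping in mind if the inequality is to be used with a varying exponent.
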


When $\sigma=1$, the above proposition was proved by Aubin \cite{Au79}. See also \cite{D1} for such inequality in some higher order Sobolev spaces. 

\begin{proof}
 First of all, by H\"older inequality, \eqref{pe1} and \eqref{description of P sigma}, we have for all $v\in H^{\sigma}(\Sn)$,
\be\label{p-sobolev}
\begin{split}
\left(\int_{\Sn}v^p\right)^{\frac 2p}&\leq K^2\int_{\Sn}vP_{\sigma}(v)\\
&=K^2P_{\sigma}(1)\int_{\Sn}v^2+\frac{K^2c_{n,-\sigma}}{2}\iint_{\Sn\times\Sn}\frac{(v(x)-v(y))^2}{|x-y|^{n+2\sigma}},
\end{split}
\ee
where $K^2:=|\Sn|^{\frac 2p -1}(P_{\sigma}(1))^{-1}$.
 Let $\eta\in(0,\frac12)$ be chosen later. Let $\Lambda$ be the space of first spherical harmonics. As shown in \cite{Au79}, there exists $\{\xi_i\}_{i=1,\cdots,k}\subset\Lambda$ such that $1+\eta<\sum_{i=1}^k|\xi_i|^{\frac 2p}<1+2\eta$ with $|\xi_i|<2^{-p}$. Let $h_i\in C^1(\Sn)$ be such that $h_i\xi_i\ge 0$ on $\Sn$ and
\[
\big||h_i|^2-|\xi_i|^{\frac 2p}\big|<\left(\frac\eta k\right)^p.
\]
Then
\[
1<\sum_{i=1}^k|h_i|^2<1+3\eta,
\]
and by the mean value theorem
\[
\big||h_i|^p-|\xi_i|\big|\leq\frac p2 \left(\frac\eta k\right)^p.
\]
For any nonnegative $v\in H^{\sigma}(\Sn)$, we have,
\[
\begin{split}
\left(\int_{\Sn}v^p \right)^{\frac 2p}=\|v^2\|_{L^{\frac p2}(\Sn)}&\le \|\sum_{i=1}^k|h_i|^2v^2\|_{L^{\frac p2}(\Sn)}\\
&\le \sum_{i=1}^k\||h_i|^2v^2\|_{L^{\frac p2}(\Sn)}= \sum_{i=1}^k\left(\int_{\Sn}|h_i|^pv^p \right)^{\frac 2p}.
\end{split}
\]
Given $f: \Sn\to\R$, denote $f_+=\max(f,0)$ and $f_-=\max(-f,0)$. For $v\in \M_0^p$, one has that
\[
\int_{\Sn}\xi_{i+}v^p=\int_{\Sn}\xi_{i-}v^p.
\]
Hence for a nonnegative function $v\in \M_0^p$, it follows from \eqref{p-sobolev} and $h_i\xi_i\ge 0$ that
\[
\begin{split}
\left(\int_{\Sn}|h_i|^pv^p \right)^{\frac 2p}&=\left(\int_{\Sn}h_{i+}^pv^p +\int_{\Sn}h_{i-}^pv^p \right)^{\frac 2p}\\
&\leq \left(\int_{\Sn}\xi_{i+}v^p +\va_0^p v^p+\int_{\Sn}h_{i-}^pv^p \right)^{\frac 2p}\\
&\leq 2^{\frac 2p}\left(\int_{\Sn}\va_0^p v^p+\int_{\Sn}h_{i-}^pv^p \right)^{\frac 2p}\\
&\leq 2^{\frac 2p}\left(\int_{\Sn}(\va_0+h_{i-})^pv^p \right)^{\frac 2p}\\
&\leq 2^{\frac 2p}\left(K^2P_{\sigma}(1)\int_{\Sn}(h_{i-}+\va_0)^2v^2)+\frac{K^2c_{n,-\sigma}}{2} I\right).
\end{split}
\]
where $\va_0=(\frac p2)^{1/p}\frac {\eta}{k}$,
\[
\begin{split}
I&=\iint_{\Sn\times\Sn}\frac{((h_{i-}(x)+\va_0)v(x)-(h_{i-}(y)+\va_0)v(y))^2}{|x-y|^{n+2\sigma}}.
\end{split}
\]
Since
\[
\begin{split}
&((h_{i-}(x)+\va_0)v(x)-(h_{i-}(y)+\va_0)v(y))^2\\
&=(h_{i-}(x)-h_{i-}(y))^2v(x)^2 +(h_{i-}(y)+\va_0)^2(v(x)-v(y))^2\\
&\quad +2(h_{i-}(x)-h_{i-}(y))v(x)(h_{i-}(y)+\va_0)(v(x)-v(y)),\\
\end{split}
\]
we have
\[
\begin{split}
I
&\leq \int_{\Sn}v^2(x)\int_{\Sn}\frac{(h_{i-}(x)-h_{i-}(y))^2}{|x-y|^{n+2\sigma}}+\int_{\Sn}(h_{i-}(y)+\va_0)^2\int_{\Sn}\frac{(v(x)-v(y))^2}{|x-y|^{n+2\sigma}}\\
&\quad+2C_1\left(\iint_{\Sn\times\Sn}\frac{(v(x)-v(y))^2}{|x-y|^{n+2\sigma}}\right)^{\frac 12}\left(\iint_{\Sn\times\Sn}\frac{v^2(x)(h_{i-}(x)-h_{i-}(y))^2}{|x-y|^{n+2\sigma}}\right)^{\frac 12}\\
&\leq C_2\int_{\Sn}v^2+\int_{\Sn}(h_{i-}(y)+\va_0)^2\int_{\Sn}\frac{(v(x)-v(y))^2}{|x-y|^{n+2\sigma}}\\
&\quad+\frac{\eta}{k}\iint_{\Sn\times\Sn}\frac{(v(x)-v(y))^2}{|x-y|^{n+2\sigma}}+C_\eta \int_{\Sn}v^2(x)\int_{\Sn}\frac{(h_{i-}(x)-h_{i-}(y))^2}{|x-y|^{n+2\sigma}}\\
&\leq C\int_{\Sn}v^2+\int_{\Sn}(h_{i-}(y)+\va_0)^2\int_{\Sn}\frac{(v(x)-v(y))^2}{|x-y|^{n+2\sigma}}+\frac{\eta}{k}\iint_{\Sn\times\Sn}\frac{(v(x)-v(y))^2}{|x-y|^{n+2\sigma}},
\end{split}
\]
where in the second inequality we have used Cauchy-Schwarz inequality, $C_1=\max|h+\va_0|^2$, $C_2=\max\int_{\Sn}\frac{(h_{i-}(x)-h_{i-}(y))^2}{|x-y|^{n+2\sigma}}\ud y$, $C_\eta>0$ depends only on $C_1$ and $\eta$, $C=C_2+C_2C_\eta$.
Also we can do exactly the same in terms of $h_{i+}$. Hence
\[
\begin{split}
&2\left(\int_{\Sn}v^p\right)^{\frac 2p}\\
&\leq 2^{\frac{2}{p}}\sum_{i=1}^k\frac{K^2c_{n,-\sigma}}{2}\int_{\Sn}\big((h_{i-}(y)+\va_0)^2+(h_{i+}(y)+\va_0)^2\big)\int_{\Sn}\frac{(v(x)-v(y))^2}{|x-y|^{n+2\sigma}}\\
&+2^{\frac{2}{p}}\sum_{i=1}^k(2\frac{\eta}{k})\iint_{\Sn\times\Sn}\frac{(v(x)-v(y))^2}{|x-y|^{n+2\sigma}}+C\int_{\Sn}v^2.
\end{split}
\]
Hence for any $\va>0$, we can choose $\eta$ sufficiently small such that
\[
\begin{split}
\left(\int_{\Sn}v^p\right)^{\frac 2p}&\leq 2^{\frac 2p -2}(K^2c_{n,-\sigma}+\va)\iint_{\Sn\times\Sn}\frac{(v(x)-v(y))^2}{|x-y|^{n+2\sigma}}+C\int_{\Sn}v^2\\
&=2^{\frac 2p -1}(K^{2}+\va c^{-1}_{n,-\sigma})(\int_{\Sn}vP_{\sigma}(v)-P_{\sigma}(1)\int_{\Sn}v^2)+C\int_{\Sn}v^2.
\end{split}
\]
Then the proposition follows immediately from the above and that for $v\in H^{\sigma}(\Sn)$,
\[
\int_{\Sn}|v|P(|v|)\leq \int_{\Sn}vP(v).
\]
\end{proof}

\begin{prop}\label{aubin-sobolev inequality}
For $n\ge 2$, there exist some constants $a^*<1$ and some $p^*<\frac{2n}{n-2\sigma}$ both of which depend only on $n$ and $\sigma$, such that for all $p^*\leq p\leq \frac{2n}{n-2\sigma}$,
\be\label{eq:aubin-sobolev}
\inf_{v\in \M^p_0} a^*\dashint_{\Sn} v P_{\sigma}(v)+(1-a^*)P_{\sigma}(1)\dashint_{\Sn} v^2\geq P_{\sigma}(1).
\ee
\end{prop}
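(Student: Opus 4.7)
The plan is to argue by contradiction, invoking Proposition \ref{aubin inequality} to rule out concentration along an almost-minimizing sequence, and then performing a second-order expansion about the constant $v\equiv 1$.

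Suppose the conclusion fails. One can extract sequences $a_k\nearrow 1$, $p_k\nearrow\frac{2n}{n-2\sigma}$, and nonnegative $v_k\in\M_0^{p_k}$ (using $\int_{\Sn}|v|P_\sigma(|v|)\leq \int_{\Sn}vP_\sigma(v)$) such that
\[
a_kA_k+(1-a_k)P_\sigma(1)B_k<P_\sigma(1),\qquad A_k:=\dashint_{\Sn} v_kP_\sigma(v_k),\ B_k:=\dashint_{\Sn}v_k^2.
\]
The sharp Sobolev inequality \eqref{pe1} combined with H\"older on $\M^{p_k}$ gives $A_k\geq P_\sigma(1)$ and $B_k\leq 1$; rearranging the assumed strict inequality forces
\[
0\leq A_k-P_\sigma(1)\leq\tfrac{1-a_k}{a_k}P_\sigma(1)(1-B_k)\to 0,
\]
so $\{v_k\}$ is almost-minimizing for the Sobolev functional on $\M_0^{p_k}$.

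Next I would apply Proposition \ref{aubin inequality} with a small fixed $\va>0$: since $2^{\frac{2}{p_k}-1}(1+\va)\to 2^{-2\sigma/n}(1+\va)<1$ and $A_k\to P_\sigma(1)$, the inequality $2^{\frac{2}{p_k}-1}(1+\va)A_k+C_\va B_k\geq P_\sigma(1)$ produces a uniform lower bound $B_k\geq c_0>0$. This precludes the bubbling profile, whose $L^2$ mass tends to zero. Combined with the $H^\sigma$-boundedness of $\{v_k\}$, a standard concentration-compactness argument yields, along a subsequence, strong convergence $v_k\to v_\infty$ in $L^{p_k}$ and in $H^\sigma$, with $v_\infty\in\M_0^{2n/(n-2\sigma)}$ attaining $\dashint_{\Sn}v_\infty P_\sigma(v_\infty)=P_\sigma(1)$. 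By the classification of extremizers of \eqref{pe1}, $v_\infty$ is an Aubin-Talenti bubble; positivity together with the first-moment vanishing condition $\dashint_{\Sn}x\,v_\infty^{2n/(n-2\sigma)}=0$ forces $v_\infty\equiv 1$.

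Writing $v_k=1+u_k$ with $u_k\to 0$ in $H^\sigma$, a second-order expansion of the constraints $\dashint_{\Sn}(1+u_k)^{p_k}=1$ and $\dashint_{\Sn}x(1+u_k)^{p_k}=0$ yields
\[
\dashint_{\Sn} u_k=-\tfrac{p_k-1}{2}\dashint_{\Sn}u_k^2+o(\|u_k\|_{H^\sigma}^2),\qquad \dashint_{\Sn} x\,u_k=O(\|u_k\|_{H^\sigma}^2),
\]
and substituting into the functional gives
\[
a_kA_k+(1-a_k)P_\sigma(1)B_k-P_\sigma(1)=a_k\dashint_{\Sn} u_kP_\sigma(u_k)-(a_k+p_k-2)P_\sigma(1)\dashint_{\Sn}u_k^2+o(\|u_k\|_{H^\sigma}^2).
\]
Decomposing $u_k$ into spherical harmonics, the constraints absorb the degree-$0$ and degree-$1$ components into $o(\|u_k\|_{H^\sigma}^2)$; on the remaining degree-$\geq 2$ component, $P_\sigma$ has eigenvalue at least $\lambda_2=\frac{\Gamma(2+n/2+\sigma)}{\Gamma(2+n/2-\sigma)}$, so the quadratic form is bounded below by $\bigl(a_k\lambda_2-(a_k+p_k-2)P_\sigma(1)\bigr)\dashint_{\Sn} u_k^2+o(\|u_k\|_{H^\sigma}^2)$. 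Since $\lambda_2/P_\sigma(1)-1>0$ and $p_k-2\to\frac{4\sigma}{n-2\sigma}$, this coefficient is strictly positive whenever $a_k$ is close enough to $1$ and $p_k$ close enough to $\frac{2n}{n-2\sigma}$, contradicting the assumed strict inequality.

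The main obstacle is the compactness and identification step: one must leverage Proposition \ref{aubin inequality} inside a concentration-compactness analysis to exclude bubbling along a sequence of subcritical exponents approaching the critical one, and secure strong convergence $v_k\to 1$ in a topology strong enough to justify the pointwise Taylor expansion of the constraints. Once this is done, the closing spectral computation is routine.
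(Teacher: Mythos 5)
Your overall strategy matches the paper's: argue by contradiction, extract an almost-minimizing sequence $\{v_k\}$, invoke Proposition \ref{aubin inequality} to get a uniform $L^2$ lower bound, pass to the limit $v_k\to 1$ in $H^\sigma$, and close with the second-order expansion in $T_1\M_0^p$ and the spectral gap $\lambda_2>\lambda_1$. The one place you take a genuinely different route is the compactness step, which you correctly flag as the main obstacle. The paper restricts to subcritical $p_k$ so that $m_{a_k,p_k}$ is attained, writes the Euler--Lagrange equation for the minimizer $v_k$ with a Lagrange multiplier $\Lambda_k\in\R^{n+1}$, shows $|\Lambda_k|=O(1)$, passes to the weak limit $\bar v\ne 0$, and uses the Kazdan--Warner identity \eqref{1.3} to force $\Lambda=0$; then $\bar v$ solves $P_\sigma\bar v=P_\sigma(1)\bar v^{(n+2\sigma)/(n-2\sigma)}$, and combining \eqref{pe1} with $\dashint \bar v^{2n/(n-2\sigma)}\le 1$ gives equality, hence strong $H^\sigma$ convergence and $\bar v\equiv 1$. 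You replace this with a Brezis--Lieb/concentration-compactness argument. That route also works, but two caveats: the $L^2$ lower bound by itself only precludes a \emph{pure} bubble, not a splitting into a nontrivial bubble plus a nontrivial weak limit; to exclude the split you must additionally use that $A_k$ saturates the Sobolev constant together with the strict superadditivity of $t\mapsto t^{(n-2\sigma)/n}$, and you must keep track of the varying exponent $p_k$ in the Brezis--Lieb identity. The paper's Euler--Lagrange/Kazdan--Warner argument sidesteps Brezis--Lieb entirely by exploiting structure already established in part I, and is cleaner in this context. On the other hand, your observation that $0\le A_k-P_\sigma(1)\le\frac{1-a_k}{a_k}P_\sigma(1)(1-B_k)\to 0$ is a nice direct algebraic shortcut that the paper does not use (it obtains \eqref{eq:lim-k} via the Euler--Lagrange equation and \eqref{eq:lim-sub}). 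The final expansion step in your proposal agrees with the paper's, including the coefficient $(a+p-2)P_\sigma(1)$ and the comparison with $\lambda_1,\lambda_2$, except that the paper phrases the absorption of the degree-$0$ and degree-$1$ modes via the implicit-function-theorem parametrization in Lemma \ref{lempe4-p}, which gives the needed uniform $C^2$ control in $p$ that a bare Taylor expansion of the constraints would require one to justify.
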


When $\sigma=1$, the above proposition was proved by Chang and Yang \cite{CY} (see \cite{Li96} for another proof). See also \cite{D1} for such inequality in some higher order Sobolev spaces. Here we adapt the arguments in \cite{Li96} to show \eqref{eq:aubin-sobolev}.
\begin{proof}
For $v\in H^{\sigma}(\Sn), a>0$, set
\[
I_a(v)=a\dashint_{\Sn} v P_{\sigma}(v)+(1-a)P_{\sigma}(1)\dashint_{\Sn} v^2
\]
and
\[
m_{a,p}=\inf_{v\in \M^p_0} I_a(v).
\]
By standard variational methods, $m_{a,p}$ is achieved for $a>0$ and $2\leq p<\frac{2n}{n-2\sigma}$. Moreover, it is easy to see that
\be\label{eq:lim-sub}
\begin{split}
m_{a,p}&\leq P_{\sigma}(1)\quad\mbox{for all }0\leq a\leq 1,\ 2\leq p\leq\frac{2n}{n-2\sigma},\\
\lim\limits_{a\to 1}m_{a,p}&=P_{\sigma}(1)\quad\mbox{uniformly for }2\leq p\leq \frac{2n}{n-2\sigma}.
\end{split}
\ee
Indeed, the inequality \eqref{eq:lim-sub} follows from by taking the test function $v\equiv 1$. The equality in \eqref{eq:lim-sub} follows from Sobolev inequality and H\"older inequality.

We argue by contradiction. Suppose that \eqref{eq:aubin-sobolev} fails. Then there exist sequences $\{a_k\}$, $\{p_k\}\subset \R$, $\{v_k\}\subset \M^{p_k}_0$, such that $a_k<1, a_k\to 1, p_k<\frac{2n}{n-2\sigma}, p_k\to\frac{2n}{n-2\sigma}, v_k\geq 0$ and
\be\label{eq:contradiction}
I_{a_k}(v_k)=m_{a_k,p_k}<P_{\sigma}(1).
\ee
By \eqref{eq:contradiction} and \eqref{eq:aubin inequality}, there exists some positive constant $C(n,\sigma)$ independent of $k$ such that
\[
\|v_k\|_{H^{\sigma}(\Sn)}\leq C(n,\sigma),\quad\int_{\Sn}v_k^2\geq 1/C(n,\sigma).
\]
After passing to a subsequence, we have that $v_k\to \bar v$ weakly in $H^{\sigma}(\Sn)$ for some $\bar v\in H^{\sigma}(\Sn)\setminus\{0\}$.

The Euler-Lagrange equation (see, e.g., \eqref{pe11}) satisfied by $v_k$ is
\be\label{eq:minimizer-contrdiction}
a_kP_{\sigma}(v_k)+(1-a_k)P_{\sigma}(1)v_k=m_kv_k^{p_k-1}+\Lambda_k\cdot xv_k^{p_k-1},
\ee
where $m_k=m_{a_k,p_k}$ and $\Lambda_k\in \R^{n+1}$. Multiplying \eqref{eq:minimizer-contrdiction} by $v_k$ and integrating over $\Sn$, we have, by \eqref{eq:lim-sub}
\be\label{eq:lim-k}
\lim\limits_{k\to\infty}\left(\dashint_{\Sn}v_kP_{\sigma}(v_k)\right)=P_{\sigma}(1).
\ee

We claim that $|\Lambda_k|=O(1)$. Suppose the contrary, we let $\xi_k=\Lambda_k/|\Lambda_k|$ and after passing to a subsequence $\xi=\lim_{k\to\infty}\xi_k\in\Sn$. Let $\eta\in C^{\infty}(\Sn)$ be any smooth test function. Multiplying \eqref{eq:minimizer-contrdiction} by $\eta/|\Lambda_k|$, integrating it over $\Sn$ and sending $k\to\infty$, we have $\int_{\Sn}\xi\cdot x \bar v^{\frac{n+2\sigma}{n-2\sigma}}\eta=0$. Hence $\bar v=0$ which is a contradiction.

It is clear that $\bar v$ satisfies
\[
P_{\sigma}(\bar v)=P_{\sigma}(1)\bar v^{\frac{n+2\sigma}{n-2\sigma}}+\Lambda\cdot x\bar v^{\frac{n+2\sigma}{n-2\sigma}},
\]
where $\Lambda=\lim_{k\to\infty}\Lambda_k$. The Kazdan-Warner type identity in \cite{JLX} gives us
\[
\int_{\Sn}\nabla(P_{\sigma}(1)+\Lambda\cdot x)\nabla x\bar v^{\frac{2n}{n-2\sigma}}=0.
\]
It follows that $\Lambda=0$. Hence $\int_{\Sn}\bar vP_{\sigma}(\bar v)=P_{\sigma}(1)\int_{\Sn}\bar v^{\frac{2n}{n-2\sigma}}$. This together with \eqref{pe1} leads to $\dashint_{\Sn}\bar v^{\frac{2n}{n-2\sigma}}\geq 1$. On the other hand, $\dashint_{\Sn}\bar v^{\frac{2n}{n-2\sigma}}\leq \liminf\limits_{k\to\infty}v_k^{p_k}=1$. Hence
\[
\begin{cases}
&\dashint_{\Sn}\bar v^{\frac{2n}{n-2\sigma}}=1,\\
&\dashint_{\Sn}\bar vP_{\sigma}(\bar v)=P_{\sigma}(1).
\end{cases}
\]
This together with \eqref{eq:lim-k} leads to $v_k\to \bar v$ in $H^{\sigma}(\Sn)$. Clearly $\bar v\in\M_0^{\frac{2n}{n-2\sigma}}$ and hence $\bar v\equiv 1$.
In the following we will expand $I_a(v)$ for $v\in\M^p_0$ near $1$. Similar to Lemma \ref{lempe3},
\[
T_1\M^p_0=\mathrm{span}\{\mbox{spherical harmonics of degree } \geq 2\}.
\]
We need the following lemma which is a refined version of Lemma \ref{lempe4} and it can be proved in a similar way.
\begin{lem}\label{lempe4-p}
For $\tilde{w}\in T_1\M_0^p$, $\frac{2n-2\sigma}{n-2\sigma}\le p\le\frac{2n}{n-2\sigma}$, $\tilde{w}$ close to $0$, there exist $\mu(\tilde{w})\in \R$, $\eta(\tilde{w})\in \R^{n+1}$ being $C^2$ functions such that
\be\label{pe2-p}
 \dashint_{\mathbb{S}^n}|1+\tilde{w}+\mu+\eta\cdot x|^p=1
\ee
and
\be\label{pe3-p}
 \int_{\Sn}|1+\tilde{w}+\mu+\eta\cdot x|^px=0.
\ee
Furthermore, $\mu(0)=0, \eta(0)=0, D\mu(0)=0$ and $D\eta(0)=0$, and $\mu, \eta$ have uniform (with respect to $p$) $C^2$ modulo of continuity near $0$.
\end{lem}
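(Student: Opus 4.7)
The plan is to apply the Implicit Function Theorem to the map
\[
 F:T_1\M_0^p\times \R\times \R^{n+1}\to \R\times \R^{n+1},
\]
defined by
\[
 F(\tilde w,\mu,\eta)=\left(\dashint_{\Sn}|1+\tilde w+\mu+\eta\cdot x|^p-1,\ \dashint_{\Sn}|1+\tilde w+\mu+\eta\cdot x|^p\, x\right).
\]
Clearly $F(0,0,0)=0$. For $(\tilde w,\mu,\eta)$ close to $0$ the integrand $1+\tilde w+\mu+\eta\cdot x$ stays close to $1$ and is in particular positive almost everywhere, so the absolute value can be dropped and $F$ becomes $C^2$ jointly in its arguments because $s\mapsto s^p$ is smooth on a neighborhood of $s=1$ for every $p\ge 2$.

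The first step is to compute $\partial_{(\mu,\eta)}F(0,0,0)$. Differentiating under the integral I expect to find
\[
\partial_\mu F_1(0,0,0)=p,\quad \partial_{\eta_j}F_1(0,0,0)=p\dashint_{\Sn}x_j\,\ud vol_{g_{\Sn}}=0,
\]
\[
\partial_\mu F_2(0,0,0)=p\dashint_{\Sn}x\,\ud vol_{g_{\Sn}}=0,\quad \partial_{\eta_j}F_2(0,0,0)=p\dashint_{\Sn}x_jx\,\ud vol_{g_{\Sn}},
\]
where the last matrix is a positive multiple of the identity on $\R^{n+1}$ by symmetry of the sphere. Hence the Jacobian $\partial_{(\mu,\eta)}F(0,0,0)$ is block diagonal with strictly positive diagonal entries, and is therefore invertible. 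The Implicit Function Theorem then yields neighborhoods $U$ of $0$ in $T_1\M_0^p$ and $V$ of $0$ in $\R\times \R^{n+1}$ and unique $C^2$ functions $\mu(\tilde w),\eta(\tilde w)$ on $U$ with values in $V$ such that \eqref{pe2-p}--\eqref{pe3-p} hold. Evaluating at $\tilde w=0$ gives $\mu(0)=0,\eta(0)=0$, and differentiating the identities in $\tilde w$ at $0$ together with the constraints defining $T_1\M_0^p$ (namely $\dashint_{\Sn}\tilde w=0$ and $\dashint_{\Sn}x\tilde w=0$) forces $D\mu(0)=0$ and $D\eta(0)=0$.

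The final step, and the only genuinely new feature compared to Lemma \ref{lempe4}, is the uniformity in $p$ over $[\frac{2n-2\sigma}{n-2\sigma},\frac{2n}{n-2\sigma}]$. Since this range is a compact subinterval of $(2,\infty)$, the map $(\tilde w,\mu,\eta,p)\mapsto F(\tilde w,\mu,\eta)$ is jointly continuous, as are its derivatives up to order two in $(\tilde w,\mu,\eta)$; the inverse of $\partial_{(\mu,\eta)}F(0,0,0)$ is a continuous function of $p$ on this compact range with a uniform bound on its operator norm. The quantitative form of the Implicit Function Theorem (contraction mapping argument on a ball whose radius depends only on the bound on the inverse and the second-order modulus of continuity of $F$) then yields neighborhoods $U,V$ and a $C^2$ modulus of continuity for $(\mu,\eta)$ which are independent of $p$ in this range. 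I expect this uniformity step to be the main technical point, but it reduces to checking that the relevant derivative bounds obtained by expanding $|1+\tilde w+\mu+\eta\cdot x|^p$ around $s=1$ are uniform in $p$ on a compact interval, which is straightforward.
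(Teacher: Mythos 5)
Your proof takes essentially the same route as the paper: the paper proves this lemma by invoking the Implicit Function Theorem (it states that the lemma ``can be proved in a similar way'' to Lemma~\ref{lempe4}, which in turn refers to Lemma 6.4 of \cite{Li95}), and your computation of the Jacobian, the derivation of $D\mu(0)=D\eta(0)=0$ from the orthogonality constraints defining $T_1\M_0^p$, and the quantitative IFT argument for uniformity in $p$ over a compact subinterval of $(2,\infty)$ are all correct. One small imprecision worth flagging: you justify the $C^2$ regularity of $F$ by saying that for $\tilde w$ close to $0$ the argument of $|\cdot|^p$ ``stays close to $1$'' pointwise, but $\tilde w$ is only small in $H^\sigma(\Sn)$, which for $\sigma<n/2$ does not embed into $L^\infty(\Sn)$; the correct justification is that the Nemytskii map $v\mapsto |v|^p$ is $C^2$ from $L^p(\Sn)$ (into which $H^\sigma(\Sn)$ embeds for the relevant range of $p$) into $L^1(\Sn)$, with first and second derivatives controlled by H\"older's inequality, so the absolute value need not be dropped at all.
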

As before we will use $\tilde w$ as local coordinates of $v\in \M^p_0$.
Let
\[
 \tilde{E}(\tilde{w})=I_a(v)=a\dashint_{\Sn} v P_{\sigma}(v)+(1-a)P_{\sigma}(1)\dashint_{\Sn} v^2,
\]
where $\tilde{w}\in T_1\M_0$ and $v=1+\tilde{w}+\mu(\tilde{w})+\eta(\tilde{w})\cdot x$ as in Lemma \ref{lempe4-p}.
Hence
\[
\tilde{E}(\tilde{w})=P_{\sigma}(1)(1+2\mu(\tilde{w}))+a\dashint_{\mathbb{S}^n}\tilde{w}P_\sigma(\tilde{w})+(1-a)P_{\sigma}(1)\dashint_{\Sn}\tilde w^2+o(\|\tilde{w}\|^2_{H^\sigma(\mathbb{S}^n)}).
\]
Since
\[
\mu(\tilde w)=-\frac{p-1}{2}\dashint_{\Sn}\tilde w^2+o(\|\tilde{w}\|^2_{H^\sigma(\mathbb{S}^n)}),
\]
we have
\[
\tilde{E}(\tilde{w})=P_{\sigma}(1)+a\dashint_{\mathbb{S}^n}\tilde{w}P_\sigma(\tilde{w})-(p-2+a)P_{\sigma}(1)\dashint_{\Sn}\tilde w^2+o(\|\tilde{w}\|^2_{H^\sigma(\mathbb{S}^n)}).
\]
For $a$ close to $1$ and $p$ close to $\frac{2n}{n-2\sigma}$, we have that $(p-2+a)P_{\sigma}(1)$ is close to $\frac{n+2\sigma}{n-2\sigma}P_{\sigma}(1)$, which is the first eigenvalue of $P_{\sigma}$. Similar to \eqref{pe5}, there exists some positive constant $C(n,\sigma)$ determined by the difference of the first and the second eigenvalues of $P_{\sigma}$ such that for $a$ close to $1$ and $p$ close to $\frac{2n}{n-2\sigma}$ we have
\[
a\dashint_{\mathbb{S}^n}\tilde{w}P_\sigma(\tilde{w})-(p-2+a)P_{\sigma}(1)\dashint_{\Sn}\tilde w^2\geq \frac{1}{C(n,\sigma)}\dashint_{\Sn}\tilde wP_{\sigma}(\tilde w),
\]
which leads to that for $k$ large we have $I_{a_k}(v_k)\geq P_{\sigma}(1)$. This is a contradiction.
\end{proof}

\appendix

\section{Bessel potential spaces and conformally invariant operators on spheres}\label{spaces}

In this section, we recall some results for $P_\sigma$ and Bessel potential spaces on spheres which can be found in \cite{PS}, \cite{R}, \cite{Stri} and \cite{T}.

Let $\Delta_{g_{\Sn}}$ be the Laplace-Beltrami operator on the standard sphere. For $s>0$ and $1< p<\infty$, the Bessel potential space $H^{s}_p(\Sn)$
is the set consisting of all functions $u\in L^p(\Sn)$ such that
$(1-\Delta_{g_{\Sn}})^{s/2}u\in L^p(\Sn)$, with the norm $\|u\|_{H^{s}_p(\Sn)}:=\|(1-\Delta_{g_{\Sn}})^{s/2}u\|_{L^p(\Sn)}$. When $p=2$, $H_2^{\sigma}(\Sn)$ coincides with the Hilbert space $H^{\sigma}(\Sn)$ which is the closure of $C^{\infty}(\Sn)$ under the norm
\[
\|u\|_{H^{\sigma}(\Sn)}:=\int_{\Sn}vP_{\sigma}v \,\ud vol_{g_{\Sn}}
\]
with equivalent norms.

If $sp<n$, then the embedding $H^{s}_p(\Sn) \rightarrow L^{\frac{np}{n-sp}}(\Sn)$ is continuous and the embedding $H^{s}_p(\Sn) \hookrightarrow L^q(\Sn)$ is compact for $q<\frac{np}{n-sp}$. If $0<s-\frac{n}{p}<1$, then the embedding $H^{s}_p(\Sn) \rightarrow C^{s-\frac{n}{p}}(\Sn)$ is continuous.

It is also well-known (see, e.g., \cite{Mo}) that $P_{\sigma}$ is the inverse of the spherical Riesz potential
\be\label{P sigma inverse}
R_{2\sigma}(f)(\xi)=\frac{\Gamma(\frac{n-2\sigma}{2})}{2^{2\sigma}\pi^{n/2}\Gamma (\sigma)}\int_{\Sn}\frac{f(\zeta)}{|\xi-\zeta|^{n-2\sigma}}\,\ud vol_{g_{\Sn}}(\zeta),\quad f\in L^p(\Sn).
\ee

\begin{prop}[Pavlov and Samko \cite{PS}] \label{norms are same}
For any function $u\in L^p(\Sn)$, then $u\in H^{s}_p(\Sn)$ if and only if there exists a function $v\in L^p(\Sn)$ such that
$u=R_{s}(v)$. Moreover, there exists a positive constant $C_1$ depending only on $n,s,p$ such that
\[
\frac{1}{C_1} \|u\|_{H^{s}_p(\Sn)}\leq \|v\|_{L^p(\Sn)}\leq C_1 \|u\|_{H^{s}_p(\Sn)}.
\]
\end{prop}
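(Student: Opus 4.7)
The plan is to prove the equivalence by viewing both operators $R_s$ and $(1-\Delta_{g_{\Sn}})^{-s/2}$ as spherical Fourier multipliers and comparing their symbols. Since spherical harmonics $\{Y_{k,\ell}\}$ diagonalize both operators simultaneously, the composition $A_s := R_s \circ (1-\Delta_{g_{\Sn}})^{s/2}$ is again diagonal, and the whole claim reduces to showing $A_s$ and $A_s^{-1}$ are bounded on $L^p(\Sn)$.

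First I would compute the eigenvalues on each spherical harmonic of degree $k$. For $(1-\Delta_{g_{\Sn}})^{-s/2}$ they are $(1+k(k+n-1))^{-s/2}$, and a Funk--Hecke computation (or the Gegenbauer expansion of the kernel $|\xi-\zeta|^{-(n-s)}$) gives the eigenvalues of $R_s$ in the closed form
\[
\mu_k(R_s) \;=\; \frac{\Gamma\!\left(k+\tfrac{n-s}{2}\right)\Gamma\!\left(\tfrac{n+s}{2}\right)}{\Gamma\!\left(k+\tfrac{n+s}{2}\right)\Gamma\!\left(\tfrac{n-s}{2}\right)}.
\]
By Stirling's expansion both sequences behave like $k^{-s}$ as $k\to\infty$. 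Hence the symbol $m_k$ of $A_s$ (and likewise of $A_s^{-1}$) is a bounded, positive sequence satisfying standard Mikhlin-type discrete derivative estimates $|\Delta^{j} m_k|\lesssim k^{-j}$ for all $j\ge 0$.

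Next I would invoke a spherical multiplier theorem of Bonami--Clerc / Strichartz type (this is exactly the setting of \cite{Stri,T}) to conclude that $A_s$ and $A_s^{-1}$ are bounded on $L^p(\Sn)$ for every $1<p<\infty$. This immediately gives both directions: if $v\in L^p$ and $u=R_s(v)$, then $(1-\Delta_{g_{\Sn}})^{s/2}u = A_s^{-1} v \in L^p$, so $u\in H^s_p(\Sn)$ with $\|u\|_{H^s_p}\le C\|v\|_{L^p}$; conversely, if $u\in H^s_p(\Sn)$, then $v := (1-\Delta_{g_{\Sn}})^{s/2}u \in L^p$ and $R_s(v) = A_s\,(1-\Delta_{g_{\Sn}})^{s/2}u \cdot (A_s^{-1}A_s) = u$, with $\|v\|_{L^p}\le C\|u\|_{H^s_p}$.

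The main obstacle in this plan is the multiplier theorem step. Verifying the Mikhlin--H\"ormander condition on the sphere is delicate because one must control not just size but discrete derivatives of $m_k$; here the ratio of $\Gamma$--quotients is well-behaved precisely because the shift by $s$ in the numerator and denominator generates an asymptotic expansion with controlled error (again via Stirling). Once that expansion is in hand the multiplier bounds follow from the known $L^p$ theory for $(1-\Delta_{g_{\Sn}})^{it}$--style spectral multipliers on compact symmetric spaces, and the equivalence of norms with the stated constant depending only on $n,s,p$ is immediate.
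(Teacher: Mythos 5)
The paper itself offers no proof of this proposition; it is stated as a black-box result of Pavlov and Samko and used as such. So there is nothing in the text to compare your argument against line by line, and the right question is simply whether your plan would work. The high-level strategy — diagonalize both $R_s$ and $(1-\Delta_{g_{\Sn}})^{-s/2}$ on spherical harmonics, observe that the ratio of the two symbols is a bounded, smooth-in-$k$ multiplier, and invoke an $L^p$ multiplier theorem on $\Sn$ — is indeed the standard way to prove this equivalence and is essentially what one finds in the literature you and the paper cite. The eigenvalue computations are correct in substance (your closed form for $\mu_k(R_s)$ differs from $\Gamma(k+\frac{n-s}{2})/\Gamma(k+\frac{n+s}{2})$ by the constant $\Gamma(\frac{n+s}{2})/\Gamma(\frac{n-s}{2})$, which depends on the normalization of $R_s$ and does not affect the asymptotics), and the Stirling asymptotics $\mu_k\sim k^{-s}$ are right.

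There is, however, a genuine algebraic slip in your converse direction. You set $v:=(1-\Delta_{g_{\Sn}})^{s/2}u$ and claim $R_s(v)=u$, but by your own definition $R_s(v)=R_s\,(1-\Delta_{g_{\Sn}})^{s/2}u=A_s u$, which is not $u$ (the inserted $A_s^{-1}A_s$ does nothing, and $A_s$ is bounded but not the identity). The correct choice is $v:=R_s^{-1}u=P_{s/2}u$; this obviously satisfies $R_s(v)=u$, and the $L^p$ bound $\|v\|_{L^p}\le C\|u\|_{H^s_p}$ follows from writing $v=\bigl[P_{s/2}\circ(1-\Delta_{g_{\Sn}})^{-s/2}\bigr]\bigl[(1-\Delta_{g_{\Sn}})^{s/2}u\bigr]$ and noting that the bracketed operator is $A_s^{-1}$ (up to normalization), which you have already argued is $L^p$-bounded. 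The forward direction as you wrote it is fine. Beyond this, the verification of the Mikhlin–H\"ormander discrete-derivative conditions for the symbol $m_k$ is only asserted, not carried out; since you flagged this yourself as the main obstacle, I would simply insist that a complete write-up either quote a theorem that applies verbatim to Gamma-quotient multipliers, or include the short Stirling computation giving $|\Delta^j m_k|\lesssim k^{-j}$ for the finitely many $j$ the multiplier theorem actually requires (roughly $j\le \lfloor n/2\rfloor +1$), rather than the claim for all $j$.
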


\small

\bigskip

\noindent T. Jin

\noindent Department of Mathematics, Rutgers University\\
110 Frelinghuysen Road, Piscataway, NJ 08854, USA

\smallskip
\noindent\emph{Current address:}

\smallskip

\noindent Department of Mathematics, The University of Chicago\\
5734 S. University Avenue, Chicago, IL, 60637 USA\\[1mm]
Email: \textsf{tj@math.uchicago.edu}

\bigskip

\noindent Y.Y. Li

\noindent Department of Mathematics, Rutgers University\\
110 Frelinghuysen Road, Piscataway, NJ 08854, USA\\
Email: \textsf{yyli@math.rutgers.edu}

\bigskip

\noindent J. Xiong

\noindent School of Mathematical Sciences, Beijing Normal University\\
Beijing 100875, China

\smallskip
\noindent\emph{Current address:}

\smallskip

\noindent Beijing International Center for Mathematical Research, Peking University\\
Beijing 100871, China\\[1mm]
Email: \textsf{jxiong@math.pku.edu.cn}

\end{document}